%
%
\documentclass[11pt,a4paper]{amsart}
\pdfoutput=1
\usepackage{times}
\usepackage{amssymb}

%
%
\usepackage{ifthen}
\usepackage{cite}
\usepackage{graphicx}
\usepackage{xr}

\makeatletter
\newcommand{\ga}{\alpha}
\newcommand{\gb}{\beta}
\newcommand{\gd}{\delta}
\newcommand{\gep}{\epsilon}

\renewcommand{\gg}{\gamma}

\newcommand{\gl}{\lambda}

\newcommand{\gs}{\sigma}

\newcommand{\gD}{\Delta}

\newcommand{\gO}{\Omega}

\newcommand{\cA}{\mathcal{A}}
\newcommand{\cB}{\mathcal{B}}

\newcommand{\cD}{\mathcal{D}}

\newcommand{\cF}{\mathcal{F}}
\newcommand{\cG}{\mathcal{G}}

\newcommand{\cJ}{\mathcal{J}}

\newcommand{\cL}{\mathcal{L}}

\newcommand{\1}{1}

\newcommand{\C}{\mathbb{C}}

\newcommand{\N}{\mathbb{N}}

\newcommand{\R}{\mathbb{R}}

\newcommand{\p}{\partial}

\newtheorem{theorem}{Theorem}[section]

\newtheorem{lemma}[theorem]{Lemma}
\newtheorem{corollary}[theorem]{Corollary}

\theoremstyle{remark}

\newtheorem{remark}[theorem]{Remark}

\setlength{\marginparsep}{2em}

\newcommand{\erf}{\mathop{\operator@font erf}\nolimits}
\newcommand{\erfc}{\mathop{\operator@font erfc}\nolimits}
\newcommand{\sign}{\mathop{\operator@font sign}\nolimits}


\newif\if@golden  \@goldentrue
\newcommand{\f@ctor}{1}
\newlength{\aiv@width}  \setlength{\aiv@width}{210mm}
\newlength{\aiv@height} \setlength{\aiv@height}{297mm}
\newlength{\tmp@width}  \setlength{\tmp@width}{\aiv@width}
\newlength{\tmp@height} \setlength{\tmp@height}{\aiv@height}
\setlength{\columnsep}{14pt}
\if@twocolumn\if@golden\setlength{\textwidth}{33pc}
  \else\setlength{\textwidth}{36pc}\fi
  \addtolength{\textwidth}{\columnsep}\marginparsep5pt
\else\if@golden\ifcase\@ptsize\relax\setlength{\textwidth}{29pc}\or
  \setlength{\textwidth}{31pc}\or\setlength{\textwidth}{33pc}\fi
  \else\ifcase\@ptsize\relax\setlength{\textwidth}{31pc}\or
  \setlength{\textwidth}{33pc}\or\setlength{\textwidth}{35pc}\fi\fi
  \marginparsep8pt\fi
\addtolength{\tmp@width}{-\textwidth}
\if@twocolumn\if@golden\ifcase\@ptsize\relax\renewcommand{\f@ctor}{53}
  \or\renewcommand{\f@ctor}{46}\or\renewcommand{\f@ctor}{43}\fi
  \else\ifcase\@ptsize\relax\renewcommand{\f@ctor}{51}\or
  \renewcommand{\f@ctor}{45}\or\renewcommand{\f@ctor}{42}\fi\fi
\else\if@golden\ifcase\@ptsize\relax \renewcommand{\f@ctor}{46}
  \or\renewcommand{\f@ctor}{43}\or\renewcommand{\f@ctor}{43}\fi
  \else\ifcase\@ptsize\relax\renewcommand{\f@ctor}{43}
  \or\renewcommand{\f@ctor}{40}\or\renewcommand{\f@ctor}{40}\fi\fi\fi
\setlength{\textheight}{\the\baselineskip}
\multiply\textheight by \f@ctor
\addtolength{\textheight}{\topskip}
\addtolength{\aiv@height}{-\textheight}
\setlength{\oddsidemargin}{0.5\tmp@width}
\setlength{\evensidemargin}{\oddsidemargin}
\setlength{\topmargin}{0.375\aiv@height}
\addtolength{\topmargin}{-37pt}
\addtolength{\topmargin}{-1in}
\setlength{\marginparwidth}{\evensidemargin}
\addtolength{\marginparwidth}{-\marginparsep}
\addtolength{\marginparwidth}{-16pt}
\addtolength{\oddsidemargin}{-1in}
\addtolength{\evensidemargin}{-1in}
\parskip0pt plus2pt
%
%
\let\comp\circ

\newcommand{\op}{\ensuremath^\circ}

\newcommand{\cGo}{\ensuremath\cG\op}

\newcommand{\sgl}{\ensuremath\sqrt{2\gl}}
\newcommand{\sn}{\ensuremath\{1,2,\dotsc,n\}}
\newcommand{\da}{\ensuremath\downarrow}
\newcommand{\ua}{\ensuremath\uparrow}

\newcommand{\onel}[1][k]{\ensuremath1_{l_{#1}}}
\newcommand{\cond}{\ensuremath\,\big|\,}
\newcommand{\Co}{\ensuremath C_0(\cG)}

\newcommand{\Cii}{\ensuremath C_0^2(\cG)}

\newcommand{\kn}{\ensuremath k=1,\dotsc,n}
\newcommand{\Ieqref}[1]{\textup{\tagform@{I.\ref{I_#1}}}}
\newcommand{\Iref}[1]{I.\ref{I_#1}}
\newcommand{\IIeqref}[1]{\textup{\tagform@{II.\ref{II_#1}}}}

\newcommand{\pW}{\ensuremath p^w}
\newcommand{\rW}{\ensuremath r^w}
\newcommand{\RW}{\ensuremath R^w}
\newcommand{\SW}{\ensuremath S^w}
\newcommand{\TW}{\ensuremath H^w}

\newcommand{\cFW}{\ensuremath \cF^w}
\newcommand{\We}{\ensuremath W^e}
\newcommand{\ReW}{\ensuremath R^e}

\newcommand{\reW}{\ensuremath r^e}
\newcommand{\peW}{\ensuremath p^e}
\newcommand{\TeW}{\ensuremath H^e}
\newcommand{\SeW}{\ensuremath S^e}
\newcommand{\seW}{\ensuremath s^e}
\newcommand{\Ws}{\ensuremath W^s}
\newcommand{\RsW}{\ensuremath R^s}

\newcommand{\rsW}{\ensuremath r^s}
\newcommand{\psW}{\ensuremath p^s}
\newcommand{\TsW}{\ensuremath H^s}
\newcommand{\SsW}{\ensuremath S^s}

\newcommand{\cFsW}{\ensuremath \cF^s}
\newcommand{\Wg}{\ensuremath W^g}
\newcommand{\Rg}{\ensuremath R^g}

\newcommand{\rg}{\ensuremath r^g}
\newcommand{\pg}{\ensuremath p^g}

\newcommand{\Sg}{\ensuremath S^g}

\newcommand{\hgO}{\ensuremath{\hat\gO}}

\newcommand{\hP}{\ensuremath{\hat P}}
\newcommand{\hE}{\ensuremath{\hat E}}
\newcommand{\hR}{\ensuremath{\hat R}}
\newcommand{\hcA}{\ensuremath{\hat \cA}}

\newcommand{\hX}{\ensuremath{\hat X}}

%
%
\newlength{\BCs@ze}
\newlength{\BCsh@ft}
\ifcase\@ptsize\relax
    \setlength{\BCs@ze}{22pt}\setlength{\BCsh@ft}{-.4ex}
\or
    \setlength{\BCs@ze}{24pt}\setlength{\BCsh@ft}{-.5ex}
\or
    \setlength{\BCs@ze}{26pt}\setlength{\BCsh@ft}{-.5ex}
\fi
\DeclareFixedFont\MT{OMS}{cmsy}{m}{n}{\BCs@ze}    
\newcommand{\BigCart}{\ensuremath\mathop{\raisebox{\BCsh@ft}{{\MT\char"02}}}}

\def\pdftitle{\@gobble}
\let\setdif\setminus

\makeatother

\numberwithin{equation}{section}
\setcounter{tocdepth}{1}
\allowdisplaybreaks[2]

%
%

\externaldocument[0_]{xd_0}
\externaldocument[I_]{xd_I}

\date{December 6, 2010}

\title[Brownian Motions on Metric Graphs II]{%
Brownian Motions on Metric Graphs II\\
{\small Construction of Brownian Motions on Single Vertex Graphs}
}

\pdftitle{Brownian Motions on Metric Graphs II -
          Construction of Brownian Motions on Single Vertex Graphs}

\author[V.~Kostrykin]{Vadim Kostrykin}
\address{Vadim Kostrykin\newline
Institut f\"ur Mathematik\newline
Johannes Gutenberg--Universit\"at\newline
D--55099 Mainz, Germany}
\email{kostrykin@mathematik.uni-mainz.de}

\author[J.~Potthoff]{J\"urgen Potthoff}
\address{J\"urgen Potthoff\newline
Institut f\"ur Mathematik\newline
Universit\"at Mannheim\newline
D--68131 Mann\-heim, Germany}
\email{potthoff@math.uni-mannheim.de}

\author[R.~Schrader]{Robert Schrader}
\address{Robert Schrader\newline
Institut f\"{u}r Theoretische Physik\newline
Freie Universit\"{a}t Berlin, Arnimallee~14\newline
D--14195 Berlin, Germany}
\email{schrader@physik.fu-berlin.de}

\copyrightinfo{2010}{V.~Kostrykin, J.~Potthoff, R.~Schrader}
\subjclass[2010]{60J65,60J45,60H99,58J65,35K05,05C99}
\keywords{Brownian motion, Feller Brownian motion, metric graphs}

\begin{document}
\begin{abstract}
Pathwise constructions of Brownian motions which satisfy all possible boundary
conditions at the vertex of single vertex graphs are given.
\end{abstract}

\maketitle
\thispagestyle{empty}

\section{Introduction and Preliminaries} \label{sect1}
This is the second in a series of three articles about the construction and the
basic properties of Brownian motions on metric graphs. In the first of these
articles~\cite{BMMG1}, Brownian motions on metric graphs have been defined, their
Feller property has been shown, and their generators have been determined, i.e., the
analogue of Feller's theorem for metric graphs has been proved. In the present
article, we construct all possible Brownian motions on single vertex graphs (cf.\
below). In the third article of this series~\cite{BMMG3}, this construction will be
extended to general metric graphs. In a companion article~\cite{BMMG0}, which serves
more as a background for this series and which can also be read as an introduction
to the topic, we revisit the classical cases~\cite{Fe54, Fe54a, Fe57, ItMc63,
ItMc74, Kn81} of Brownian motions on bounded intervals and on the semi-line $\R_+$.

For a general introduction to the subject of this article we refer the interested
reader to~\cite{BMMG1}, henceforth quoted as ``article~I''. We shall refer to
equations, definitions, theorems etc.\ from article~I by placing an ``I'' in front.
For example, ``formula~\Ieqref{eq_2_4}'' refers to formula~(\ref{I_eq_2_4}) in
article~I, while ``definition~\Iref{def_3_1}'' points to definition~\ref{I_def_3_1} in
article~I. Unless otherwise mentioned, we continue to use the notation and the
conventions set up in that article.

In the present article we solely consider a metric graph $\cG$ consisting of a single
vertex $v$, and $n$, $n<+\infty$,  external edges labeled $l_k$, $k\in\sn$, each of
which is metrically isomorphic to the interval $[0,+\infty)$, the vertex corresponding
to $0$. It will be convenient to denote the local coordinate of a point $\xi$ in
$\cGo=\cG\setdif\{v\}$ by $(k,x)$, $k\in\sn$, $x\in(0,+\infty)$, instead of
$(l_k,x)$ as we did in paper~I.

The main ideas for the construction of Brownian motions with boundary
conditions at the vertex compatible with Feller's theorem~\ref{I_thm_5_3} are those
which can be found in the work by It\^o and McKean~\cite{ItMc63, ItMc74} for the
case of the semi-line $\R_+$, cf.\ also~\cite[Chapter~6]{Kn81}: The reflecting
Brownian motion in the case of $\R_+$ is replaced by a Walsh process~{\cite{Wa78}}
(cf.\ also, e.g., \cite{BaPi89}) on the single vertex graph, and then the killing and
slowing down of this process on the scale of its local time at the vertex is used to
construct processes implementing the various forms of the Wentzell boundary
condition. On the other hand, we provide a number of arguments which --- at least on
a technical level --- are rather different from those found in the standard
literature. For example, whenever possible, we use arguments based on Dynkin's formula
to derive the domain of the generator (i.e., the boundary conditions). This approach
appears to be much simpler and more intuitive than the one with standard
arguments~\cite{ItMc63, ItMc74, Kn81} for the semi-line $\R_+$, which is based on
the rather tricky calculation of heat kernels with the help of L\'evy's theorem.
Moreover, for the case of killing, instead of using the standard first passage time
formula for the hitting time of the vertex we use a first passage time formula for
the life time of the process. In the opinion of the authors this leads to much
simpler computations of the transition kernels than those in~\cite{ItMc63, ItMc74,
Kn81} for a Brownian motion on $\R_+$.

The article is organized as follows. In several subsections of the present section
we set up some notation and discuss some preparatory results. In section~\ref{sect2}
we recall the construction of a Walsh process on a metric graph. In
section~\ref{sect3} we construct a Walsh process on the single vertex graph with an
elastic boundary condition at the vertex, while in section~\ref{sect4} we construct
a Walsh process with a sticky boundary condition at the vertex. The most general
Brownian motion on the single vertex graph is obtained in section~\ref{sect5} by
combining these two constructions. In all three cases we also derive explicit expressions
of the analogues of the quantum mechanical scattering matrix on single vertex
graphs.

\subsection{Feller's Theorem and Boundary Conditions}   \label{ssect1.1}
For ease of later reference, in this subsection we state Feller's theorem,
theorem~\Iref{thm_5_3}, as it reads for a single vertex graph with $n$ edges.

$\Co$ denotes the space of continuous functions on $\cG$ which
vanish at infinity, and $\Cii$ the subspace of $\Co$ consisting of those
functions $f\in\Co$ which are twice continuously differentiable on $\cGo$, such that
$f'$ vanishes at infinity and $f''$ belongs to $\Co$. Moreover, for a function $f$
which is continuous on $\cGo$, and such that for $k\in\sn$, $f(\xi)$ has a limit when
$\xi\in\cGo$ approaches the vertex $v$ along any edge $l_k$, we set
\begin{equation*}
    f(v_k) = \lim_{\xi\to v,\,\xi\in l_k} f(\xi).
\end{equation*}
It is not hard to see that for all $f\in\Cii$, $k\in\sn$, the limits $f'(v_k)$ of
the derivatives exist. Theorem~\Iref{thm_5_3} states that the generator $A$ (on
$\Co$) of a Brownian motion on $\cG$ is given by $1/2$ times the Laplacean with
domain $\cD(A)\subset\Cii$, and that there exist non-negative constants $a$, $c$,
$a\ne 1$, and a vector $b=(b_k,\,\kn)\in [0,1]^n$, with
\begin{subequations}    \label{eq1.1}
\begin{equation}    \label{eq1.1a}
    a + c + \sum_{k=1}^n b_k = 1,
\end{equation}
and $\cD(A)$ consists exactly of those $f$ for which
\begin{equation}    \label{eq1.1b}
    a f(v) + \frac{c}{2}\,f''(v) = \sum_{k=1}^n b_k f'(v_k)
\end{equation}
\end{subequations}
holds true.

\subsection{Standard Brownian Motion on the Real Line}   \label{ssect1.2}
The construction of Brownian motions on a single vertex graph with infinitesimal generator
whose domain consists of $f$'s which satisfy the
boundary conditions~\eqref{eq1.1} is quite similar to the construction carried out
for the half-line in~\cite{ItMc63}, \cite{ItMc74}, \cite{Kn81}. This in turn is
based on the properties of a standard Brownian motion on the real line, cf., e.g.,
\cite{Fe71, Hi80, IkWa89, KaSh91, ReYo91, Wi79}, and the works cited above. For the
convenience of the reader, and for later reference, we collect the pertinent
notions, tools and results here.

Let $(Q_x,\,x\in\R)$ denote a family of probability measures on a measurable space
$(\gO',\cA')$, and let $B=(B_t,\,t\in\R_+)$ denote a standard Brownian motion
defined on $(\gO',\cA')$ with $Q_x(B_0=x)=1$, $x\in\R$. It will be convenient
to assume throughout that $B$ has only continuous paths. Whenever it is notationally
convenient, we shall also write $B(t)$ for $B_t$, $t\ge 0$. Furthermore, we may
suppose that there is a shift operator $\theta: \R_+\times\gO \to\gO$, such that for
all $s$, $t\ge 0$, $B_s\comp\theta_t=B_{s+t}$.

We shall always understand the Brownian family $(B, (Q_x,\,x\in\R))$ to be equipped
with a filtration $\cJ=(\cJ_t,\,t\ge 0)$ which is right continuous and complete for
the family $(Q_x,\,x\in\R)$. (For example, $\cJ$ could be chosen as the usual
augmentation of the natural filtration of $B$ (e.g., \cite[Sect.~2.7]{KaSh91} or
\cite[Sect.'s~I.4, III.2]{ReYo91}).)

For any $A\subset\R$, we denote by $H^B_A$ the hitting time of $A$ by $B$,
\begin{equation}    \label{eq1.2}
    H^B_A = \inf\{t>0,\,B_t\in A\},
\end{equation}
and we note that for all $A$ belonging to the Borel $\gs$--algebra $\cB(\R)$ of
$\R$, $H^B_A$ is a stopping time with respect to $\cJ$ (e.g.,
\cite[Theorem~III.2.17]{ReYo91}). In the case where $A=\{x\}$, $x\in\R$, we also
simply write $H^B_x$ for $H^B_{\{x\}}$. We shall also denote these stopping times by
$H^B(A)$ and $H^B(x)$, respectively, whenever it is typographically more convenient.
The following particular cases deserve special attention. Let $x\in\R$. Then we have
(e.g., \cite[Sect.~1.7]{ItMc74}, \cite[Sect.~2.6]{KaSh91}, \cite[Sect.'s~II.3,
III.3]{ReYo91})
\begin{equation}    \label{eq1.3}
    Q_0(H^B_x\in dt) = Q_x(H^B_0\in dt) = \frac{|x|}{t}\,g(t,x)\,dt,\qquad t>0,
\end{equation}
where $g$ is the Gau{\ss}-kernel
\begin{equation}    \label{eq1.4}
    g(t,x) = \frac{1}{\sqrt{2\pi t}}\,e^{-x^2/2t},\qquad t>0,\,x\in\R,
\end{equation}
and
\begin{equation}    \label{eq1.5}
    E^Q_0\bigl(e^{-\gl H^B_x}\bigr)
             = E^Q_x\bigl(e^{-\gl H^B_0}\bigr) = e^{-\sgl |x|},\qquad \gl>0.
\end{equation}
Moreover, for $a<x<b$ the law of $H^B_{\{a,b\}}$ under $Q_x$ is well-known (e.g.,
\cite[Problem~6, Sect.~1.7]{ItMc74}), and its expectation is given by
\begin{equation}    \label{eq1.6}
    E^Q_x\bigl(H^B_{\{a,b\}}\bigr) = (x-a)(b-x).
\end{equation}

Denote by $L^B=(L^B_t,\,t\ge 0)$ the local time of $B$ at zero, where we choose the
normalization as in, e.g., \cite{ReYo91} (and which thus differs by a factor $2$  from
the one used in, e.g., \cite{IkWa89, KaSh91}): for $x\in\R$, $P_x$--a.s.\
\begin{equation}    \label{eq1.7}
    L^B_t = \lim_{\gep\da 0}\,\frac{1}{2\gep}\,
            \gl\bigl(\bigl\{s\le t,\,|B_s|\le \gep\bigr\}\bigr), \qquad t\ge 0,
\end{equation}
and here $\gl$ denotes the Lebesgue measure. Thus, in terms of its $\ga$--potential
(cf.~\cite[Theorem~V.3.13]{BlGe68}) we have
\begin{equation}    \label{eq1.7a}
    u^\ga_{L^B}(x)
        = E_x\Bigl(\int_0^\infty e^{-\ga t}\,dL^B_t\Bigr)
        = \frac{1}{\sqrt{2\ga}}\,e^{-\sqrt{2\ga}|x|},\qquad \ga>0,\,x\in\R,
\end{equation}
which provides an efficient way to compare the various normalizations of the local time
used in the literature. Slightly informally we can write
\begin{equation}    \label{eq1.8}
    L^B_t = \int_0^t \gd_0(B_s)\,ds,
\end{equation}
where $\gd_0$ is the Dirac distribution concentrated at $0$. $L^B$ is adapted to $\cJ$,
and non-decreasing. Moreover, for every $x\in\R$, $P_x$--a.s.\ the paths of $L^B$ are
continuous, and $L^B$ is additive in the sense that
\begin{equation}    \label{eq1.9}
    L^B_{t+s} = L^B_t + L^B_s\comp\theta_t,\qquad s,t\in\R_+.
\end{equation}
Similarly as above, we shall occasionally take the notational freedom to rewrite
$L^B_t$ as $L^B(t)$.

We will need the following well-known result (e.g., \cite[Section~2.2,
Problem~3]{ItMc74}):

\begin{lemma}   \label{lem1.1}
The joint law of $|B_t|$ and $L_t$, $t>0$,  under $Q_0$ is given by
\begin{equation}    \label{eq1.10}
    Q_0\bigl(|B_t|\in dx,\,L^B_t\in dy\bigr)
        = 2\,\frac{x+y}{\sqrt{2\pi t^3}}\,e^{-(x+y)^2/2t}\,dx\,dy,\qquad x,y\ge 0.
\end{equation}
\end{lemma}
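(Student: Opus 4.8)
The plan is to reduce the statement to the classical reflection principle by means of L\'evy's theorem, and then to carry out an elementary change of variables.

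First I would invoke L\'evy's theorem in the form found in \cite[Chapter~VI]{ReYo91}: writing $S_t=\sup_{0\le s\le t}B_s$ for the running maximum, the two-dimensional process $(S_t-B_t,\,S_t)$, $t\ge 0$, has, under $Q_0$, the same law as $(|B_t|,\,L^B_t)$, $t\ge 0$. Here it is essential that the normalization \eqref{eq1.7} of the local time is precisely the semimartingale (Tanaka) normalization for which L\'evy's identity holds with no extra multiplicative constant; this is exactly the normalization of \cite{ReYo91}, as noted after \eqref{eq1.7}, so that no spurious factor of $2$ is introduced. In particular, the joint law of $(|B_t|,L^B_t)$ coincides with that of $(S_t-B_t,S_t)$, and it suffices to determine the latter at a fixed time $t$.

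Second, I would recall the joint law of $(B_t,S_t)$ obtained from the reflection principle. For $a\ge 0$ and $b\le a$ one has
\begin{equation*}
  Q_0(S_t\ge a,\,B_t\le b)=Q_0(B_t\ge 2a-b)=\int_{2a-b}^\infty g(t,z)\,dz,
\end{equation*}
with $g$ the Gau{\ss}-kernel of \eqref{eq1.4}; differentiating once in $b$ and once in $a$ (i.e.\ forming $-\partial^2/\partial a\,\partial b$) gives the joint density
\begin{equation*}
  Q_0(B_t\in db,\,S_t\in da)=\frac{2(2a-b)}{\sqrt{2\pi t^3}}\,e^{-(2a-b)^2/2t}\,db\,da,
  \qquad a\ge 0,\ b\le a .
\end{equation*}
Then I would pass to the coordinates $x=a-b$ (the value of $S_t-B_t$, i.e.\ of $|B_t|$) and $y=a$ (the value of $S_t$, i.e.\ of $L^B_t$). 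Since $2a-b=x+y$, the Jacobian of $(b,a)\mapsto(x,y)$ has modulus $1$, and the region $\{a\ge 0,\ b\le a\}$ is carried onto the first quadrant $\{x\ge 0,\ y\ge 0\}$, substitution yields
\begin{equation*}
  Q_0(|B_t|\in dx,\,L^B_t\in dy)=\frac{2(x+y)}{\sqrt{2\pi t^3}}\,e^{-(x+y)^2/2t}\,dx\,dy,
  \qquad x,y\ge 0,
\end{equation*}
which is precisely \eqref{eq1.10}. As a consistency check, integrating out $y$ (resp.\ $x$) returns the folded-normal density $2\,g(t,x)$ (resp.\ $2\,g(t,y)$), reflecting the well-known facts $|B_t|\stackrel{d}{=}S_t$ and $L^B_t\stackrel{d}{=}S_t$.

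The only genuinely delicate point is the bookkeeping of the local-time normalization: one must verify that \eqref{eq1.7} is the version of the local time for which L\'evy's identity $(|B|,L^B)\stackrel{d}{=}(S-B,S)$ holds on the nose rather than up to a constant. Everything after that is the reflection principle followed by a linear change of variables. Should one wish to bypass L\'evy's theorem, one could instead start directly from the occupation-time definition \eqref{eq1.7} and compute the joint Laplace transform of $(|B_t|,L^B_t)$ via excursion theory or resolvent identities, but that route is considerably more laborious and I would not pursue it here.
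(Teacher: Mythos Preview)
Your proof is correct. The paper itself does not supply a proof of this lemma; it is stated as a well-known result with a pointer to \cite[Section~2.2, Problem~3]{ItMc74}. The route you take---L\'evy's identity $(|B|,L^B)\stackrel{d}{=}(S-B,S)$ together with the reflection-principle density of $(B_t,S_t)$ and a linear change of variables---is the standard derivation, and your attention to the normalization~\eqref{eq1.7} (the one used in \cite{ReYo91}, for which L\'evy's identity holds without an extra factor) is exactly the point that needs checking.
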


Let $K^B = (K^B_r,\,r\ge 0)$ denote the right continuous pseudo-inverse of $L$,
\begin{equation}    \label{eq1.11}
    K^B_r = \inf\bigl\{t\ge 0,\,L^B_t>r\bigr\},\qquad r\ge 0.
\end{equation}
Note that due to the a.s.\ continuity of $L^B$ we have a.s.\ $L^B_{K_r} = r$. In
appendix~\ref{0_app_LT} of~\cite{BMMG0} we prove the following

\begin{lemma}   \label{lem1.2}
For any $r\ge 0$
\begin{equation}    \label{eq1.12}
    Q_0\bigl(K^B_r\in dt\bigr) = \frac{r}{t}\,g(t,r)\,dt,\qquad t>0,
\end{equation}
and
\begin{equation}    \label{eq1.13}
    E^Q_0\bigl(e^{-\gl K^B_r}\bigr) = e^{-\sgl r},\qquad \gl>0
\end{equation}
holds.
\end{lemma}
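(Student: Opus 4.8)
The plan is to derive both identities from the joint law of Lemma~\ref{lem1.1}, exploiting the a.s.\ continuity and monotonicity of the local time. First I would integrate out the spatial variable in~\eqref{eq1.10}: the substitution $u=x+y$ gives
\[
    Q_0\bigl(L^B_t\in dy\bigr)
        = \int_0^\infty 2\,\frac{x+y}{\sqrt{2\pi t^3}}\,e^{-(x+y)^2/2t}\,dx\;dy
        = 2\,g(t,y)\,dy,\qquad y\ge 0,
\]
so that $L^B_t$ under $Q_0$ has the same law as $|B_t|$ (this is L\'evy's identity, but here it drops out of Lemma~\ref{lem1.1} directly). In particular $L^B_t$ has no atoms for $t>0$.

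Next I would set up the duality between $K^B$ and $L^B$. Because $L^B$ is non-decreasing and $Q_0$--a.s.\ continuous, for every $t>0$ one has the a.s.\ identity of events $\{K^B_r<t\}=\{L^B_t>r\}$: if $L^B_s>r$ for some $s<t$, then monotonicity forces $L^B_t>r$, while conversely $L^B_t>r$ together with continuity yields some $s<t$ with $L^B_s>r$, hence $K^B_r<t$. Since $L^B_t\to\infty$ as $t\to\infty$ (recurrence of $B$ at $0$), $K^B_r$ is a.s.\ finite, and combining the duality with the marginal just computed yields the distribution function
\[
    Q_0\bigl(K^B_r<t\bigr) = Q_0\bigl(L^B_t>r\bigr) = \int_r^\infty 2\,g(t,y)\,dy,\qquad t>0.
\]

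To extract the density~\eqref{eq1.12} I would differentiate this in $t$. Using the heat equation $\p_t g(t,y)=\tfrac12\,\p_y^2 g(t,y)$ and the identity $\p_y g(t,y)=-\tfrac{y}{t}\,g(t,y)$, differentiation under the integral sign gives
\[
    \frac{d}{dt}\,Q_0\bigl(K^B_r<t\bigr)
        = \int_r^\infty \p_y^2 g(t,y)\,dy
        = -\p_y g(t,y)\big|_{y=r}
        = \frac{r}{t}\,g(t,r),
\]
the boundary term at $y=\infty$ vanishing. This is exactly~\eqref{eq1.12}.

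Finally, the right-hand side coincides with the hitting-time density $Q_0(H^B_r\in dt)$ recorded in~\eqref{eq1.3}; hence $K^B_r$ and $H^B_r$ have the same law, and the Laplace transform~\eqref{eq1.13} follows at once from~\eqref{eq1.5} (alternatively one integrates $\int_0^\infty e^{-\gl t}\,\tfrac{r}{t}\,g(t,r)\,dt$ directly). I expect the only genuinely delicate points to be the justification of the event identity $\{K^B_r<t\}=\{L^B_t>r\}$ and of the differentiation under the integral sign; both rest on the a.s.\ continuity of $L^B$, and the latter is where I would take the most care.
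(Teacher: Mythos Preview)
Your argument is correct. The marginal computation from Lemma~\ref{lem1.1} does give $Q_0(L^B_t\in dy)=2g(t,y)\,dy$, the event identity $\{K^B_r<t\}=\{L^B_t>r\}$ holds for the reasons you state (and is recorded in the paper as~\eqref{eq_set_K}), and the differentiation step via the heat equation is clean. The identification of the resulting density with~\eqref{eq1.3} then makes~\eqref{eq1.13} immediate from~\eqref{eq1.5}.

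As for comparison: the present paper does not actually prove Lemma~\ref{lem1.2} here; it defers the argument to appendix~\ref{0_app_LT} of the companion article~\cite{BMMG0}, so there is no in-text proof to set yours against. What can be said is that your route is the classical one through L\'evy's identity $L^B_t\overset{d}{=}|B_t|$ (which you extract directly from Lemma~\ref{lem1.1}), and the paper itself remarks in the introduction that the standard treatments~\cite{ItMc63, ItMc74, Kn81} proceed via L\'evy's theorem. So your approach is in the spirit of the literature the authors cite; whether the companion paper argues the same way or, say, computes the Laplace transform~\eqref{eq1.13} first via excursion theory or the strong Markov property and then inverts, cannot be determined from this text alone. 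Either way your derivation is self-contained and sound; the only cosmetic point is that you could skip the heat-equation trick by noting $\int_r^\infty 2g(t,y)\,dy=Q_0(|B_t|>r)=Q_0(H^B_r\le t)$ directly from the reflection principle, which gives the density~\eqref{eq1.3} without differentiation.
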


Moreover, we shall make use of the following lemma, which is similar to results
in Section~6.4 of~\cite{KaSh91}, and which is proved in appendix~\ref{0_app_LT}
of~\cite{BMMG0}.

\begin{lemma}   \label{lem1.3}
Under $Q_0$, $L^B\bigl(H^B_{\{-x,+x\}}\bigr)$, $x>0$, is exponentially distributed
with mean~$x$.
\end{lemma}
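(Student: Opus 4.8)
The plan is to reduce the statement to an explicit Laplace transform and to evaluate that transform by exhibiting an exponential martingale. First I note that $H^B_{\{-x,x\}}=\inf\{t>0:\,|B_t|=x\}$, so what has to be shown is that under $Q_0$ the variable $L^B_{H^B_{\{-x,x\}}}$ has Laplace transform
\[
  E^Q_0\bigl(e^{-\mu L^B_{H^B_{\{-x,x\}}}}\bigr)=\frac{1}{1+\mu x},\qquad \mu>0.
\]
Since the right-hand side is exactly the Laplace transform of the exponential law with mean $x$, the assertion then follows from the uniqueness theorem for Laplace transforms.

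To obtain this transform I would fix $\mu>0$ and introduce the process
\[
  M_t=(1+\mu|B_t|)\,e^{-\mu L^B_t},\qquad t\ge 0,
\]
and claim it is a local martingale. The computation rests on Tanaka's formula $d|B_t|=\sgn(B_t)\,dB_t+dL^B_t$, which holds without an extra factor precisely for the normalization~\eqref{eq1.7}, together with the product rule, $e^{-\mu L^B_t}$ being of finite variation (so there is no covariation term). The two contributions proportional to $dL^B_t$ are $-\mu(1+\mu|B_t|)\,dL^B_t$, coming from differentiating $e^{-\mu L^B_t}$, and $+\mu\,e^{-\mu L^B_t}\,dL^B_t$, coming from the local-time term in Tanaka's formula; because $dL^B_t$ is carried by $\{B_t=0\}$, where $|B_t|=0$, these cancel exactly. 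What remains is $dM_t=\mu\,e^{-\mu L^B_t}\sgn(B_t)\,dB_t$, a stochastic integral against $B$, hence $M$ is a local martingale.

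Now set $T=H^B_{\{-x,x\}}$. On $[0,T]$ one has $|B_t|\le x$ and $L^B_t\ge 0$, so $0\le M_{t\wedge T}\le 1+\mu x$; thus the stopped process $M^T$ is bounded, hence a uniformly integrable martingale. By recurrence of $B$ one has $T<\infty$ $Q_0$--a.s.\ and $|B_T|=x$, so optional stopping yields $1=M_0=E^Q_0(M_T)=(1+\mu x)\,E^Q_0(e^{-\mu L^B_T})$, which is the identity above. The only points that really need care are the Itô--Tanaka computation establishing the cancellation of the $dL^B$--terms at the kink of $y\mapsto|y|$, and the (routine) boundedness check that legitimizes optional stopping; the rest is bookkeeping. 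As a consistency check and an alternative route, one may instead invoke L\'evy's theorem — under which $(|B|,L^B)$ has the law of $(S-B,S)$, with $S$ the running maximum of $B$ — and use the strong Markov property at the successive times the maximum reaches new levels to show directly that $L^B_T$ is memoryless, fixing its mean to be $x$ via $E^Q_0(L^B_T)=E^Q_0(|B_T|)=x$, which follows from Tanaka's formula, optional stopping for $\int_0^{\cdot}\sgn(B_s)\,dB_s$, and $E^Q_0(T)=x^2$ from~\eqref{eq1.6}.
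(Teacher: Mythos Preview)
Your proof is correct, and it is essentially the same approach the paper points to: the paper defers the argument to the appendix of the companion article~\cite{BMMG0} and explicitly says there that it follows ``the method in~\cite[Section~6.4]{KaSh91}'', which is precisely your Tanaka-based martingale $M_t=(1+\mu|B_t|)e^{-\mu L^B_t}$ followed by optional stopping at $H^B_{\{-x,x\}}$. The alternative route via L\'evy's identity that you sketch at the end is a genuine second proof, but it is not what the paper uses.
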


\subsection{First Passage Time Formula for Single Vertex Graphs} \label{ssect1.3}
In this subsection we set up some additional notation which will be used throughout
this article. Also we record a special form of the first passage time formula,
equation~\Ieqref{eq_2_6}.

Let $X$ be a Brownian motion on $\cG$ in the sense of definition~\Iref{def_3_1}
defined on a family $\bigl(\gO,\cA, \cF=(\cF_t,\,t\ge 0), (P_\xi,\,\xi\in\cG)\bigr)$
of filtered probability spaces. Let $H_v$ be the hitting time of the vertex $v$.
Note that for all $\xi\in\cG$, $P_\xi(H_v<+\infty)=1$ (cf.\ the discussion in
section~\Iref{sect_3}). For $\gl>0$, set
\begin{equation}    \label{eq1.14}
    e_\gl(\xi) = E_\xi\bigl(\exp(-\gl H_v)\bigr) = e^{-\sgl d(\xi,v)},\qquad \xi\in\cG,
\end{equation}
where $E_\xi(\,\cdot\,)$ denotes expectation with respect to $P_\xi$. The last
equality follows from formula~\eqref{eq1.5}.

Recall that we denote the natural metric on $\cG$ by $d$. We introduce another
symmetric map $d_v$ from $\cG\times\cG$ to $\R_+$ defined by
\begin{equation}    \label{eq1.15}
    d_v(\xi,\eta) = d(\xi,v) + d(v,\eta),\qquad \xi,\,\eta\in\cG,
\end{equation}
which is the ``distance from $\xi$ to $\eta$ via the vertex $v$''. Observe that if
$\xi$, $\eta\in\cG$ do not belong to the same edge, then by the definition of
$d$ the equality $d_v(\xi,\eta)=d(\xi,\eta)$ holds true.

Next we define two heat kernels on $\cG$ by
\begin{align}
    p(t,\xi,\eta)   &= \sum_{k=1}^n \onel(\xi)\, g\bigl(t,d(\xi,\eta)\bigr)\,\onel(\eta),
                    \label{eq1.16}\\
    p_v(t,\xi,\eta) &= \sum_{k=1}^n \onel(\xi)\, g\bigl(t,d_v(\xi,\eta)\bigr)\,\onel(\eta),
                    \label{eq1.17}
\end{align}
with $t>0$, $\xi$, $\eta\in\cG$. $g$ is the Gau{\ss}-kernel defined in
equation~\eqref{eq1.4}. Hence, in local coordinates $\xi=(k,x)$, $\eta=(m,y)$, $x$, $y\ge
0$, $k$, $m\in\sn$, these kernels read
\begin{align}
    p\bigl(t,(k,x),(m,y)\bigr)
        &= \frac{1}{\sqrt{2\pi t}}\,e^{-(x-y)^2/2t}\,\gd_{km}\label{eq1.18}\\
    p_v\bigl(t,(k,x),(m,y)\bigr)
        &= \frac{1}{\sqrt{2\pi t}}\,e^{-(x+y)^2/2t}\,\gd_{km}\label{eq1.19}.
\end{align}
The \emph{Dirichlet heat kernel} $p^D$ on $\cG$ is then given by
\begin{equation}    \label{eq1.20}
    p^D(t,\xi,\eta) = p(t,\xi,\eta) - p_v(t,\xi,\eta),\qquad t>0,\,\xi,\,\eta\in\cG.
\end{equation}
It is the transition density of a strong Markov process with state space
$\cG\op\cup\{\gD\}$ which on every edge of $\cG\op$ is equivalent to a Brownian
motion until the moment of reaching the vertex when it is
killed, and $\gD$ denotes a cemetery state adjoined to $\cG$ as an isolated point.
(Recall from section~\Iref{sect_3} of article~I that this process is \emph{not}
a Brownian motion on $\cG$ in the sense of definition~\Iref{def_3_1}.)

The \emph{Dirichlet resolvent} $R^D = (R^D_\gl,\,\gl>0)$ on $\cG$ is defined by
\begin{equation}    \label{eq1.21}
    R^D_\gl f(\xi)
        = E_\xi\Bigl(\int_0^{H_v} e^{-\gl t} f(X_t)\,dt\Bigr),\qquad
            \gl>0,\,\xi\in\cG,\,f\in B(\cG).
\end{equation}
It is easy to see that $R^D_\gl$ has the following integral kernel on $\cG$
\begin{equation}    \label{eq1.22}
    r^D_\gl(\xi,\eta) = r_\gl(\xi,\eta) - r_{v,\gl}(\xi,\eta),\qquad \xi,\,\eta\in\cG,
\end{equation}
where for $\xi$, $\eta\in\cG$,
\begin{equation}    \label{eq1.22a}
    r_\gl(\xi,\eta) = \sum_{k=1}^n \onel(\xi)\,\frac{e^{-\sgl\,d(\xi,\eta)}}{\sgl}\,\onel(\eta),
\end{equation}
and
\begin{equation}    \label{eq1.22b}
\begin{split}
    r_{v,\gl}(\xi,\eta)
        &= \sum_{k=1}^n \onel(\xi)\, \frac{e^{-\sgl\, d_v(\xi,\eta)}}{\sgl}\,\onel(\eta)\\[1ex]
        &= \sum_{k=1}^n \onel(\xi)\, \frac{1}{\sgl}\,e_\gl(\xi)\, e_\gl(\eta)\,\onel(\eta).
\end{split}
\end{equation}
In particular, $r^D_\gl$ is the Laplace transform of the Dirichlet heat
kernel~\eqref{eq1.20} at $\gl>0$.

\vspace{.5\baselineskip}
Recall the first passage time formula~\Ieqref{eq_2_6}:
\begin{equation*}
    R_\gl f(\xi) = E_\xi\Bigl(\int_0^S e^{-\gl t} f(X_t)\,dt\Bigr)
                    + E_\xi\bigl(e^{-\gl S}\,R_\gl f(X_S)\bigr),
\end{equation*}
where $S$ is any $P_\xi$--a.s.\ finite stopping time relative to $\cJ$.
The choice $S=H_v$ gives the following result.

\begin{lemma}   \label{lem1.4}
Let $X$ be a Brownian motion on $\cG$ with resolvent $R=(R_\gl,\,\gl>0)$. Then
for all $\gl>0$, $\xi\in\cG$, $f\in B(\cG)$,
\begin{equation}    \label{eq1.23}
    R_\gl f(\xi) = R^D_\gl f(\xi) + e_\gl(\xi)\,R_\gl f(v)
\end{equation}
holds true.
\end{lemma}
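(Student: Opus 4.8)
The plan is to specialize the first passage time formula recalled immediately above, equation~\Ieqref{eq_2_6}, to the stopping time $S = H_v$, the hitting time of the vertex. The hypothesis of that formula requires $S$ to be a $P_\xi$--a.s.\ finite stopping time relative to $\cJ$, and this is exactly what the preceding discussion guarantees for $H_v$, since $P_\xi(H_v < +\infty) = 1$ for every $\xi\in\cG$. So the formula applies verbatim with this choice.

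With $S = H_v$ the formula reads
\begin{equation*}
    R_\gl f(\xi) = E_\xi\Bigl(\int_0^{H_v} e^{-\gl t} f(X_t)\,dt\Bigr)
                    + E_\xi\bigl(e^{-\gl H_v}\,R_\gl f(X_{H_v})\bigr).
\end{equation*}
First I would observe that the initial summand is, by the very definition~\eqref{eq1.21} of the Dirichlet resolvent, precisely $R^D_\gl f(\xi)$; here the upper limit of integration is $H_v$ in both expressions, so the match is immediate. It then remains only to identify the second summand with $e_\gl(\xi)\,R_\gl f(v)$. For this, I would argue that on the full--measure event $\{H_v < +\infty\}$ the pathwise continuity of $X$ forces $X_{H_v} = v$, so that $R_\gl f(X_{H_v}) = R_\gl f(v)$ is a deterministic constant and may be pulled out of the expectation. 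What is left is $R_\gl f(v)\,E_\xi\bigl(e^{-\gl H_v}\bigr)$, and by the definition~\eqref{eq1.14} of $e_\gl$ the expectation equals $e_\gl(\xi)$. Combining the two summands yields~\eqref{eq1.23}.

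Since the lemma is essentially a direct reading of the general first passage time formula at $S = H_v$, there is no substantial obstacle. The one point warranting a sentence of care is the claim $X_{H_v} = v$ on $\{H_v < +\infty\}$: this rests on the continuity of the paths of the Brownian motion on $\cG$ together with the fact that $\{v\}$ is a closed (indeed single--point) subset, so that the infimum defining the hitting time is actually attained at a point equal to $v$. Once this is granted, the factoring out of the constant $R_\gl f(v)$ and the substitution of $e_\gl(\xi)$ are routine.
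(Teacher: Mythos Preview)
Your proof is correct and follows exactly the approach of the paper: specialize the first passage time formula~\Ieqref{eq_2_6} to $S=H_v$, identify the first integral as $R^D_\gl f(\xi)$ by definition~\eqref{eq1.21}, and use $X_{H_v}=v$ together with~\eqref{eq1.14} for the second term. The paper itself states the lemma as an immediate consequence of this choice without spelling out the details you provide.
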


The following notation will be convenient. For real valued measurable functions
$f$, $g$ on $\cG$, with restrictions $f_k$, $g_k$, $k\in\sn$, to the edges
$l_k$ we set
\begin{equation*}
    (f,g) = \int_{\cG} f(\xi)\,g(\xi)\,d\xi = \sum_{k=1}^n (f_k,g_k),
\end{equation*}
where the integration is with respect to the Lebesgue measure on $\cG$, and
\begin{equation*}
    (f_k,g_k) = \int_0^\infty f_k(x)\,g_k(x)\,dx,
\end{equation*}
whenever the integrals exist.

Assume that $f\in\Co$. Then for $\gl>0$, $R_\gl f$ belongs to the domain of the
ge\-ne\-ra\-tor of $X$, and therefore to $\Cii$ (cf.\ subsection~\ref{ssect1.1}).
It is straightforward to compute the derivative of the right hand side of
formula~\eqref{eq1.23}, and we obtain the

\begin{corollary}   \label{cor1.5}
For every Brownian motion $X$ on $\cG$ with resolvent $R=(R_\gl,\,\gl>0)$,
and all $f\in\Co$,
\begin{equation}    \label{eq1.24}
    (R_\gl f)'(v_k) = 2(e_{\gl,k},f_k) - \sgl R_\gl f(v),\qquad k\in\sn,
\end{equation}
holds true.
\end{corollary}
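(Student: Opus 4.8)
The plan is to differentiate the right-hand side of~\eqref{eq1.23} term by term along the edge $l_k$ and then let $\xi$ approach the vertex. Because $f\in\Co$, Feller's theorem guarantees $R_\gl f\in\Cii$, so the one-sided limits of the derivatives at $v$ exist and the manipulations below are meaningful. The second summand $e_\gl(\xi)\,R_\gl f(v)$ is the easy one: $R_\gl f(v)$ is a constant, and by~\eqref{eq1.14} the function $e_\gl$ in the local coordinate $\xi=(k,x)$ on $l_k$ is just $e^{-\sgl x}$, whose $x$-derivative at $x=0$ equals $-\sgl$. This summand therefore contributes $-\sgl\,R_\gl f(v)$ to $(R_\gl f)'(v_k)$, which already accounts for the second term on the right of~\eqref{eq1.24}.

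The substance is the computation of $(R^D_\gl f)'(v_k)$. First I would write $R^D_\gl f$ as integration against the kernel $r^D_\gl=r_\gl-r_{v,\gl}$ of~\eqref{eq1.22}. By~\eqref{eq1.22a} and~\eqref{eq1.22b} both kernels are diagonal in the edge index, so in local coordinates only the restriction $f_k$ enters and
\begin{equation*}
    R^D_\gl f\bigl((k,x)\bigr)
      = \frac{1}{\sgl}\int_0^\infty
          \bigl(e^{-\sgl\,|x-y|}-e^{-\sgl\,(x+y)}\bigr)\,f_k(y)\,dy .
\end{equation*}
This is precisely the half-line Dirichlet resolvent kernel applied to $f_k$, which is exactly why the single-vertex problem collapses to a one-dimensional one.

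Next I would differentiate in $x$. To deal with the absolute value I would split the first term as $\int_0^x e^{-\sgl(x-y)}f_k(y)\,dy+\int_x^\infty e^{-\sgl(y-x)}f_k(y)\,dy$ and apply the Leibniz rule to each variable-limit integral. The crucial observation, and the heart of the calculation, is that the two boundary terms coming from differentiating the limits of integration are $+f_k(x)$ and $-f_k(x)$ and hence cancel; what remains is
\begin{equation*}
    \frac{\p}{\p x}R^D_\gl f\bigl((k,x)\bigr)
      = -\int_0^x e^{-\sgl(x-y)}f_k(y)\,dy
        +\int_x^\infty e^{-\sgl(y-x)}f_k(y)\,dy
        +\int_0^\infty e^{-\sgl(x+y)}f_k(y)\,dy .
\end{equation*}
Letting $x\da 0$, the first integral vanishes while the remaining two each converge to $\int_0^\infty e^{-\sgl y}f_k(y)\,dy$; since $e_\gl$ restricted to $l_k$ is $e_{\gl,k}(y)=e^{-\sgl y}$, this gives $(R^D_\gl f)'(v_k)=2\,(e_{\gl,k},f_k)$. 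Combining this with the $-\sgl\,R_\gl f(v)$ contribution from the first paragraph yields~\eqref{eq1.24}.

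I expect the only real obstacle to be bookkeeping rather than conceptual: one must justify differentiating under the integral sign and interchanging the limit $x\da 0$ with the integration. Both are routine, because $f\in\Co$ makes $f_k$ bounded and continuous and the kernels decay exponentially in $y$, so a dominated-convergence argument works uniformly for small $x$. The genuine content is simply the cancellation of the boundary terms $\pm f_k(x)$, which is what makes the resulting boundary expression in~\eqref{eq1.24} so clean.
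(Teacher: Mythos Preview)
Your proposal is correct and follows exactly the route the paper indicates: the paper merely states that it is ``straightforward to compute the derivative of the right hand side of formula~\eqref{eq1.23}'', and you have supplied precisely that computation. One tiny cosmetic point: because of the prefactor $1/\sgl$ in the Dirichlet kernel, the boundary terms from the Leibniz rule are $\pm f_k(x)/\sgl$ rather than $\pm f_k(x)$, but of course they still cancel and your displayed derivative formula is correct.
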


\subsection{\boldmath The Case $b=0$}     \label{ssect1.4}
The case, where all parameters $b_k$, $\kn$, in equation~\eqref{eq1.1} vanish, is
trivial in the sense that the associated Brownian motion can be constructed by a
stochastic process living only on the edge where it started, and therefore it is
just a classical Brownian motion on $\R_+$ in the sense of~\cite[Section~6.1]{Kn81}.
This case is also discussed briefly in~\cite{Kn81}, but for the sake of completeness
we include it here in somewhat more detail than in~\cite{Kn81}.

Consider a standard Brownian motion on $\R$ as before, and without loss of
generality assume in addition that the underlying sample space is large enough such
that all constant paths in $\R$ can be realized as paths of the Brownian motion.
Construct from  the Brownian motion a new process by stopping it when it reaches the
origin of $\R$, and then kill it after an exponential holding time (independent of
the Brownian motion) with rate $\gb\ge 0$. We shall only consider starting points
$x\in\R_+$. If $\gb=0$, then the process is simply a Brownian motion with absorption
at the origin. For example, it follows from Theorem~10.1 and Theorem~10.2
in~\cite{Dy65a} that for every $\gb\ge 0$ this process is a strong Markov process,
and obviously it has the path properties which make it a Brownian motion on $\R_+$
in the sense of~\cite[Section~6.1]{Kn81}. Thus, if $\xi\in\cG$, $\xi\in l_k$, $\kn$,
then we just have to map this process with the isomorphisms between the edges $l_k$,
$\kn$, and the interval $[0,+\infty)$ into $\cG$ to obtain a Brownian motion on
$\cG$ with start in $\xi$, such that it is stopped when reaching the vertex, and
then is killed there after an exponential holding time with rate $\gb\ge 0$.

Let $U^0=(U^0_t,\,t\ge 0)$ denote the semigroup associated with this process. It is
obvious that for $f\in \Co$ we get $U^0_t f(v) = \exp(-\gb t) f(v)$, $t\ge 0$. Thus
for the corresponding resolvent $R^0=(R^0_\gl,\,\gl>0)$, and $f\in\Co$ one finds
\begin{equation}    \label{eq1.25}
    \gl R^0_\gl f(v) - f(v) + \gb R^0_\gl f(v) = 0,\qquad \gl>0.
\end{equation}
Let $A^0$ be the generator of this process, and recall from lemma~\Iref{lem_5_2},
that for all $f\in\cD(A^0)$, $A^0 f(\xi) = 1/2\, f''(\xi)$, $\xi\in\cG$. But then the
identity $\gl R^0_\gl = A^0 R^0_\gl + \text{id}$ implies the following formula
\begin{equation*}
    \frac{1}{2}\,\bigl(R^0_\gl f\bigr)''(v) + \gb\,R^0_\gl f(v) = 0.
\end{equation*}
For every $\gl>0$ $R^0_\gl$ maps $\Co$ onto $\cD(A^0)$. With
the choice $a = (1+\gb)^{-1}\,\gb$, $c = (1+\gb)^{-1}$ this shows that the process
realizes the boundary conditions of equations~\eqref{eq1.1} with $b_k=0$, $\kn$.

Moreover, we can now use equation~\eqref{eq1.23} combined with formula~\eqref{eq1.25}, to
obtain the following explicit expression for the resolvent with $f\in\Co$, $\gl>0$:
\begin{equation}    \label{eq1.26}
    R^0_\gl f(\xi) = R^D_\gl f(\xi) + \frac{1}{\gb+\gl}\,e^{-\sgl d(\xi,v)}\,f(v),
                    \qquad \xi\in\cG,
\end{equation}
where, as before, $R^D_\gl$ is the Dirichlet resolvent.

In order to compute the heat kernel associated with this process on $\cG$, we invert
the Laplace transforms in equation~\eqref{eq1.26}. For the first term on the right
hand side this is trivial, and gives the Dirichlet heat kernel $p^D$, cf.\
equation~\eqref{eq1.20}. The second term could be handled by a formula which can be
found in the tables (e.g., \cite[eq.~(5.6.10)]{ErMa54a}). But this formula involves
the complementary error function $\erfc$ at complex arguments, and does not yield a
very intuitive expression. Instead, we can simply use the observation that
$t\mapsto \exp(-\gb t)$ is the inverse Laplace transform of $\gl\mapsto (\gb+\gl)^{-1}$.
Moreover, the well-known formula for the density of the hitting time of the origin
by a Brownian motion on the real line (e.g., \cite[p.~25]{ItMc74}, \cite[p.~96]{KaSh91},
\cite[p.~102]{ReYo91})
gives us the following expression for the density of the first hitting time of the vertex
\begin{equation}	\label{DFT}
	P_\xi(H_v\in ds) = \frac{d(\xi,v)}{\sqrt{2\pi s^3}}\,e^{-d(\xi,v)^2/2s}\,ds,\qquad s\ge 0.
\end{equation}
Using the well-known Laplace transform (e.g., \cite[eq.~4.5.28]{ErMa54a})
\begin{equation}    \label{LT}
    \int_0^\infty e^{-\gl s}\,\frac{a}{\sqrt{2\pi s^3}}\,e^{-a^2/2s}\,ds
        = e^{-\sgl a},\qquad a>0,\,\gl>0,
\end{equation}
we infer that the inverse Laplace transform of the exponential in~\eqref{eq1.26}
is given by $P_\xi(H_v\in ds)$. Thus we obtain the following heat kernel
\begin{equation}    \label{eq1.27}
\begin{split}
    p^0(t,\xi,d\eta)
        &= p^D(t,\xi,\eta)\,d\eta - \Bigl(\int_0^t e^{-\gb(t-s)}
                        \,P_\xi(H_v\in ds)\Bigr)\,\gep_v(d\eta),\\[1ex]
        &= p^D(t,\xi,\eta)\,d\eta - \Bigl(\int_0^t e^{-\gb(t-s)}
                \,\frac{d(\xi,v)}{\sqrt{2\pi s^3}}\,e^{-d(\xi,v)^2/2s}\,ds\Bigr)\,\gep_v(d\eta),
\end{split}
\end{equation}
with $\xi$, $\eta\in\cG$, $t>0$, and $\gep_v$ is the Dirac measure at the vertex $v$.

\subsection{Killing via the Local Time at the Vertex}    \label{ssect1.5}
We recall from remark~\Iref{rem_3_2}, that we may and will consider every Brownian
motion $X$ on $\cG$ with respect to a filtration $\cF = (\cF_t,\,t\ge 0)$ which is
right continuous and complete relative to $(P_\xi,\,\xi\in\cG)$, and such that
$X$ is strongly Markovian with respect to $\cF$.

In this subsection we suppose that $X$ is a Brownian motion on the single vertex
graph $\cG$ with infinite lifetime, and such that the vertex is not absorbing. This
entails (cf.\ section~\Iref{sect_3}) that $X$ leaves the vertex immediately and
begins a standard Brownian excursion into one of the edges. Therefore we get in this
case for the hitting time $H_v$ of the vertex $P_v(H_v=0)=1$, i.e., $v$ is regular
for $\{v\}$ in the sense of~\cite{BlGe68}. Consequently $X$ has a local time
$L=(L_t,\,t\ge 0)$ at the vertex (e.g., \cite[Theorem~V.3.13]{BlGe68}). Without loss
of generality, we suppose throughout this subsection that $L$ is a \emph{perfect
continuous homogeneous additive functional (PCHAF)} of $X$ in the sense
of~\cite[Section~III.32]{Wi79}. That is, $L$ is a non-decreasing process, which is
adapted to $\cF$, and such that it is a.s.\ continuous, additive, i.e., $L_{t+s} =
L_t + L_s\comp \theta_t$, and for all $\xi\in\cG$, $P_\xi\bigl(L_0=0\bigr)=1$ holds
true. Moreover we may and will assume from now on that $X$ and $L$ are
\emph{pathwise} continuous.

Killing $X$ exponentially on the scale of $L$, we can construct a new Brownian
motion $\hX$ on $\cG$.  We shall do this using the method of \cite{Kn81, KaSh91}.

Let $K=(K_s,\,s\in\R_+)$ denote the right continuous pseudo-inverse of $L$:
\begin{equation}    \label{eq_def_K}
    K_s = \inf\{t\ge 0,\,L_t>s\},\qquad s\in\R_+,
\end{equation}
where --- as usual --- we make the convention that $\inf\emptyset=+\infty$. The
continuity of $L$ entails that for every $s\in\R_+$, $L_{K_s}=s$. Clearly, $K$ is
increasing, and due to its right continuity it is a measurable stochastic process.
Fix $s\in\R_+$. It is straightforward to check that for every $t\in\R_+$,
\begin{equation}    \label{eq_set_K}
    \{K_s < t\} = \{L_t > s\}.
\end{equation}
Because $L$ is adapted, the set on the right hand side belongs to $\cF_t$, and since
$\cF$ is right continuous, equality~\eqref{eq_set_K} shows that for every $s\in\R_+$,
$K_s$ is a stopping time relative to $\cF$. We remark that since $L$ only increases
when $X$ is at the vertex $v$, the continuity of $X$ implies that for every
$s\in\R_+$ we get $X(K_s) = v$ on $\{K_s<+\infty\}$. On the other hand, we shall
argue below that $L$ a.s.\ increases to $+\infty$, so that we get $X(K_s)=v$
a.s.\ for all $s\in\R_+$.

Let $\gb>0$. Bring in the additional probability space $(\R_+, \cB(\R_+), P_\gb)$,
where $P_\gb$ is the exponential law with rate $\gb$. Let $S$ denote the
associated coordinate random variable $S(s) = s$, $s\in\R_+$. Define
\begin{align*}
    \hgO &= \gO\times\R_+,\\
    \hcA &= \cA\otimes\cB(\R_+),\\
    \hP_\xi &= P_\xi\otimes P_\gb,\quad \xi\in\cG.
\end{align*}
We extend $X$, $L$, $K$, and $S$ in the canonical way to these enlarged probability
spaces, but for simplicity keep the same notation for these quantities.

Set
\begin{equation}    \label{eq1.28}
    \zeta_\gb = \inf\bigl\{t\ge 0,\,L_t>S\bigr\},
\end{equation}
and observe that since $K$ is measurable we may write $\zeta_\gb = K_S$. Thus as
above we get $X(\zeta_\gb) = v$. Define the \emph{killed process}
\begin{equation}    \label{eq1.29}
    \hX_t =  \begin{cases}
                X_t,    & t<\zeta_\gb,\\
                \gD,    & t\ge \zeta_\gb.
             \end{cases}
\end{equation}

Since this prescription for killing the process $X$ via the (PCHAF) $L$ is slightly
different from the method used in~\cite{BlGe68, Wi79}, we cannot use the results
proved there to conclude that the subprocess $\hX$ of $X$ is still a strong Markov
process. However, it has been proved in~\cite[Appendix~\ref{0_app_killing}]{BMMG0}
that the strong Markov property is preserved under this method of killing, i.e.,
$\hX$ is a strong Markov process relative to its natural filtration (actually
relative to a larger filtration, but we will not use this here). Now we may employ
the arguments in section~2.7 of~\cite{KaSh91}, or in section~III.2 of~\cite{ReYo91}
to conclude that $\hX$ is a strong Markov process with respect to the universal
right continuous and complete augmentation of its natural filtration.

It is clear that $\hX$ has a.s.\ right continuous paths which admit left limits, and
that its paths on $[0,\zeta_\gb)$ are equal to those of $X$, and thus are continuous
on this random time interval. Moreover, it is obvious that for every $\xi\in\cGo$,
we have $P_\xi(\zeta_\gb \ge H_v)=1$. Therefore, up to its hitting time of the vertex,
$\hX$ is equivalent to a Brownian motion on the edge to which $\xi$ belongs, because
so is $X$. Altogether we have proved that --- under the hypothesis that $L$ is a
PCHAF, which will be argued below in all cases that we consider --- $\hX$ is a
Brownian motion on $\cG$ in the sense of definition~\Iref{def_3_1}.

There is a simple, useful relationship between the resolvents $R$ and $\hR$ of the
processes $X$ and $\hX$, respectively. Recall our convention that all functions $f$
on $\cG$ are extended to $\cG\cup\{\gD\}$ by $f(\gD)=0$.

\begin{lemma}   \label{lem1.6}
For all $\gl>0$, $f\in B(\cG)$, $\xi\in\cG$,
\begin{equation}    \label{eq1.30}
    \hR_\gl f(\xi)
        = R_\gl f(\xi) - e_\gl(\xi)\,\hE_v\bigl(e^{-\gl \zeta_\gb}\bigr)\,R_\gl f(v)
\end{equation}
holds true, where $e_\gl$ is defined in equation~\eqref{eq1.14}.
\end{lemma}

\begin{proof}
For $\gl>0$, $f\in B(\cG)$, $\xi\in\cG$
\begin{align*}
    \hR_\gl f(\xi)
        &= \hE_\xi\Bigl(\int_0^{\zeta_\gb} e^{-\gl t} f(X_t)\,dt\Bigr)\\
        &= R_\gl f(\xi) - \hE_\xi\Bigl(e^{-\gl\zeta_\gb} \int_0^\infty
                e^{-\gl t}\,f\bigl(X_{t+\zeta_\gb}\bigr)\,dt\Bigr).
\end{align*}
By construction, the last expectation value is equal to
\begin{equation*}
    \gb\int_0^\infty e^{-\gb s}\int_0^\infty e^{-\gl t}
        E_\xi\Bigl(e^{-\gl K_s}\,f\bigl(X_{t+K_s}\bigr)\Bigr)\,dt\,ds,
\end{equation*}
where we used Fubini's theorem. Consider the expectation value under the integrals,
and recall that for fixed $s\in\R_+$, $K_s$ is an $\cF$--stopping time, while
$X$ is strongly Markovian relative to $\cF$. Hence we can compute as follows
\begin{align*}
    E_\xi\Bigl(e^{-\gl K_s}\,f\bigl(X_{t+K_s}\bigr)\Bigr)
        &= E_\xi\Bigl(e^{-\gl K_s}\,E_\xi\Bigl(f\bigl(X_{t+K_s}\cond \cF_{K_s}\bigr)\Bigr)\Bigr)\\
        &= E_\xi\Bigl(e^{-\gl K_s}\,E_{X(K_s)}\bigl(f(X_t)\bigr)\Bigr)\\
        &= E_\xi\bigl(e^{-\gl K_s}\bigr)\,E_v\bigl(f(X_t)\bigr),
\end{align*}
where we used the fact that a.s.\ $X(K_s)=v$. So far we have established
\begin{equation*}
    \hR_\gl f(\xi) = R_\gl f(\xi) - \hE_\xi\bigl(e^{-\gl \zeta_\gb}\bigr)\,R_\gl f(v).
\end{equation*}
In order to compute the expectation value on the right hand side, we first remark
that because $L$ is zero until $X$ hits the vertex for the first time, we find
that for given $s\in\R_+$, $K_s\ge H_v$, and therefore $K_s = H_v + K_s\comp \theta_{H_v}$.
Hence, and again by the strong Markov property,
\begin{align*}
    E_\xi\bigl(e^{-\gl K_s}\bigr)
        &= E_\xi\bigl(e^{-\gl H_v}\,e^{-\gl K_s\comp H_v}\bigr)\\
        &= E_\xi\bigl(e^{-\gl H_v}\,E_\xi\bigl(e^{-\gl K_s\comp H_v}\cond \cF_{H_v}\bigr)\bigr)\\
        &= E_\xi\bigl(e^{-\gl H_v}\bigr) E_v\bigl(e^{-\gl K_s}\bigr),
\end{align*}
Integrating the last identity against the exponential law in the variable $s$, we
find with formula~\eqref{eq1.14}
\begin{equation*}
    \hE_\xi\bigl(e^{-\gl\zeta_\gb}\bigr) = e_\gl(\xi)\,\hE_v\bigl(e^{-\gl \zeta_\gb}\bigr),
\end{equation*}
and the proof is finished.
\end{proof}

\begin{remark}  \label{rem1.7}
Formula~\eqref{eq1.30} is quite useful, because if the resolvent of $X$ is known, then
--- in view of equation~\eqref{eq1.14} --- it reduces the calculation of $\hR_\gl$
to the computation of the Laplace transform of the density of $\zeta_\gb$ under
$\hP_v$.
\end{remark}

\section{The Walsh Process} \label{sect2}
The most basic process --- which on a single vertex graph plays the same role as a
reflected Brownian motion on the half line --- is the well-known \emph{Walsh
process}, which we denote by $W=(W_t,\,t\ge 0)$. It corresponds to the case where
the parameters $a$ and $c$ in the boundary condition~\eqref{eq1.1} both vanish. This
process has been introduced by Walsh in~\cite{Wa78} as a generalization of the skew
Brownian motion discussed in\cite[Chapter~4.2]{ItMc74} to a process in $\R^2$ which
only moves on rays connected to the origin.

A pathwise construction of the Walsh process in the present context is as follows.
Consider the paths of the standard Brownian motion $B=(B_t,\,t\ge 0)$ on $\R$, and
its associated reflected Brownian motion $|B|=(|B_t|,\,t\ge 0)$, where $|\,\cdot\,|$
denotes absolute value. Let $Z=\{t\ge 0,\,B_t=0\}$. Then its complement $Z^c$ is
open, and hence it is the pairwise disjoint union of a countable family of
\emph{excursion intervals} $I_j = (t_j, t_{j+1})$, $j\in\N$. Let $R=(R_j,\,j\in\N)$
be an independent sequence of identically distributed random variables, independent of $B$,
with values in $\sn$ such that $R_j$, $j\in\N$, takes the value $k\in\sn$ with probability
$w_k\in [0,1]$, $\sum_k w_k =1$. Now define $W_t = v$ if $t\in Z$, and if $t\in I_j$, and
$R_j = k$ set $W_t = (k, |B_t|)$. In other words, when starting at $\xi\in\cGo$, the
process moves as a Brownian motion on the edge containing $\xi$ until it hits the
vertex at time $H_v$, and then $W$ performs Brownian excursions
from the vertex $v$ into the edges $l_k$, $k\in\sn$, whereby the edge $l_k$ is
selected with probability $w_k$.

As for the standard Brownian motion on $\R$ (cf.\ subsection~\ref{ssect1.2}), we
may and will assume without loss of generality that $W$ has exclusively continuous
paths.

Walsh has remarked in the epilogue of~\cite{Wa78}, cf.\ also~\cite{BaPi89}, that it
is not completely straightforward to prove that this stochastic process is strongly
Markovian. A proof of the strong Markov property based on It\^o's excursion
theory~\cite{It71a} has been given in~\cite{Sa86a, Sa86b}. A construction of this
process via its Feller semigroup can be found in~\cite{BaPi89} (cf.\ also the references
quoted there for other approaches).

Next we check that the Walsh process has a generator with boundary condition at the
vertex given by \eqref{eq1.1} with $a=c=0$. Let $f\in\cD(A^w)$. At the vertex $v$
Dynkin's form for the generator reads
\begin{equation}    \label{eq2.1}
    A^w f(v) = \lim_{\gep\da 0} \frac{E_v\Bigl(f\bigl((X(\TW_{v,\gep})\bigr)\Bigr)-f(v)}
                                  {E_v(\TW_{v,\gep})},
\end{equation}
where $\TW_{v,\gep}$ is the hitting time of the complement of the open ball
$B_\gep(v)$ of radius $\gep>0$ around $v$.

\begin{lemma}   \label{lem2.1}
For the Walsh process $E_v(\TW_{v,\gep}) = \gep^2$.
\end{lemma}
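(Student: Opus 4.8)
The plan is to reduce the computation to the one-dimensional exit-time formula~\eqref{eq1.6}. The essential observation is that, in the pathwise construction of the Walsh process given above, the radial part of $W$ — its distance to the vertex — coincides with the reflected Brownian motion $|B|$. Indeed, by construction $W_t=v$ precisely when $B_t=0$, in which case $d(W_t,v)=0=|B_t|$, whereas on each excursion interval $W_t=(k,|B_t|)$ for the selected edge $k$, so that $d(W_t,v)=|B_t|$ there as well. Hence $d(W_t,v)=|B_t|$ for all $t\ge 0$, $P_v$--almost surely.

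First I would translate the exit time of the ball into a hitting time for $B$. The complement of the open ball $B_\gep(v)$ consists exactly of those points of $\cG$ at distance at least $\gep$ from $v$, so $\TW_{v,\gep}=\inf\{t>0:\,d(W_t,v)\ge\gep\}$. Substituting the identity $d(W_t,v)=|B_t|$ and using the a.s.\ continuity of the paths of $B$ started at $B_0=0$ under $P_v$, this first exit time equals the first instant at which $|B_t|$ reaches the level $\gep$, that is, the hitting time of the two--point set $\{-\gep,+\gep\}$ by $B$:
\begin{equation*}
    \TW_{v,\gep} = H^B_{\{-\gep,+\gep\}}.
\end{equation*}
Taking expectations and applying formula~\eqref{eq1.6} with $x=0$, $a=-\gep$ and $b=+\gep$ then yields
\begin{equation*}
    E_v(\TW_{v,\gep}) = E^Q_0\bigl(H^B_{\{-\gep,+\gep\}}\bigr) = \bigl(0-(-\gep)\bigr)(\gep-0) = \gep^2,
\end{equation*}
which is the claim.

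The only point requiring a little care — and the main (though rather minor) obstacle — is the identification $\TW_{v,\gep}=H^B_{\{-\gep,+\gep\}}$: one must ensure that the first passage of $W$ to distance $\gep$ and the first passage of $|B|$ to the level $\gep$ genuinely coincide, with no discrepancy introduced by the random selection of excursion edges or by the behaviour of $W$ on the zero set $Z$. Since the labels $R_j$ only determine \emph{which} edge an excursion lives on and never alter its radial size $|B_t|$, and since $d(W_t,v)=|B_t|$ holds throughout $\R_+$, no such discrepancy can arise; the continuity of $B$ started at the origin then removes any ambiguity between reaching the open ball's complement and touching the sphere of radius $\gep$.
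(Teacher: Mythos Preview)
Your proof is correct and follows essentially the same route as the paper's: both reduce $\TW_{v,\gep}$ to the hitting time of $\{-\gep,+\gep\}$ by the underlying one-dimensional Brownian motion (equivalently, the hitting time of $\gep$ by $|B|$) and then invoke formula~\eqref{eq1.6}. Your version is in fact a bit more explicit, establishing the pathwise identity $d(W_t,v)=|B_t|$ rather than merely equality in law, but the argument is the same.
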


\begin{proof}
Since by construction $W$ has infinite lifetime, $\TW_{v,\gep}$ is the hitting time
of the set of the $n$ points with local coordinates $(k,\gep)$, $\kn$. Therefore, by the
independence of the choice of the edge for the values of the excursion, it follows
that under $P_v$ the stopping time $\TW_{v,\gep}$ has the same law as the hitting
time of the point $\gep>0$ of a reflected Brownian motion on $\R_+$, starting at
$0$. Thus the statement of the lemma follows from equation~\eqref{eq1.6}.
\end{proof}

From the construction of $W$ we immediately get
\begin{equation*}
    E_v\Bigl(f\bigl(W(\TW_{v,\gep})\bigr)\Bigr) = \sum_{k=1}^n w_k f_k(\gep),
\end{equation*}
with the notation $f_k(x)=f(k,x)$, $x\in\R_+$. Inserting this into
equation~\eqref{eq2.1} we obtain
\begin{equation*}
    A^w f(v) = \lim_{\gep\da 0} \gep^{-2} \sum_{k=1}^n w_k \bigl(f_k(\gep)-f(v)\bigr),
\end{equation*}
and since $f'(v_k)$ exists (cf.\ section~\Iref{sect_5}) it is obvious that
this entails the condition
\begin{equation}    \label{eq2.2}
    \sum_{k=1}^n w_k f'(v_k) = 0.
\end{equation}
For later use we record this result as

\begin{theorem} \label{thm2.2}
Consider the boundary condition~\eqref{eq1.1} with $a=c=0$, and $b\in [0,1]^n$.
Let $W$ be a Walsh process as constructed above with the choice $w_k=b_k$,
$k\in\sn$. Then the generator $A^w$ of $W$ is $1/2$ times the second derivative on
$\cG$ with domain consisting of those $f\in\Cii$ which satisfy condition~\eqref{eq1.1b}.
\end{theorem}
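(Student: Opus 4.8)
The plan is to deduce the statement from Feller's theorem once the three boundary constants it attaches to $W$ have been identified, the identification coming from the Dynkin computation performed just above. First I would note that, by its pathwise construction together with the strong Markov and Feller properties quoted from~\cite{Sa86a,Sa86b,BaPi89}, the Walsh process $W$ is a Brownian motion on $\cG$ in the sense of definition~\Iref{def_3_1}. Theorem~\Iref{thm_5_3}, as recalled in subsection~\ref{ssect1.1}, then applies: $A^w$ acts as $\tfrac12$ times the second derivative on $\cGo$, and $\cD(A^w)$ consists exactly of those $f\in\Cii$ satisfying~\eqref{eq1.1b} for one fixed normalized triple $(a,c,b)$ with $a+c+\sum_k b_k=1$. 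The interior action is thereby settled, so the whole task reduces to proving that this triple equals $(0,0,w)$.

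The necessary half is the computation already made before the statement: since $A^w f\in\Co$, the value $A^w f(v)$ is finite for $f\in\cD(A^w)$, so the limit in Dynkin's form~\eqref{eq2.1} exists; Taylor expanding $f_k(\gep)=f(v)+\gep f'(v_k)+\tfrac12\gep^2 f''(v_k)+o(\gep^2)$, and using that $f''\in\Co$ forces $f''(v_k)=f''(v)$ together with $\sum_k w_k=1$, shows the limit is finite precisely when the $O(\gep)$ term vanishes, i.e.\ when $\sum_k w_k f'(v_k)=0$, which is~\eqref{eq2.2}. To turn this necessary condition into the exact triple I would compare the two linear functionals on $\Cii$ given by $\ell_1(f)=a f(v)+\tfrac{c}{2}f''(v)-\sum_k b_k f'(v_k)$ and $\ell_2(f)=\sum_k w_k f'(v_k)$. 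Feller's theorem gives $\cD(A^w)=\ker\ell_1$, while~\eqref{eq2.2} gives $\ker\ell_1\subseteq\ker\ell_2$; since $\ell_1\ne0$ (because $a+c+\sum_k b_k=1$) the inclusion forces $\ell_2=\mu\,\ell_1$, and $\ell_2\ne0$ gives $\mu\ne0$. Because $f(v)$, $f''(v)$ and $f'(v_1),\dots,f'(v_n)$ may be prescribed independently within $\Cii$, matching coefficients yields $a=c=0$ and $w_k=-\mu b_k$; the normalization $a+c+\sum_k b_k=1$ then forces $\mu=-1$, hence $b_k=w_k$.

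Combining the two parts, $\cD(A^w)=\{f\in\Cii:\sum_k w_k f'(v_k)=0\}$, which is exactly~\eqref{eq1.1b} with $a=c=0$ and $b_k=w_k$, while $A^w$ is $\tfrac12$ times the second derivative; this is the assertion. I expect the only delicate point to be the linear-functional identification: it is what upgrades the merely \emph{necessary} condition~\eqref{eq2.2} to a full characterization of $\cD(A^w)$, and it uses both the uniqueness of the Feller triple and the independence of the boundary data $f(v),f''(v),f'(v_k)$ in $\Cii$. The Taylor expansion and the interior action are routine once Feller's theorem and the edgewise Brownian behaviour of $W$ are in hand.
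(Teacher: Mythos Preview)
Your argument is correct and follows the same route as the paper: the Dynkin computation at $v$ (Lemma~\ref{lem2.1} for the denominator, the obvious identity for the numerator) yields the necessary condition~\eqref{eq2.2}, and Feller's theorem~\Iref{thm_5_3} supplies the form of the domain. The paper simply records~\eqref{eq2.2} and states the theorem, leaving the identification of the Feller triple implicit; you make that step explicit via the linear-functional argument $\ker\ell_1\subseteq\ker\ell_2\Rightarrow\ell_2=\mu\ell_1$, which is a clean way to close the gap and relies only on the routine fact that the boundary data $\bigl(f(v),f''(v),f'(v_1),\dots,f'(v_n)\bigr)$ can be prescribed independently in $\Cii$.
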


For the remainder of this section we make the choice $a=c=0$,
$w_k=b_k$, $k\in\sn$ in~\eqref{eq1.1}.

Next we compute the resolvent of $W$. Let $\gl>0$, $f\in\Co$, and consider first
$\xi=v$. Without loss of generality, we may assume that $W$ has been constructed pathwise
from a standard Brownian motion $B$ as described above, and that $B$ is as in
subsection~\ref{ssect1.2}. Then we get
\begin{equation*}
    E_v\bigl(f(W_t)\bigr)
        = \sum_{m=1}^n b_m E^Q_0\bigl(f_m(|B_t|)\bigr).
\end{equation*}
Hence we find for the resolvent $\RW$ of the Walsh process
\begin{subequations}    \label{eq2.3}
\begin{equation}    \label{eq2.3a}
    \RW_\gl f(v)
        = \int_\cG \rW_\gl(v,\eta)\, f(\eta)\,d\eta
\end{equation}
with resolvent kernel $\rW_\gl (v,\eta)$, $\eta\in\cG$, given by
\begin{equation}    \label{eq2.3b}
    \rW_\gl(v,\eta) = \sum_{m=1}^n 2 b_m\, \frac{e^{-\sgl\, d(v,\eta)}}{\sgl}\,\onel[m](\eta),
\end{equation}
\end{subequations}
and where the integration in~\eqref{eq2.3a} is with respect to the Lebesgue
measure on $\cG$.

Now let $\xi\in\cG$. We use the first passage time formula~\eqref{eq1.23} together with
formulae~\eqref{eq1.14} and \eqref{eq2.3}, and obtain

\begin{lemma}   \label{lem2.3}
The resolvent of the Walsh process on $\cG$ is given by
\begin{subequations}    \label{eq2.4}
\begin{equation}    \label{eq2.4a}
    \RW_\gl f(\xi) = \int_\cG \rW_\gl(\xi,\eta) f(\eta)\,d\eta,\qquad \gl>0,\,\xi\in\cG,\,f\in B(\cG),
\end{equation}
with
\begin{align}
    \rW_\gl(\xi,\eta)
        &= r_\gl(\xi,\eta) + \sum_{k,m=1}^n e_{\gl,k}(\xi) \,\SW_{km}
                \,\frac{1}{\sgl}\,e_{\gl,m}(\eta), \label{eq2.4b}\\
    \SW_{km}
        &= 2 w_m - \gd_{km}, \label{eq2.4c}
\end{align}
where $r_\gl$ is defined in equation~\eqref{eq1.22a}, and where $e_{\gl,k}$,
$e_{\gl,m}$ denote the restrictions of $e_\gl$ (cf.~\eqref{eq1.14}) to
the edges $l_k$, $l_m$ respectively.
\end{subequations}
\end{lemma}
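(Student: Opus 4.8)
The plan is to feed the already-computed vertex resolvent \eqref{eq2.3} into the first passage time formula \eqref{eq1.23} and simply read off the kernel. Applying Lemma~\ref{lem1.4} to the Walsh process gives, for $\gl>0$, $\xi\in\cG$ and $f\in B(\cG)$,
\begin{equation*}
    \RW_\gl f(\xi) = R^D_\gl f(\xi) + e_\gl(\xi)\,\RW_\gl f(v).
\end{equation*}
Both terms on the right are known explicitly: $R^D_\gl$ has the integral kernel $r^D_\gl = r_\gl - r_{v,\gl}$ of \eqref{eq1.22}, while $\RW_\gl f(v)$ is given by \eqref{eq2.3}. Although \eqref{eq2.3} was derived for $f\in\Co$, the underlying identity $E_v\bigl(f(W_t)\bigr) = \sum_m b_m E^Q_0\bigl(f_m(|B_t|)\bigr)$ holds for all bounded measurable $f$, so \eqref{eq2.3}, and hence the whole computation, extends to $f\in B(\cG)$. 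Substituting these two expressions turns the displayed identity into a single integral $\int_\cG(\,\cdot\,)\,f(\eta)\,d\eta$, and it then remains to identify the bracketed kernel with the right-hand side of \eqref{eq2.4b}.

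First I would rewrite the cross term. Using \eqref{eq1.14} in the form $e_\gl(\xi)=\sum_k e_{\gl,k}(\xi)$ (the restrictions being supported on pairwise disjoint edges), and reading off from \eqref{eq2.3b} that $\rW_\gl(v,\eta) = \sgl^{-1}\sum_m 2 b_m\,e_{\gl,m}(\eta)$, the contribution $e_\gl(\xi)\,\rW_\gl(v,\eta)$ becomes the double sum
\begin{equation*}
    e_\gl(\xi)\,\rW_\gl(v,\eta)
        = \sum_{k,m=1}^n e_{\gl,k}(\xi)\,\frac{2 b_m}{\sgl}\,e_{\gl,m}(\eta).
\end{equation*}
Next I would rewrite $r_{v,\gl}$ using the second line of \eqref{eq1.22b}: since $\onel(\xi)\,e_\gl(\xi)=e_{\gl,k}(\xi)$ and $e_\gl(\eta)\,\onel(\eta)=e_{\gl,k}(\eta)$ (the same index $k$ on both factors), this kernel is the \emph{diagonal} double sum $\sum_{k,m}\gd_{km}\,\sgl^{-1}\,e_{\gl,k}(\xi)\,e_{\gl,m}(\eta)$.

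Finally I would combine the $-r_{v,\gl}$ contribution coming from $r^D_\gl$ with the cross term just computed. Both are double sums of the shape $\sum_{k,m} e_{\gl,k}(\xi)\,\sgl^{-1}\,e_{\gl,m}(\eta)$ weighted by a coefficient, and adding them produces the coefficient $2 b_m - \gd_{km}$, which is exactly $\SW_{km}$ once we recall the standing choice $w_k=b_k$. Together with the surviving $r_\gl(\xi,\eta)$ from $r^D_\gl$, this yields \eqref{eq2.4b}. The only step requiring genuine care is the bookkeeping of the edge indicators $\onel$ in the two middle paragraphs: one must check that $r_{v,\gl}$ supplies precisely the Kronecker-diagonal part $-\gd_{km}$, while the edge-selection mechanism of the Walsh process supplies the full off-diagonal part $2 w_m$. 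This is the point at which the off-diagonal structure of the scattering matrix $\SW$ arises, and it is the only place where the argument is more than a direct substitution.
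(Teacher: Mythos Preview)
Your proof is correct and follows exactly the route indicated in the paper: the paragraph preceding the lemma says ``We use the first passage time formula~\eqref{eq1.23} together with formulae~\eqref{eq1.14} and \eqref{eq2.3}, and obtain\ldots'', which is precisely your substitution of $R^D_\gl$ and $\RW_\gl f(v)$ into Lemma~\ref{lem1.4}. You have simply spelled out the bookkeeping of the edge indicators that the paper leaves implicit, including the identification of the diagonal part $-\gd_{km}$ coming from $r_{v,\gl}$ and the full matrix $2w_m$ coming from~\eqref{eq2.3b}.
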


\begin{remark}  \label{rem2.4}
The matrix $\SW=\bigl(\SW_{km},\,k,m = 1,\dotsc,n\bigr)$ is the \emph{scattering
matrix} as defined in quantum mechanics. We briefly recall its construction in the
present context, for more details the interested reader is referred
to~\cite{KoSc99}. $\SW$ is obtained from the boundary conditions at the vertex $v$
in the following way. Consider a function $f$ on $\cG$ which is continuously
differentiable in $\cGo=\cG\setdif\{v\}$, and such that for all $\kn$ the limits
\begin{align*}
    F_k  &= f(v_k) = \lim_{\xi\to v,\,\xi\in l_k\op} f(\xi)\\
    F'_k &= f'(v_k) = \lim_{\xi\to v,\,\xi\in l_k\op} f'(\xi)
\end{align*}
exist. Define two column vectors $F$, $F'\in\C^n$, having the components $F_k$ and
$F'_k$, $\kn$, respectively. Furthermore, consider boundary conditions of the following
form
\begin{equation}    \label{bc}
    A F + B F' =0,
\end{equation}
where $A$ and $B$ are complex $n\times n$ matrices. The \emph{on-shell scattering
matrix at energy $E>0$} is defined as
\begin{equation}    \label{SE}
    S_{A,B}(E) = - (A+i\sqrt{E}B)^{-1}(A-i\sqrt{E}B),
\end{equation}
which exists and is unitary, provided the $n\times 2n$ matrix $(A,B)$ has
maximal rank (i.e., rank $n$) and $AB^\dagger$ is hermitian. These requirements for $A$
and $B$ guarantee that the corresponding Laplacean is a self-adjoint operator
on $L^2(\cG)$ (with Lebesgue measure). Observe that under these conditions the
boundary conditions~\eqref{bc} are equivalent to any boundary conditions of the form
$CAF+CBF'=0$ where $C$ is invertible. Also $S_{CA, CB}(E) = S_{A,B}(E)$ holds true.
For the Walsh process at hand, concrete choices for $A$ and $B$ are given by
\begin{equation*}
    A^w=\begin{pmatrix}
            0&0&0&\ldots&0&0\\
            1&-1&0&\ldots&0&0\\
            0&1&-1&\ldots&0&0\\
            0&0&1&\ldots &0&0\\
            \vdots&\vdots&\vdots&\ddots&\vdots&\vdots\\
            0&0&0&\ldots&1&-1
        \end{pmatrix},\ 
    B^w=\begin{pmatrix}
            b_1&b_2&b_3&\ldots&b_{n-1}&b_n\\
            0&0&0&\ldots&0&0\\
            0&0&0&\ldots&0&0\\
            \vdots&\vdots&\vdots&\ddots&\vdots&\vdots\\
            0&0&0&\ldots&0&0\\
            0&0&0&\ldots&0&0
        \end{pmatrix}.
\end{equation*}
Then~\eqref{bc} is the condition that $f$ is actually continuous at the vertex $v$,
i.e., $f(v_k)=f(v_m)$, $k$, $m=1,\dotsc,n$, and that~\eqref{eq2.2} is valid (with
$w_k=b_k$, $k\in\sn$). Obviously, $(A^w, B^w)$ has maximal rank. However, $A^w
(B^w)^\dagger$ is hermitian if and only if all $b_k$ are equal (i.e., $b_k = 1/n$,
$\kn$). Nevertheless, \eqref{SE} exists also in the non-hermitian case, and $S_{A^w,
B^w}(E)=S^w$ holds for all $E>0$ due to the relations $A^w S^w = - A^w$, and $B^w
S^w = B^w$. In addition, the following relations are valid:
\begin{align}
         \SW &= \bigl(\SW\bigr)^{-1}, \label{eq2.5}\\
    \det \SW &= (-1)^{n+1}.           \label{eq2.6}
\end{align}
Furthermore, $S^w$ is a contraction, and the associated Laplace operator is
m-dissipative on $L^2(\cG)$ since trivially $\text{Im}(AB^\dagger)=0$, cf.\ Theorem~2.5
in~\cite{KoPo09c}. When all $b_k$ are equal, such that $A^w(B^w)^\dag=0$, then $S^w$ is
an involutive, orthogonal matrix of the form
\begin{equation}	\label{eq2.6a}
	S^w = - \1 + 2P_n.
\end{equation}
$P_n$ is the matrix whose entries are equal to $1/n$.
$P_n$ is a real orthogonal projection, that is $P_n = P_n^\dag = P_n^t = P_n^2$.
It is also of rank $1$, that is $\dim \text{Ran} P_n = 1$.
The relation~\eqref{eq2.4b} giving the resolvent in terms of the
scattering matrix is actually valid in the more general context of arbitrary metric
graphs and boundary conditions of the form~\eqref{bc}, see~\cite{KoSc06, KoPo09c}.
\end{remark}

It is straightforward to compute the inverse Laplace transform of the right hand side
of formula~\eqref{eq2.4b}, and this yields the following result.

\begin{lemma}   \label{lem2.5}
For $t>0$, $\xi$, $\eta\in\cG$ the transition density of the Walsh process on $\cG$ is
given by
\begin{align}
    \pW(t,\xi,\eta)
        &= p^D(t,\xi,\eta) + \sum_{k,m=1}^n \onel(\xi)\,
                2w_m\, g\bigl(t,d_v(\xi,\eta)\bigr)\,\onel[m](\eta), \label{eq2.7}\\
        &= p(t,\xi,\eta) + \sum_{k,m=1}^n \onel(\xi)\,
                \SW_{km} g\bigl(t,d_v(\xi,\eta)\bigr)\,\onel[m](\eta). \label{eq2.8}
\end{align}
$p(t,\xi,\eta)$ is defined in equation~\eqref{eq1.16}, $p^D(t,\xi,\eta)$ in
equation~\eqref{eq1.20}, $g$ is the Gau{\ss}-kernel~\eqref{eq1.4}, and $d_v$ is
defined in equation~\eqref{eq1.15}.
\end{lemma}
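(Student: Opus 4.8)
The plan is to obtain $\pW$ simply by inverting, in the variable $\gl$, the Laplace transform furnished by the resolvent kernel $\rW_\gl$ of Lemma~\ref{lem2.3}. Since $\RW_\gl f(\xi)=\int_0^\infty e^{-\gl t}\,E_\xi\bigl(f(W_t)\bigr)\,dt$, the kernel $\rW_\gl(\xi,\eta)$ in~\eqref{eq2.4b} is exactly the Laplace transform in $t$ of the sought transition density $\pW(t,\xi,\eta)$. As the double sum in~\eqref{eq2.4b} is finite, the inversion can be carried out term by term, and uniqueness of Laplace transforms then identifies $\pW$ with the resulting expression.

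First I would treat the two summands of~\eqref{eq2.4b} separately. Comparing~\eqref{eq1.22a} with~\eqref{eq1.16} term by term, the first summand $r_\gl(\xi,\eta)$ is the Laplace transform of the free kernel $p(t,\xi,\eta)$; the single elementary fact needed is the transform pair $e^{-\sgl a}/\sgl=\int_0^\infty e^{-\gl t}\,g(t,a)\,dt$ for the Gau{\ss}-kernel $g$, which follows from~\eqref{LT} (differentiate in $a$) or from standard tables. Hence the first summand inverts to $p(t,\xi,\eta)$.

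For the second summand I would use the definition~\eqref{eq1.14} of $e_\gl$ to write $e_{\gl,k}(\xi)=\onel(\xi)\,e^{-\sgl d(\xi,v)}$, so that the product collapses, with $d_v(\xi,\eta)=d(\xi,v)+d(v,\eta)$ from~\eqref{eq1.15}, into
\begin{equation*}
    e_{\gl,k}(\xi)\,\frac{1}{\sgl}\,e_{\gl,m}(\eta)
        = \onel(\xi)\,\frac{e^{-\sgl d_v(\xi,\eta)}}{\sgl}\,\onel[m](\eta).
\end{equation*}
This is precisely the $(k,m)$ building block of $r_{v,\gl}$ in~\eqref{eq1.22b}, and by the same transform pair (now with $a=d_v(\xi,\eta)$, which is $\gl$-independent) it inverts to $\onel(\xi)\,g\bigl(t,d_v(\xi,\eta)\bigr)\,\onel[m](\eta)$. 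Summing against the weights $\SW_{km}$ yields formula~\eqref{eq2.8} directly.

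Finally, to pass from~\eqref{eq2.8} to~\eqref{eq2.7} I would substitute $\SW_{km}=2w_m-\gd_{km}$ from~\eqref{eq2.4c}. The $-\gd_{km}$ contribution collapses to $-\sum_{k}\onel(\xi)\,g\bigl(t,d_v(\xi,\eta)\bigr)\,\onel(\eta)=-p_v(t,\xi,\eta)$ by~\eqref{eq1.17}, and combining this with $p=p^D+p_v$ from~\eqref{eq1.20} converts $p$ into $p^D$, leaving the $2w_m$ contribution as the stated double sum. The computation is essentially routine; the only point requiring any care is the algebraic collapse of the two $e_\gl$ factors into a single $d_v$-exponential, together with the observation that each resulting off-diagonal block has exactly the form of the diagonal blocks of $r_{v,\gl}$, so that the known inversion $e^{-\sgl d_v}/\sgl\mapsto g(t,d_v)$ applies verbatim and no genuinely new estimate is needed.
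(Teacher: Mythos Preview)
Your proposal is correct and follows exactly the route the paper indicates: the paper simply remarks that ``it is straightforward to compute the inverse Laplace transform of the right hand side of formula~\eqref{eq2.4b}'' and states Lemma~\ref{lem2.5} without further details. You have carried out precisely this inversion, using the elementary transform pair $e^{-\sgl a}/\sgl \mapsto g(t,a)$ term by term and then performing the algebraic passage from~\eqref{eq2.8} to~\eqref{eq2.7} via $\SW_{km}=2w_m-\gd_{km}$ and $p=p^D+p_v$.
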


\begin{remark}  \label{rem2.6}
Alternatively $\pW(t,\xi,\eta)$ can also be written as
\begin{equation}    \label{eq2.9}
\begin{split}
    \pW(t,\xi,\eta)
        &= p(t,\xi,\eta)\\
        &\  + \sum_{k,m=1}^n \onel(\xi)
            \int_0^t P_\xi(H_v\in ds)\, \SW_{km}\, g\bigl(t-s,d(v,\eta)\bigr)\,\onel[m](\eta).
\end{split}
\end{equation}
Even though this formula appears somewhat more complicated than~\eqref{eq2.8}, it
exhibits the role of the scattering matrix $\SW$, that is, it describes more clearly what
happens when the process hits the vertex.
\end{remark}

\section{The Elastic Walsh Process} \label{sect3}
In this section we consider the boundary conditions~\eqref{eq1.1} with $0<a<1$ and
$c=0$. The corresponding stochastic process, which we will denote by $\We$, is constructed
from the Walsh process $W$ of the previous section in the same way as the elastic Brownian
motion on $\R_+$ is constructed from a reflected Brownian motion (cf., e.g., \cite{ItMc63},
\cite[Chapter~2.3]{ItMc74}, \cite[Chapter~6.2]{Kn81}, \cite[Chapter~6.4]{KaSh91}).

In more detail, the construction is as follows.
Consider the Walsh process $W$ as discussed in the previous section. We may continue
to suppose that $W$ has been constructed pathwise from a standard Brownian motion
$B$, as it has been described there. But then the local time of $W$ at the vertex,
denoted by $L^w$, is pathwise equal to the local time of the Brownian motion at the
origin (and we continue to use the normalization determined by~\eqref{eq1.7a}). It
is well-known (e.g., \cite{ItMc74, Kn81, KaSh91, ReYo91}) that $L^w$ has all
properties of a PCHAF as formulated in subsection~\ref{ssect1.5} for the construction
of a subprocess by killing $W$ at the vertex. We continue to denote the rate of the
exponential random variable $S$ used there by $\gb>0$. Let $\We$ be the subprocess
so obtained. In particular (cf.~\ref{ssect1.5}), $\We$ is a Brownian motion on
$\cG$, and in analogy with the case of a Brownian motion on the real line we call
this stochastic process the \emph{elastic Walsh process}. We write $\zeta_{\gb,0}$
for the lifetime of $\We$ (i.e., for the random time corresponding to $\zeta_\gb$ in
subsection~\ref{ssect1.5}).

We proceed to show that the elastic Walsh process $\We$ has a generator $A^e$ with
domain $\cD(A^e)$
which satisfies the boundary conditions as claimed. In other words, we claim that there
exist $a\in(0,1)$ and $b_k\in (0,1)$, $k\in\sn$, with $a + \sum_k b_k =1$, so that
for all $f\in\cD(G)$,
\begin{equation}    \label{eq3.1}
    a f(v) = \sum_{k=1}^n b_k f'(v_k)
\end{equation}
holds. To this end, we calculate $A^e f(v)$ in Dynkin's form. We shall use a notation
similar to the one used in subsection~\ref{ssect1.5}. Namely, let $\hP_v$ and $\hE_v$
denote the probability and expectation, respectively, on the probability space extended by
$(\R_+,\cB(\R_+),P_\gb)$, while the corresponding symbols without $\hat{\,\cdot\,}$
are those for the Walsh process without killing.

For $\gep>0$ and under $\We$ let $\TeW_{v,\gep}$ denote the hitting time of the
complement $B_\gep(v)^c$  of the open ball $B_\gep(v)$ of radius $\gep>0$ with center $v$. Then
$\TeW_{v,\gep} = \TW_{v,\gep} \land \zeta_{\gb,0}$, where as before $\TW_{v,\gep}$
is the hitting time of $B_\gep(v)^c$ by the Walsh process $W$. (Note that $B_\gep(v)^c$
contains the cemetery $\gD$.) We find
\begin{equation}    \label{eq3.1a}
    \hE_v\Bigl(f\bigl(\We(\TeW_{v,\gep})\bigr)\Bigr)
        = \Bigl(\sum_{k=1}^n w_k f_k(\gep)\Bigr)\,\hP_v\bigl(\TW_{v,\gep}<\zeta_{\gb,0}\bigr).
\end{equation}
The probability in the last expression is taken care of by the following lemma.

\begin{lemma}   \label{lem3.1}
For all $\gep$, $\gb>0$,
\begin{equation}    \label{eq3.2}
    \hP_v\bigl(\TW_{v,\gep} < \zeta_{\gb,0}\bigr) = \frac{1}{1+\gep\gb}.
\end{equation}
\end{lemma}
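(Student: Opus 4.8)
The plan is to express the event $\{\TW_{v,\gep}<\zeta_{\gb,0}\}$ entirely in terms of the local time accumulated up to time $\TW_{v,\gep}$, and then to transport the resulting expectation to the underlying standard Brownian motion, where the relevant law is supplied by Lemma~\ref{lem1.3}.

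First I would unravel the lifetime. Since $\zeta_{\gb,0}=K_S$, with $K$ the right continuous pseudo-inverse of $\LW$ and $S$ the $P_\gb$--exponential variable adjoined to the sample space, complementing the relation $\{K_s<t\}=\{\LW_t>s\}$ of~\eqref{eq_set_K} gives $\{K_S\ge\TW_{v,\gep}\}=\{\LW(\TW_{v,\gep})\le S\}$. At time $\TW_{v,\gep}$ the process $W$ sits at distance $\gep>0$ from $v$, whereas $K_S$ is by construction an instant at which $W$ is at the vertex; since both times are $\hP_v$--a.s.\ finite, the event $\{K_S=\TW_{v,\gep}\}$ is $\hP_v$--null, and therefore, up to null sets,
\begin{equation*}
    \{\TW_{v,\gep}<\zeta_{\gb,0}\}=\{\LW(\TW_{v,\gep})\le S\}.
\end{equation*}
Here it is essential that $\LW$ grows only while $W$ is at the vertex, so that $\LW(\TW_{v,\gep})$ is exactly the local time amassed during those excursions that fail to reach distance $\gep$.

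Next I would integrate out $S$. Under $\hP_v=P_v\otimes P_\gb$ the variable $S$ is independent of $W$, hence of $\LW(\TW_{v,\gep})$, so conditioning on the Walsh path and using $P_\gb(S\ge x)=e^{-\gb x}$ yields
\begin{equation*}
    \hP_v\bigl(\TW_{v,\gep}<\zeta_{\gb,0}\bigr)
        = E_v\Bigl(e^{-\gb\,\LW(\TW_{v,\gep})}\Bigr).
\end{equation*}
It then remains to identify the law of $\LW(\TW_{v,\gep})$. By the pathwise construction of $W$ from the standard Brownian motion $B$, the functional $\LW$ coincides pathwise with the local time $L^B$ of $B$ at the origin, while --- as already used in the proof of Lemma~\ref{lem2.1} --- the hitting time $\TW_{v,\gep}$ of the $n$ points at distance $\gep$ is the first instant at which $|B|$ reaches $\gep$, that is $H^B_{\{-\gep,+\gep\}}$. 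Thus $\LW(\TW_{v,\gep})=L^B\bigl(H^B_{\{-\gep,+\gep\}}\bigr)$, which by Lemma~\ref{lem1.3} is exponentially distributed with mean $\gep$ under $Q_0$, hence under $P_v$ via the pathwise identification.

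Finally, evaluating the Laplace transform of an exponential law of mean $\gep$ at $\gb$,
\begin{equation*}
    E_v\Bigl(e^{-\gb\,\LW(\TW_{v,\gep})}\Bigr)
        = \int_0^\infty e^{-\gb y}\,\frac{1}{\gep}\,e^{-y/\gep}\,dy
        = \frac{1}{1+\gep\gb},
\end{equation*}
which is the claimed identity. The only delicate point is the first step: one must argue, off a null set, that reaching distance $\gep$ before being killed is equivalent to the accumulated local time not having exceeded the independent threshold $S$, and this rests on the continuity of $\LW$ together with the fact that the killing instant $K_S$ necessarily falls at the vertex.
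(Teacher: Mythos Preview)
Your proof is correct and follows essentially the same route as the paper's: reduce the event to a statement about $L^w(\TW_{v,\gep})$ versus the independent exponential threshold $S$, transport to the underlying Brownian motion, and invoke Lemma~\ref{lem1.3}. The only cosmetic difference is that you pass directly to the Laplace transform $E_v\bigl(e^{-\gb L^w(\TW_{v,\gep})}\bigr)$, whereas the paper first computes the tail $P_v\bigl(K^w_s<\TW_{v,\gep}\bigr)=e^{-s/\gep}$ for fixed $s$ and then integrates against $\gb e^{-\gb s}\,ds$; these are the same computation organized in two equivalent ways.
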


\begin{proof}
We may consider the Walsh process $W$ as being pathwise constructed from a standard
Brownian motion $B$ on the real line as in the previous section, and we shall use
the notations and conventions from there. Then it is clear that under $P_v$ and
under $\hP_v$, $\TW_{v,\gep}$ has the same law as the hitting time of the point
$\gep$ in $\R_+$ by the reflecting Brownian motion $|B|$ under $Q_0$, that is, as
$H^B_{\{-\gep,\gep\}}$ of the Brownian motion $B$ itself under $Q_0$. Let $K^w$ denote
the right continuous pseudo-inverse of $L^w$. For fixed $s\in\R_+$ we get
\begin{equation*}
    \{K^w_s < \TW_{v,\gep}\} = \{L^w(\TW_{v,\gep})>s\}.
\end{equation*}
Hence
\begin{align*}
    P_v\bigl(K^w_s < \TW_{v,\gep}\bigr)
        &= P_v\bigl(L^w(\TW_{v,\gep})>s\bigr)\\
        &= Q_0\bigl(L^B(H^B_{\{-\gep,\gep\}})>s\bigr).
\end{align*}
In appendix~\ref{0_app_LT} of~\cite{BMMG0} is shown with the method
in~\cite[Section~6.4]{KaSh91} that under $Q_0$ the random variable
$L^B(H^B_{\{-\gep,\gep\}})$ is exponentially distributed with mean $\gep$. So we find
\begin{equation*}
    P_v\bigl(K^w_s < \TW_{v,\gep}\bigr) = e^{-s/\gep}.
\end{equation*}
We integrate this relation against the exponential law with rate $\gb$ in the variable $s$,
and obtain
\begin{align*}
    \hP_v\bigl(\zeta_{\gb,0}>\TW_{v,\gep}\bigr)
        &= 1 - \gb\int_0^\infty e^{-\gb s}\,P_v\bigl(K^W_s < \TW_{v,\gep}\bigr)\,ds\\
        &= \frac{1}{1+\gep\gb}.
\end{align*}
We used the fact that due to the continuity of the paths of $W$ we have
$\zeta_{\gb,0}\ne \TW_{v,\gep}$.
\end{proof}

We insert formula~\eqref{eq3.2} into equation~\eqref{eq3.1a}, and obtain
\begin{align*}
    A^e f(v)
        &=  \lim_{\gep\da 0}\, \frac{1}{\hE_v(\TeW_{v,\gep})}
                \,\Bigr(\hE_v\Bigl(f\bigl(\We(\TeW_{v,\gep})\bigr)\Bigr) - f(v)\Bigl)\\
        &=  \lim_{\gep\da 0}\, \frac{1}{\hE_v(\TeW_{v,\gep})}
                \,\Bigl(\frac{1}{1+\gep\gb}\,\sum_{k=1}^n w_k f_k(\gep) - f(v)\Bigr)\\
        &=  \lim_{\gep\da 0}\, \frac{\gep}{\hE_v(\TeW_{v,\gep})}
                \frac{1}{1+\gep\gb}\,\Bigl(\sum_{k=1}^n w_k\,\frac{f_k(\gep)-f(v)}{\gep}
                    -\gb f(v)\Bigr).
\end{align*}
Obviously $\hE_v(\TeW_{v,\gep})\le E_v(\TW_{v,\gep})=\gep^2$ (cf.~lemma~\ref{lem2.1}).
Since the last limit and $f'(v_k)$, $k\in\sn$, exist and are finite, we get as a necessary
condition
\begin{equation}    \label{eq3.3}
    \sum_{k=1}^n w_k f'(v_k) - \gb f(v) =0.
\end{equation}
Thus we have proved the following theorem.

\begin{theorem} \label{thm3.2}
Consider the boundary condition~\eqref{eq1.1} with $a\in(0,1)$,
$b\in [0,1]^n$, and $c=0$. Set
\begin{equation}    \label{eq3.4}
    w_k = \frac{b_k}{1-a},\,\kn,\quad \gb = \frac{a}{1-a},
\end{equation}
and let $\We$ be the elastic Walsh process as constructed above with these
parameters. Then the generator $A^e$ of $\We$ is $1/2$ times the second derivative
on $\cG$ with
domain consisting of those $f\in\Cii$ which satisfy condition~\eqref{eq1.1b}.
\end{theorem}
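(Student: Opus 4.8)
The plan is to combine the necessary condition~\eqref{eq3.3} just derived with Feller's theorem via a short rigidity argument, following the same template as theorem~\ref{thm2.2}. First I would note that by the construction in subsection~\ref{ssect1.5}, $\We$ is a Brownian motion on $\cG$ in the sense of definition~\Iref{def_3_1}: the local time $L^w$ is pathwise the local time of $B$ at the origin, hence a PCHAF, which is exactly the hypothesis needed there. Feller's theorem~\eqref{eq1.1} therefore applies and yields that $A^e$ is $1/2$ times the second derivative, that $\cD(A^e)\subseteq\Cii$, and that there are normalized parameters $(\tilde a,\tilde c,\tilde b)$ with $\tilde a+\tilde c+\sum_k\tilde b_k=1$ for which $\cD(A^e)$ consists \emph{exactly} of the $f\in\Cii$ satisfying
\begin{equation*}
    \tilde a\,f(v)+\frac{\tilde c}{2}\,f''(v)=\sum_{k=1}^n \tilde b_k\,f'(v_k).
\end{equation*}

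Next I would convert the Dynkin calculation into a second boundary relation valid on $\cD(A^e)$. For $f\in\cD(A^e)$ the limit $A^ef(v)$ is finite, while the prefactor $\gep/\hE_v(\TeW_{v,\gep})$ is bounded below by $\gep^{-1}$ thanks to $\hE_v(\TeW_{v,\gep})\le\gep^2$ (lemma~\ref{lem2.1}); consequently the bracketed term must tend to zero, which is precisely~\eqref{eq3.3}. Inserting $w_k=b_k/(1-a)$ and $\gb=a/(1-a)$ and clearing the factor $1-a$ turns this into $a\,f(v)=\sum_k b_k f'(v_k)$, i.e.\ condition~\eqref{eq1.1b} with $c=0$.

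The heart of the argument is to upgrade this \emph{necessary} condition to the full characterization of the domain. The idea is to regard both relations as linear functionals on the boundary data $(f(v),f''(v),f'(v_1),\dotsc,f'(v_n))\in\R^{n+2}$. Every such tuple is realized by some $f\in\Cii$ --- take on each edge a Taylor polynomial with the prescribed values, multiplied by a fixed smooth cutoff supported near the vertex --- so no hidden relation constrains these boundary values. Since Feller's relation characterizes $\cD(A^e)$ exactly and~\eqref{eq3.1} vanishes on $\cD(A^e)$, the kernel of the Feller functional is contained in the kernel of $f\mapsto a f(v)-\sum_k b_k f'(v_k)$; for linear functionals this forces proportionality. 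Matching coefficients, the absence of an $f''(v)$ term in~\eqref{eq3.1} gives $\tilde c=0$, and the common normalization $\tilde a+\sum_k\tilde b_k=1=a+\sum_k b_k$ pins the proportionality constant to $1$, so $(\tilde a,\tilde c,\tilde b)=(a,0,b)$. This identifies $\cD(A^e)$ with $\{f\in\Cii:\ af(v)=\sum_k b_k f'(v_k)\}$.

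The step I expect to be the main obstacle is exactly this last upgrade: the Dynkin computation yields only the containment $\cD(A^e)\subseteq\{f\in\Cii:\ af(v)=\sum_k b_k f'(v_k)\}$, and it is the rigidity built into Feller's theorem --- that the domain is carved out by a \emph{single} relation of type~\eqref{eq1.1b} --- that rules out $\cD(A^e)$ being a strictly smaller subspace satisfying two independent relations. The normalization $\tilde a+\tilde c+\sum_k\tilde b_k=1$ ensures the Feller functional is nonzero, which is what makes the kernel-containment argument legitimate.
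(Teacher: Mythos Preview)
Your proposal is correct and follows the same approach as the paper: derive the necessary condition~\eqref{eq3.3} via Dynkin's formula and lemma~\ref{lem3.1}, then invoke Feller's theorem to conclude. The paper is terser --- after obtaining~\eqref{eq3.3} it simply writes ``Thus we have proved the following theorem'' --- whereas you spell out the rigidity step (that the single Feller relation characterizing $\cD(A^e)$ must be proportional to~\eqref{eq3.3}, with the normalization fixing the constant); this is exactly the argument the paper leaves implicit, and your version is a legitimate and slightly more careful rendering of it.
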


\begin{remark}  \label{rem3.3a}
Note that condition~\eqref{eq1.1a} entails that if $w_k$ and $\gb$ are defined
by~\eqref{eq3.4} then $w_k\in [0,1]$, $\kn$, $\sum_k w_k=1$, and $\gb>0$. Therefore
the choice~\eqref{eq3.4} is consistent with the conditions on these parameters
required by the construction of the elastic Walsh process $\We$.
\end{remark}

Next we compute the resolvent $\ReW$ of the elastic Walsh process. As a byproduct this
will give another proof of theorem~\ref{thm3.2}. Moreover, it will provide us
with an explicit formula for the scattering matrix in this case. In contrast to the
calculations in~\cite[Chapter~2.3]{ItMc74}, \cite[Chapter~6.2]{Kn81} for the
classical case with $\cG=\R_+$, we do not use the first passage time
formula~\eqref{eq1.23}, but instead we use formula~\eqref{eq1.30}. This simplifies
the computation considerably.

Let $f\in\Co$, $\gl>0$, and $\xi\in\cG$. In the present context
formula~\eqref{eq1.30} reads
\begin{equation*}
    R^e_\gl f(\xi)
        = \RW_\gl f(\xi) - e_\gl(\xi)\,\hE_v\bigl(e^{-\gl \zeta_{\gb,0}}\bigr)\, \RW_\gl f(v),
\end{equation*}
where $\RW$ is the resolvent of the Walsh process without killing, and $e_\gl$ is
defined in~\eqref{eq1.14}. The Laplace transform of the density of $\zeta_{\gb,0}$ under
$\hP_v$ is readily computed:

\begin{lemma}   \label{lem3.3}
For all $\gl$, $\gb>0$,
\begin{equation*}
    \hE_v\bigl(e^{-\gl \zeta_{\gb,0}}\bigr) = \frac{\gb}{\gb + \sgl}.
\end{equation*}
\end{lemma}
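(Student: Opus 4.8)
The plan is to reduce the computation entirely to Lemma~\ref{lem1.2}, which already records the Laplace transform of the pseudo-inverse of the local time of a standard Brownian motion. First I would recall from subsection~\ref{ssect1.5} that the lifetime of $\We$ satisfies $\zeta_{\gb,0} = K^w_S$, where $K^w$ is the right continuous pseudo-inverse of the local time $L^w$ of $W$ at the vertex, and $S$ is the independent, exponentially distributed random variable with rate $\gb$ adjoined in the construction. Since $\We$ is built pathwise from a standard Brownian motion $B$ on $\R$, and since the local time $L^w$ of $W$ at the vertex is pathwise equal to the local time $L^B$ of $B$ at the origin (as noted at the start of this section), the two pseudo-inverses coincide pathwise, $K^w = K^B$.

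Next I would condition on the value of $S$. Using the independence of $S$ and $B$ together with Fubini's theorem, this gives
\begin{equation*}
    \hE_v\bigl(e^{-\gl \zeta_{\gb,0}}\bigr)
        = \hE_v\bigl(e^{-\gl K^B_S}\bigr)
        = \int_0^\infty \gb\, e^{-\gb s}\, E^Q_0\bigl(e^{-\gl K^B_s}\bigr)\,ds.
\end{equation*}
By Lemma~\ref{lem1.2}, equation~\eqref{eq1.13}, the inner Laplace transform equals $e^{-\sgl s}$, so the right hand side reduces to the elementary integral $\gb\int_0^\infty e^{-(\gb+\sgl)s}\,ds = \gb/(\gb+\sgl)$, which is precisely the asserted formula.

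I do not expect a serious obstacle here, since the genuine analytic content---namely the law, equivalently the Laplace transform, of $K^B_s$---is already packaged in Lemma~\ref{lem1.2}. The only points requiring care are the identification $K^w = K^B$, which rests on the pathwise coincidence of $L^w$ and $L^B$ recorded at the beginning of this section, and the measurability of $K^w$, which guarantees that $K^w_S$ is well defined and justifies the application of Fubini's theorem; both have been established in subsection~\ref{ssect1.5}.
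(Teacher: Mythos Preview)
Your proposal is correct and follows essentially the same route as the paper: identify $K^w$ with $K^B$ via the pathwise equality of the local times, invoke Lemma~\ref{lem1.2} for $E^Q_0\bigl(e^{-\gl K^B_s}\bigr)=e^{-\sgl s}$, and then integrate against the exponential law of $S$. The paper's argument is presented slightly more tersely but is otherwise identical in content.
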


\begin{proof}
As remarked before, we may consider $L^w$ to be equal to the local time at the
origin of the Brownian motion $B$ underlying the construction of $W$, and therefore
the analogous statement is true for the right continuous pseudo-inverse $K^w$ of $L^w$.
As above let $K^B$ denote the right continuous pseudo-inverse of $L^B$ (cf.~\ref{ssect1.2}).
Then for $s\in\R_+$,
\begin{align*}
    E_v\bigl(e^{-\gl K^w_s}\bigr)
        &= E^Q_0\bigl(e^{-\gl K^B_s}\bigr)\\
        &= e^{-\sgl s},
\end{align*}
where we used lemma~\ref{lem1.2}. Hence
\begin{equation*}
    \hE_v\bigl(e^{-\gl \zeta_{\gb,0}}\bigr)
        = \gb \int_0^\infty e^{-(\gb+\sgl) t}\,dt,
\end{equation*}
which proves the lemma.
\end{proof}

With lemma~\ref{lem3.3} we obtain the following formula
\begin{equation}    \label{eq3.5}
    R^e_\gl f(\xi)
        = \RW_\gl f(\xi) - \frac{\gb}{\gb + \sgl}\,e_\gl(\xi)\, \RW_\gl f(v).
\end{equation}
Note that $\RW_\gl f$ is in the domain of the generator of the Walsh process, and
therefore satisfies the boundary condition~\eqref{eq2.2}:
\begin{equation*}
    \sum_{k=1}^n w_k\bigl(\RW_\gl f\bigr)'(v_k) = 0.
\end{equation*}
On the other hand, we obviously have $e_\gl'(v_k) = -\sgl$ for all $k\in\sn$.
Thus with $\sum_{k=1}^n w_k =1$ we find,
\begin{equation*}
    \sum_{k=1}^n w_k \bigl(R^e_\gl f\bigr)'(v_k)
                    = \gb\,\frac{\sgl}{\gb+\sgl}\,\RW_\gl f(v),
\end{equation*}
while equation~\eqref{eq3.5} yields for $\xi=v$
\begin{equation*}
    R^e_\gl f(v) = \frac{\sgl}{\gb+\sgl}\,\RW_\gl f(v).
\end{equation*}
The last two equations show that for all $f\in\Co$, $\gl>0$, we have
\begin{equation*}
    \sum_{k=1}^n w_k \bigl(R^e_\gl f\bigr)'(v_k) = \gb\, R^e_\gl f(v).
\end{equation*}
Since for every $\gl>0$, $R^e_\gl$ maps $\Co$ onto the domain of the generator
of $\We$, we have another proof of theorem~\ref{thm3.2}.

Upon insertion of the expressions for the resolvent kernels of the Walsh process,
equations~\eqref{eq2.3}, and \eqref{eq2.4}, with the same notation as in
lemma~\ref{lem2.3} we immediately obtain the following result:

\begin{lemma}   \label{lem3.4}
For $\gl>0$, $\xi$, $\eta\in\cG$ the resolvent kernel of the elastic Walsh process $\We$
is given by
\begin{subequations}    \label{eq3.6}
\begin{align}
    \reW_\gl(\xi,\eta)
        &= r^D_\gl(\xi,\eta) + \sum_{k,m=1}^n e_{\gl,k}(\xi)\,2w_m
            \,\frac{1}{\gb+\sgl}\,e_{\gl,m}(\eta)  \label{eq3.6a}\\
        &= r_\gl(\xi,\eta) + \sum_{k,m=1}^n e_{\gl,k}(\xi)\,\SeW_{km}(\gl)
                            \,\frac{1}{\sgl}\, e_{\gl,m}(\eta),\label{eq3.6b}
\end{align}
with the scattering matrix $\SeW$
\begin{equation}    \label{eq3.6c}
    \SeW_{km}(\gl) = 2\,\frac{\sgl}{\gb+\sgl}\,w_m - \gd_{km},\qquad \gl>0,\,k,m\in\sn.
\end{equation}
\end{subequations}
\end{lemma}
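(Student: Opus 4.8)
The plan is to read the kernel off directly from the resolvent identity~\eqref{eq3.5} and then simplify, since no new probabilistic input is needed beyond what is already in hand. Equation~\eqref{eq3.5} expresses $\ReW_\gl f(\xi)$ as a linear combination of $\RW_\gl f(\xi)$ and $\RW_\gl f(v)$, and by lemma~\ref{lem2.3} and equation~\eqref{eq2.3} both of the latter are integrals of $f$ against the Walsh resolvent kernels $\rW_\gl(\xi,\cdot)$ and $\rW_\gl(v,\cdot)$. Because integration against $f$ is linear and $e_\gl(\xi)$ is a fixed prefactor, I would first record
\begin{equation*}
    \reW_\gl(\xi,\eta) = \rW_\gl(\xi,\eta) - \frac{\gb}{\gb+\sgl}\,e_\gl(\xi)\,\rW_\gl(v,\eta).
\end{equation*}
Here one must keep in mind that the Walsh process underlying $\We$ carries the weights $w_k$ fixed in~\eqref{eq3.4}, so equations~\eqref{eq2.3b} and~\eqref{eq2.4c} are to be read with $w_m$ in place of $b_m$.

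Next I would substitute the explicit Walsh kernels. In the correction term I factor $e_\gl(\xi)$ edgewise as $\sum_k \onel(\xi)\,e_{\gl,k}(\xi)$ and the factor $e^{-\sgl d(v,\eta)}$ appearing in~\eqref{eq2.3b} as $\sum_m \onel[m](\eta)\,e_{\gl,m}(\eta)$, so that, with the understood indicator factors on the edges,
\begin{equation*}
    \frac{\gb}{\gb+\sgl}\,e_\gl(\xi)\,\rW_\gl(v,\eta)
        = \sum_{k,m=1}^n e_{\gl,k}(\xi)\,\frac{2\gb\,w_m}{(\gb+\sgl)\,\sgl}\,e_{\gl,m}(\eta).
\end{equation*}
Inserting the expression~\eqref{eq2.4b} for $\rW_\gl(\xi,\eta)$ and pulling $1/\sgl$ out of the double sum reduces the whole matter to the scalar identity
\begin{equation*}
    2w_m - \frac{2\gb\,w_m}{\gb+\sgl} = 2\,\frac{\sgl}{\gb+\sgl}\,w_m,
\end{equation*}
which converts the Walsh coefficient $\SW_{km}=2w_m-\gd_{km}$ into $\SeW_{km}(\gl)=2\frac{\sgl}{\gb+\sgl}w_m-\gd_{km}$. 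This establishes representation~\eqref{eq3.6b} together with the scattering matrix~\eqref{eq3.6c}.

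Finally, to pass to the first representation~\eqref{eq3.6a}, I would use $r^D_\gl = r_\gl - r_{v,\gl}$ from~\eqref{eq1.22} and read off from~\eqref{eq1.22b} that $r_{v,\gl}(\xi,\eta)=\sum_{k,m}\gd_{km}\frac{1}{\sgl}e_{\gl,k}(\xi)\,e_{\gl,m}(\eta)$. Splitting the coefficient as
\begin{equation*}
    \frac{\SeW_{km}(\gl)}{\sgl} = \frac{2w_m}{\gb+\sgl} - \frac{\gd_{km}}{\sgl}
\end{equation*}
shows that the $\gd_{km}$ part of~\eqref{eq3.6b} combines with $r_\gl$ to form $r^D_\gl$, while the remainder is exactly the double sum in~\eqref{eq3.6a}; this proves the equality of the two representations. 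There is no genuine analytic difficulty in any of this: the argument is bookkeeping, and the only points demanding care are the consistent edgewise treatment of the restrictions $e_{\gl,k}$ together with the indicator factors, and the substitution of the correct Walsh weights $w_m$ for the $b_m$ in the Section~\ref{sect2} formulas for the process actually used here.
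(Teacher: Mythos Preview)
Your proposal is correct and follows exactly the approach the paper indicates: the paper simply says that insertion of the Walsh resolvent kernels~\eqref{eq2.3} and~\eqref{eq2.4} into~\eqref{eq3.5} immediately yields the result, and your proof carries out this substitution and simplification in full detail, including the passage between the two representations via $r^D_\gl = r_\gl - r_{v,\gl}$.
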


\begin{remark}  \label{rem3.5}
Note that in contrast to the case of the Walsh process, this time the scattering
matrix is not constant with respect to $\gl>0$. Also, when $\gb=0$,
formula~\eqref{eq2.4c} is recovered, as it should be. In analogy with the discussion
in remark~\ref{rem2.4}, the boundary conditions for the elastic Walsh process is
given by the matrices
\begin{equation*}
    A^e = \begin{pmatrix}
             0&0&0&\ldots&0&\gb\\
             1&-1&0&\ldots&0&0\\
             0&1&-1&\ldots&0&0\\
             0&0&1&\ldots &0&0\\
             \vdots&\vdots&\vdots&\ddots&\vdots&\vdots\\
             0&0&0&\ldots&1&-1
         \end{pmatrix},\ 
    B^e=\begin{pmatrix}
            w_1&w_2&w_3&\ldots&w_{n-1}&w_n\\
            0&0&0&\ldots&0&0\\
            0&0&0&\ldots&0&0\\
            0&0&0&\ldots&0&0\\
            \vdots&\vdots&\vdots&\ddots&\vdots&\vdots\\
            0&0&0&\ldots&0&0
        \end{pmatrix},
\end{equation*}
such that
\begin{align*}
    \SeW(\gl) &= S_{A^e,B^e}(E=-2\gl)\\
              &= - (A^e+\sgl B^e)^{-1}\,(A^e-\sgl B^e).
\end{align*}
Observe that for $k$, $m\in\{1,\dotsc,n\}$ the matrix element $\SeW_{km}(\gl)$
of the scattering matrix is obtained from the resolvent kernel as
\begin{equation*}
    \SeW_{km}(\gl)
        = \sgl\,\lim_{\xi,\,\eta\to v}\bigl(\reW_\gl(\xi,\eta) - r_\gl(\xi,\eta)\bigr),
\end{equation*}
where the limit on the right hand side is taken in such a way that $\xi$, $\eta$ converge
to $v$ along the edges $l_k$, $l_m$ respectively. $\SeW_{km}(\gl)$ in turn fixes the
data $w_m$ and $\gb$, e.g., via the behavior for large $\gl$, that is from the behavior
at ``large energies''
\begin{equation*}
    w_m = \frac{1}{2}\bigl(\gd_{km} + \lim_{\gl'\ua\infty} \SeW_{km}(\gl')\bigr),
            \quad \text{for all $k\in\sn$},
\end{equation*}
and
\begin{equation*}
    \gb = \sgl\Bigl(\frac{\gd_{km}+\lim_{\gl'\ua\infty}\SeW_{km}(\gl')}
            {\gd_{km}+\SeW_{mm}(\gl)}-1\Bigr),\quad \text{for all $\gl$, and all $k$, $m\in\sn$}.
\end{equation*}
Alternatively, the data can be obtained from the small $\gl$ behavior, that is the
threshold behavior, of the scattering matrix, since from
\begin{equation*}
    \frac{w_m}{\gb}
        = \lim_{\gl \da 0} \frac{1}{2\sgl}\bigl(\SeW_{km}(\gl)+\gd_{km}\bigr)
            \qquad \text{for all $k\in\sn$},
\end{equation*}
we obtain
\begin{equation*}
    \gb^{-1} = \frac{1}{2\sgl}\Bigl(\sum_{m=1}^n\SeW_{km}(\gl)+1\Bigr)
            \qquad \text{for all $k\in\sn$},
\end{equation*}
and therefore
\begin{equation*}
    w_m = \frac{\lim_{\gl\da 0} \gl^{-1/2}\bigl(\SeW_{km}(\gl)+\gd_{km}\bigr)}
            {\lim_{\gl\da 0} \gl^{-1/2}\Bigl(\sum_m\SeW_{k'm}(\gl)+1\Bigr)}
            \qquad \text{for all $k$, $k'\in\sn$}.
\end{equation*}

Furthermore we remark that in the context of quantum mechanics in the self-adjoint
case $w_k=1/n$, $\kn$, the boundary conditions of the elastic Walsh process are
interpreted as the presence of a $\gd$--potential of strength $\gb$ at the vertex.
\end{remark}

In order to compute expressions for the transition kernel of the elastic Walsh
process, we use the following two inverse Laplace transforms which follow from
formulae (5.3.4) and (5.6.12) in~\cite{ErMa54a} (cf.\ also appendix~\ref{0_app_LTHK}
in~\cite{BMMG0}) ($\gl>0$, $t\ge 0$, $x\ge0$):
\begin{align}
    \frac{\sgl}{\gb+\sgl}\quad
        &\mathop{\longrightarrow}^{\cL^{-1}}\quad
            \gep_0(dt) - \gb\Bigl(\frac{1}{\sqrt{2\pi t}}-\frac{\gb}{2}e^{\gb^2 t/2}
            \erfc\Bigl(\gb\sqrt{\frac{t}{2}}\Bigr)\Bigr)\,dt,\label{eq3.7}\\
    \frac{1}{\gb+\sgl}\,e^{-\sgl x}\quad
        &\mathop{\longrightarrow}^{\cL^{-1}}\quad
            g(t,x) - \frac{\gb}{2}\,e^{\gb x+ \gb^2 t/2}
            \erfc\Bigl(\frac{x}{\sqrt{2t}}+\gb\sqrt{\frac{t}{2}}\Bigr).\label{eq3.8}
\end{align}
Then the inverse Laplace transform of the scattering matrix $\SeW$ is given by
the following measures on $\R_+$:
\begin{equation}    \label{eq3.9}
 \begin{split}
    \seW_{km}(dt)
        = (2w_m-\gd_{km})\,\gep_0(dt) &- 2w_m\gb\,\frac{1}{\sqrt{2\pi t}}\,dt\\
        & + w_m\gb^2 e^{\gb^2t/2}\,\erfc\Bigl(\gb\sqrt{\frac{t}{2}}\Bigr)\,dt,
\end{split}
\end{equation}
with $k$, $m\in\sn$. Moreover, for $t>0$, $x\ge 0$, let us introduce
\begin{equation}    \label{eq3.10}
    g_{\gb,0}(t,x) = g(t,x) - \frac{\gb}{2}\,e^{\gb x+ \gb^2 t/2}
            \erfc\Bigl(\frac{x}{\sqrt{2t}}+\gb\sqrt{\frac{t}{2}}\Bigr).
\end{equation}

\begin{lemma}   \label{lem3.6}
For $t>0$, $\xi$, $\eta\in\cG$, the transition density $\peW$ of the elastic Walsh process
is given by
\begin{equation} \label{eq3.11}
    \peW(t,\xi,\eta)
        = p^D(t,\xi,\eta) + \sum_{k,m=1}^n \onel(\xi)\, 2w_m\,g_{\gb,0}\bigl(t,d_v(\xi,\eta)\bigr)\,
            \onel[m](\eta),
\end{equation}
and alternatively by
\begin{equation}    \label{eq3.12}
\begin{split}
    \peW(t,\xi,\eta)
        = p(t,\xi,\eta) + \sum_{k,m=1}^n &\onel(\xi)\,\Bigl(\int_0^t P_\xi(\TW_v\in ds)\\
            &\times \Bigl(\seW_{km}*g\bigl(\cdot,d(v,\eta)\bigr)\Bigr)(t-s)\,
                \Bigr)\,\onel[m](\eta),
\end{split}
\end{equation}
where $*$ denotes convolution.
\end{lemma}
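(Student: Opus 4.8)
The plan is to obtain both formulas by inverting, in the variable $\gl$, the Laplace transform of the resolvent kernel $\reW_\gl$ established in Lemma~\ref{lem3.4}, starting from its two representations \eqref{eq3.6a} and \eqref{eq3.6b} respectively. Since $\reW_\gl(\xi,\eta)=\int_0^\infty e^{-\gl t}\,\peW(t,\xi,\eta)\,dt$ and the Laplace transform determines the density uniquely, it is enough to invert each summand on the right-hand sides.

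For \eqref{eq3.11} I would start from \eqref{eq3.6a}. The first term $r^D_\gl$ is, by construction (cf.\ the remark after \eqref{eq1.22b}), the Laplace transform of the Dirichlet heat kernel $p^D$. For the second term I would use the factorization $e_{\gl,k}(\xi)\,e_{\gl,m}(\eta)=\onel(\xi)\,e^{-\sgl\,d_v(\xi,\eta)}\,\onel[m](\eta)$, which follows from \eqref{eq1.14} and the definition \eqref{eq1.15} of $d_v$. Each summand then reads $\onel(\xi)\,2w_m\,\frac{1}{\gb+\sgl}\,e^{-\sgl\,d_v(\xi,\eta)}\,\onel[m](\eta)$, and formula \eqref{eq3.8} with $x=d_v(\xi,\eta)$ identifies its inverse Laplace transform as $\onel(\xi)\,2w_m\,g_{\gb,0}\bigl(t,d_v(\xi,\eta)\bigr)\,\onel[m](\eta)$. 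Summing over $k$, $m$ gives \eqref{eq3.11}.

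For \eqref{eq3.12} I would start from \eqref{eq3.6b} and use the convolution theorem for Laplace transforms. The first term $r_\gl$ inverts to $p$ (cf.\ \eqref{eq1.22a} and \eqref{eq1.16}). In the second term I would split $e_{\gl,k}(\xi)\,\SeW_{km}(\gl)\,\frac{1}{\sgl}\,e_{\gl,m}(\eta)$ into a product of three Laplace transforms: $e^{-\sgl\,d(\xi,v)}$, which by \eqref{DFT}--\eqref{LT} is the transform of the first-passage density $P_\xi(\TW_v\in ds)$; the matrix element $\SeW_{km}(\gl)$, which by \eqref{eq3.7}--\eqref{eq3.8} is the transform of the measure $\seW_{km}(dt)$ of \eqref{eq3.9}; and $\frac{1}{\sgl}\,e^{-\sgl\,d(\eta,v)}$, which is the transform of $g(t,d(v,\eta))$. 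As a product of Laplace transforms inverts to the convolution of the corresponding objects on $\R_+$, the summand inverts to $\onel(\xi)\bigl(P_\xi(\TW_v\in\cdot)*\seW_{km}*g(\cdot,d(v,\eta))\bigr)(t)\,\onel[m](\eta)$, which upon grouping $\seW_{km}*g(\cdot,d(v,\eta))$ first is exactly the term in \eqref{eq3.12}.

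The Laplace-transform identifications and the indicator bookkeeping are routine. The step needing the most care is the convolution argument for \eqref{eq3.12}, because $\seW_{km}$ is a genuine signed measure carrying an atom at the origin, namely the part $(2w_m-\gd_{km})\,\gep_0(dt)$ of \eqref{eq3.9}. I would therefore split $\seW_{km}=(2w_m-\gd_{km})\,\gep_0+\mu_{km}$ with $\mu_{km}$ absolutely continuous, treat the atomic part (whose convolution with $\gep_0$ acts as the identity) separately from the continuous part, and invoke Fubini to justify that the factored transforms recombine into $\reW_\gl$. Once both inversions are carried out, the consistency of \eqref{eq3.11} and \eqref{eq3.12} is automatic, as both are the inverse Laplace transform of the single kernel $\reW_\gl$.
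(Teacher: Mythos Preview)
Your proposal is correct and matches the paper's approach: the paper sets up the inverse Laplace transforms \eqref{eq3.7}--\eqref{eq3.10} immediately before stating Lemma~\ref{lem3.6} without explicit proof, making clear that the lemma follows by applying these inversions to the two forms \eqref{eq3.6a} and \eqref{eq3.6b} of the resolvent kernel from Lemma~\ref{lem3.4}. Your added care in handling the atomic part of the signed measure $\seW_{km}$ in the convolution argument for \eqref{eq3.12} is a reasonable elaboration of a step the paper leaves implicit.
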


\section{The Walsh Process with a Sticky Vertex}  \label{sect4}
In this section  we construct Brownian motions on $\cG$ with $a=0$ in the boundary
condition~\eqref{eq1.1}.

Consider the Walsh process $W$ on $\cG$ from section~\ref{sect2} together with
a right continuous, complete filtration $\cFW$, relative to which it is strongly
Markovian. Furthermore, we denote its local time at the vertex $v$ by $L^w$ (cf.\
section~\ref{sect3}).

Again we follow closely the recipe given by It\^o and McKean in~\cite{ItMc63} (cf.\
also \cite[Section~6.2]{Kn81}) for the case of a Brownian motion on the half line.
For $\gg\ge 0$ introduce a new time scale $\tau$ by
\begin{equation}    \label{eq4.1}
    \tau^{-1}: t\mapsto t + \gg L^w_t,    \qquad t\ge 0.
\end{equation}
Since $L^w$ is non-decreasing, $\tau^{-1}$ is strictly increasing. Moreover, we have
$\tau^{-1}(0)=0$ and $\lim_{t\to+\infty}\tau^{-1}(t) = +\infty$, which implies that
$\tau$ exists, and is strictly increasing from $\R+$ onto $\R_+$, too. As is shown
in~\cite[p.~160]{Kn81}, the additivity of $L^w$ entails the additivity of $\tau$ on
its own time scale, i.e.:

\begin{lemma}   \label{lem4.1}
For all $s$, $t\ge 0$, a.s.\ the following formula holds true
\begin{equation}    \label{eq4.2}
    \tau(s+t) = \tau(s) + \tau(t)\comp \theta_{\tau(s)}.
\end{equation}
\end{lemma}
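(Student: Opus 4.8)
The plan is to transfer additivity from $\LW$ to $\tau$ by exploiting that $\tau$ and the map $\phi := \tau^{-1}\colon t\mapsto t+\gg\LW_t$ are mutually inverse, strictly increasing bijections of $\R_+$. First I would record that $\phi$ is itself additive: using the additivity of $\LW$ (it is the Walsh local time at $v$, a PCHAF, so $\LW_{s+t}=\LW_s+\LW_t\comp\theta_s$; cf.~\eqref{eq1.9}),
\begin{equation*}
    \phi(s+t) = (s+t)+\gg\LW_{s+t} = \bigl(s+\gg\LW_s\bigr) + \bigl(t+\gg\LW_t\comp\theta_s\bigr) = \phi(s) + \phi(t)\comp\theta_s ,
\end{equation*}
where I used $\bigl(\phi(t)\comp\theta_s\bigr)(\go)=t+\gg\LW_t(\theta_s\go)$. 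Since $\LW$ is a PCHAF this identity holds simultaneously for all $s,t\ge 0$ off a single null set, and on that set each path $t\mapsto\phi(t)(\go)$ is continuous and strictly increasing, hence a homeomorphism of $\R_+$ with continuous inverse $\tau(\,\cdot\,)(\go)$.

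The core step is to apply $\phi$ to both sides of the claimed formula and invoke injectivity of $\phi(\,\cdot\,)(\go)$. Fix $s,t\ge 0$ and a good $\go$, and set $a=\tau(s)(\go)$ and $b=\tau(t)(\theta_a\go)=\bigl(\tau(t)\comp\theta_{\tau(s)}\bigr)(\go)$; both are finite because $\phi$ maps $\R_+$ onto $\R_+$. The left-hand side gives $\phi\bigl(\tau(s+t)\bigr)(\go)=s+t$. For the right-hand side, additivity of $\phi$ at the times $a$ and $b$ yields
\begin{equation*}
    \phi(a+b)(\go) = \phi(a)(\go) + \phi(b)(\theta_a\go),
\end{equation*}
and here $\phi(a)(\go)=\phi\bigl(\tau(s)(\go)\bigr)(\go)=s$ by the inversion relation at $\go$, while $\phi(b)(\theta_a\go)=\phi\bigl(\tau(t)(\theta_a\go)\bigr)(\theta_a\go)=t$ by the inversion relation along the shifted path $\theta_a\go$. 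Thus $\phi(a+b)(\go)=s+t=\phi\bigl(\tau(s+t)\bigr)(\go)$, and since $\phi(\,\cdot\,)(\go)$ is strictly increasing, hence injective, I conclude $\tau(s+t)(\go)=a+b$, which is precisely~\eqref{eq4.2}.

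The one point that needs care — and the main obstacle — is the legitimacy of feeding the random time $b=\tau(t)\comp\theta_{\tau(s)}$ into the additivity relation for $\phi$, and of invoking the inversion identity along the shifted path $\theta_a\go$. Both are handled by perfectness of $\LW$: off a single null set the relation $\LW_{p+q}(\go)=\LW_p(\go)+\LW_q(\theta_p\go)$ holds as a pointwise identity in \emph{all} $p,q\ge 0$, so it may be evaluated at the particular values $a,b$; moreover this same identity shows the shifted path $t\mapsto\LW_t(\theta_a\go)=\LW_{t+a}(\go)-\LW_a(\go)$ is continuous and nondecreasing whenever $\LW_{\,\cdot\,}(\go)$ is, so $\phi(\,\cdot\,)(\theta_a\go)$ is again a strictly increasing continuous bijection whose inverse is $\tau(\,\cdot\,)(\theta_a\go)$. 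Hence the exceptional set may be chosen independently of $s,t$, and the argument runs pathwise with no measurability complications beyond what perfectness already provides.
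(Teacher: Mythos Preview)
Your argument is correct and is essentially the standard one: transfer additivity from the PCHAF $\LW$ to the time change $\phi=\tau^{-1}$, then push it through to $\tau$ via the pathwise inversion relation and strict monotonicity, with perfectness guaranteeing a single null set independent of $s,t$. The paper itself does not spell out a proof but refers to~\cite[p.~160]{Kn81}, where precisely this line of reasoning is carried out for the half-line case.
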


It is easily checked that for every $t\ge 0$, $\tau(t)$ is an $\cFW$--stopping time,
and since $\tau$ is increasing, we obtain the subfiltration $\cFsW = (\cFsW_t,\,t\ge
0)$ of $\cFW$ defined by $\cFsW_t = \cFW_{\tau(t)}$, $t\in\R_+$. Moreover, we set
$\cFW_\infty = \gs(\cFW_t,\,t\in\R_+)$ and $\cFsW_\infty = \gs(\cFsW_t,\,t\in\R_+)$,
and find $\cFsW_\infty\subset \cFW_\infty$. Standard calculations show that the
completeness and the right continuity of $\cFW$ entail the same properties for
$\cF^s$. (For details of the argument in the case where $\cG=\R_+$ we refer the
interested reader to section~\ref{0_sect_st} of~\cite{BMMG0}.)

Define a stochastic process $\Ws$ on $\cG$, called \emph{Walsh process with sticky
vertex}, by
\begin{equation}    \label{eq4.3}
    \Ws_t = W_{\tau(t)},\qquad t\in\R_+.
\end{equation}
Observe that when $W$ is away from the vertex, $L^w$ is constant, and therefore in
this case $\tau^{-1}$ grows with rate $1$. On the other hand, when $W$ is at the
vertex, $\tau^{-1}$ grows faster than with rate $1$, and therefore $\tau$ increases
slower than the deterministic time scale $t\mapsto t$. Thus $\Ws$ ``experiences a
slow down in time'' until $W$ has left the vertex. In this heuristic sense the
vertex is ``sticky'' for $\Ws$, because it spends more time there than $W$.

Note that because $L^w$ has continuous paths, $\tau^{-1}$ and therefore also $\tau$
are pathwise continuous. Consequently, $\Ws$ has continuous sample paths. Since $W$
has continuous paths, it is a measurable process, and hence for every $t\ge 0$,
$W_{\tau(t)}$ is $\cFW_{\tau(t)}$--measurable, that is, $\Ws$ is $\cFsW$--adapted.
Set $\theta^s_t = \theta_{\tau(t)}$. With the additivity~\eqref{eq4.2} of $\tau$ we
immediately find
\begin{equation} \label{sshift}
    W^s_s\comp\theta^s_t = W^s_{s+t},\qquad s,\,t\in\R_+.
\end{equation}
Thus $\theta^s=(\theta^s_t,\,t\in\R_+)$ is a family of shift operators for $\Ws$.

Next we show the strong Markov property of $\Ws$ relative to $\cFsW$ following the
argument sketched briefly in section~6.2 of~\cite{Kn81} for the case $\cG=\R_+$.
First we prove the simple Markov property of $\Ws$ with respect to $\cFsW$. To this
end, let $s$, $t\ge 0$, $\xi\in\cG$, and $C\in\cB(\cG)$. Then we get
with~\eqref{sshift}
\begin{align*}
    P_\xi\bigl(\Ws_{t+s}\in C\cond \cFsW_t\bigr)
        &= P_\xi\bigl(\Ws_s\comp\theta^s_t\in C\cond \cFsW_t\bigr)\\
        &= P_\xi\bigl(W_{\tau(s)}\comp\theta_{\tau(t)}\in C\cond \cFW_{\tau(t)}\bigr)\\
        &= P_{W_{\tau(t)}} \bigl(W_{\tau(s)}\in C\bigr)\\
        &= P_{\Ws_t}\bigl(\Ws_s\in C\bigr),
\end{align*}
where we used the strong Markov property of $W$ with respect to $\cFW$. As a next
step we prove that $\Ws$ has the strong Markov property for its hitting time $H^s_v$
of the vertex. By construction, $\Ws$ and $W$ have the same paths up to the hitting
time of the vertex, and in particular  $H^s_v$ is also the hitting time of the vertex
by $W$, that is, $H^s_v=H_v$. Moreover, since $L^w(H_v)=0$, we get that
$\tau^{-1}(H_v) = H_v = \tau(H_v)$, as well as $\theta^s(H_v) = \theta(H_v)$.
Assume now that $t\ge 0$, $\xi\in\cG$, and $C\in\cB(\cG)$. Then on $\{H_v<+\infty\}$
we can compute with the strong Markov property of $W$ as follows
\begin{align*}
    P_\xi\bigl(\Ws_{t+H_v}\in C \cond \cFW_{H_v}\bigr)
        &= P_\xi\bigl(\Ws_t\comp\theta^s_{H_v}\in C\cond \cFW_{H_v}\bigr)\\
        &= P_\xi\bigl(W_{\tau(t)}\comp\theta_{H_v}\in C\cond  \cFW_{H_v}\bigr)\\
        &= P_v\bigl(W_{\tau(t)}\in C\bigr)\\
        &= P_v\bigl(\Ws_t\in C\bigr).
\end{align*}
It is readily checked that $\cFsW_{H_v}\subset\cFW_{H_v}$, and therefore we get
in particular the strong Markov property of $W^s$ with respect to $H^s_v=H_v$ in the form
\begin{equation}    \label{sSMT_v}
    P_\xi\bigl(\Ws_{t+H^s_v}\in C \cond \cFsW_{H_v}\bigr)
        = P_v\bigl(\Ws_t\in C\bigr).
\end{equation}
Finally, with the strong Markov property of the standard one-dimensional Brownian
motions on every edge and the strong Markov property~\eqref{sSMT_v} just proved
we can apply the arguments of the proof of theorem~\Iref{thm_4_3} to conclude
that $W^s$ is a Feller process. Hence it is strongly Markovian relative to the
filtration $\cF^s$.

By construction, $W^s$ is up to time $H^s_v$ equivalent to a standard
one-dimensional Brownian motion, and it has continuous sample paths. Hence,
altogether we have shown that $\Ws$ is a Brownian motion on $\cG$ in the sense of
definition~\Iref{def_3_1}.

Now we want to compute the generator of $\Ws$, and first we argue that $v$ is not a
trap for $\Ws$. To this end, we may consider $W$ as constructed from a standard
Brownian motion $B$ as described in section~\ref{sect2}. Let $Z$ denote the zero set
of $B$. Given $s\ge 0$ we can choose $t_0\ge s$ in the complement $Z^c$ of $Z$.
Consider $t = \tau^{-1}(t_0)$, i.e., $t = t_0 + \gg L^w_{t_0}$. Obviously $t\ge s$,
and $\tau(t)\in Z^c$. Therefore $B_{\tau(t)} \ne 0$, and consequently
$\Ws_t=W_{\tau(t)}\ne v$.

\begin{theorem} \label{thm4.2}
Consider the boundary condition~\eqref{eq1.1} with $a=0$, $c\in(0,1)$,
and $b\in [0,1]^n$. Set
\begin{equation}    \label{eq4.4}
    w_k = \frac{b_k}{1-c},\,\kn,\qquad \gg = \frac{c}{1-c},
\end{equation}
and let $\Ws$ be the sticky Walsh process as constructed above with these parameters.
Then the generator $A^s$ of $\Ws$ is $1/2$ times the second derivative on $\cG$ with domain
consisting of those $f\in\Cii$ which satisfy condition~\eqref{eq1.1b}.
\end{theorem}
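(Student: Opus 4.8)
The plan is to compute $A^s f(v)$ in the Dynkin form~\eqref{eq2.1}, now applied to $\Ws$, and to match the outcome against the value $\frac12 f''(v)$ dictated by lemma~\Iref{lem_5_2}; equating the two produces precisely the boundary condition~\eqref{eq1.1b} with $a=0$. Fix $f\in\cD(A^s)$, and let $\TsW_{v,\gep}$ denote the hitting time by $\Ws$ of the complement of $B_\gep(v)$, while $\TW_{v,\gep}$ is the corresponding time for $W$. Since $\Ws_t = W_{\tau(t)}$ with $\tau$ continuous and strictly increasing, $\Ws$ reaches distance $\gep$ from $v$ exactly when $W$ does, whence, using~\eqref{eq4.1},
\begin{equation*}
    \TsW_{v,\gep} = \tau^{-1}\bigl(\TW_{v,\gep}\bigr)
        = \TW_{v,\gep} + \gg\,L^w\bigl(\TW_{v,\gep}\bigr),
    \qquad \Ws\bigl(\TsW_{v,\gep}\bigr) = W\bigl(\TW_{v,\gep}\bigr).
\end{equation*}
In particular the exit position of $\Ws$ has the same law as that of $W$, so exactly as in section~\ref{sect2} we obtain $E_v\bigl(f(\Ws(\TsW_{v,\gep}))\bigr) = \sum_{k=1}^n w_k f_k(\gep)$.

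The denominator of~\eqref{eq2.1} is computed term by term. By lemma~\ref{lem2.1}, $E_v(\TW_{v,\gep}) = \gep^2$. For the local-time contribution, recall from the proof of lemma~\ref{lem3.1} that under $P_v$ the variable $L^w(\TW_{v,\gep})$ has the same law as $L^B(H^B_{\{-\gep,\gep\}})$ for the underlying Brownian motion; hence lemma~\ref{lem1.3} gives that it is exponentially distributed with mean $\gep$, so that $E_v\bigl(L^w(\TW_{v,\gep})\bigr) = \gep$. Consequently
\begin{equation*}
    E_v\bigl(\TsW_{v,\gep}\bigr) = \gep^2 + \gg\,\gep = \gep(\gep+\gg).
\end{equation*}

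For the numerator I Taylor expand. Since $f\in\Cii$ its second derivative is continuous on $\cG$, so $f''(v_k) = f''(v)$ for every $\kn$; together with $\sum_k w_k = 1$ (valid by~\eqref{eq4.4} and~\eqref{eq1.1a}) this yields $\sum_k w_k f_k(\gep) - f(v) = \gep\sum_k w_k f'(v_k) + \tfrac{\gep^2}{2} f''(v) + o(\gep^2)$. Dividing by $E_v(\TsW_{v,\gep})=\gep(\gep+\gg)$ and letting $\gep\da 0$ (note $\gg>0$ since $c\in(0,1)$), the Dynkin form~\eqref{eq2.1} gives
\begin{equation*}
    A^s f(v) = \frac{1}{\gg}\sum_{k=1}^n w_k f'(v_k).
\end{equation*}
On the other hand $A^s f(v) = \tfrac12 f''(v)$ by lemma~\Iref{lem_5_2}. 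Equating and inserting $w_k = b_k/(1-c)$, $\gg = c/(1-c)$ from~\eqref{eq4.4} yields $\tfrac{c}{2} f''(v) = \sum_{k=1}^n b_k f'(v_k)$, which is~\eqref{eq1.1b} with $a=0$. As $\Ws$ has already been shown to be a Brownian motion on $\cG$, Feller's theorem~\Iref{thm_5_3} ensures its generator is $\tfrac12$ times the second derivative with domain of the form~\eqref{eq1.1}; the computation above pins down the parameters and hence characterizes $\cD(A^s)$ as claimed.

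I expect the main obstacle to be the very first step: justifying $\TsW_{v,\gep} = \tau^{-1}(\TW_{v,\gep})$ together with the equality of exit positions, since this requires handling the continuous time change $\tau$ carefully at a random exit time, and confirming that the law-transfer of $(\TW_{v,\gep}, L^w(\TW_{v,\gep}))$ to the Brownian quantities used in lemma~\ref{lem3.1} legitimately delivers $E_v(L^w(\TW_{v,\gep})) = \gep$. The Taylor expansion and the final matching with lemma~\Iref{lem_5_2} are then routine.
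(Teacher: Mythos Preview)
Your proposal is correct and follows essentially the same route as the paper: both compute $A^s f(v)$ via Dynkin's formula, identify $\TsW_{v,\gep}=\TW_{v,\gep}+\gg L^w(\TW_{v,\gep})$ (the paper isolates this as lemma~\ref{lem4.3}), obtain $E_v(\TsW_{v,\gep})=\gep^2+\gg\gep$ from lemma~\ref{lem2.1} and lemma~\ref{lem1.3} (the paper records this as corollary~\ref{cor4.4}), and then read off the boundary condition by matching the limit with $\tfrac12 f''(v)$. Your anticipated ``main obstacle'' is exactly what the paper spells out in the proof of lemma~\ref{lem4.3}, showing both inequalities $\TsW_{v,\gep}\lessgtr\tau^{-1}(\TW_{v,\gep})$ via continuity and monotonicity of $\tau$; your one-line justification is the right intuition but the paper's two-sided argument is what makes it airtight.
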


Before we prove theorem~\ref{thm4.2} we first prepare two preliminary results. Let
$\gep>0$, and let $\TsW_{v,\gep}$ denote the hitting time of the complement of the
open ball $B_\gep(v)$ with radius $\gep$ and center $v$ by $\Ws$. Recall that
$\TW_{v,\gep}$ denotes the corresponding first hitting time for the Walsh process
$W$.

\begin{lemma}   \label{lem4.3}
$P_v$--a.s., the formula
\begin{equation}    \label{eq4.5}
    \TsW_{v,\gep} = \TW_{v,\gep} + \gg\,L^w_{\TW_{v,\gep}}
\end{equation}
holds true.
\end{lemma}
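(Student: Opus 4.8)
The plan is to observe that the claimed identity is nothing other than $\TsW_{v,\gep} = \tau^{-1}(\TW_{v,\gep})$, since by the definition~\eqref{eq4.1} of the time change we have $\tau^{-1}(t) = t + \gg\LW_t$, so that $\tau^{-1}(\TW_{v,\gep})$ equals the right hand side of~\eqref{eq4.5} verbatim. Equivalently, applying $\tau$ to both sides, it suffices to prove $\tau(\TsW_{v,\gep}) = \TW_{v,\gep}$. This is an instance of the general principle that, under a continuous strictly increasing time change, the hitting time of a set by the time-changed process --- measured on the new clock --- is obtained by pushing the old-clock hitting time through $\tau^{-1}$. Because everything here is pathwise, I would give a direct self-contained argument rather than invoke a general theorem.

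First I would fix a path (under $P_v$) and abbreviate $s^* = \TW_{v,\gep}$ and $t^* = \tau^{-1}(s^*) = s^* + \gg\LW_{s^*}$, and then establish $\TsW_{v,\gep} = t^*$ by two one-sided inequalities. Throughout I would use that, by the a.s.\ continuity and strict monotonicity of $\LW$, the map $t\mapsto\tau^{-1}(t)=t+\gg\LW_t$ is a continuous strictly increasing bijection of $\R_+$ onto itself, so its inverse $\tau$ is also a continuous strictly increasing bijection and satisfies $\tau(\tau^{-1}(s))=s$ for all $s$.

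For the upper bound: since $\tau(t^*)=\tau(\tau^{-1}(s^*))=s^*$, one has $\Ws_{t^*}=W_{\tau(t^*)}=W(s^*)=W(\TW_{v,\gep})$. As $B_\gep(v)^c$ is closed and $W$ has continuous paths, $W(\TW_{v,\gep})\in B_\gep(v)^c$, so $\Ws$ already lies in the target set at time $t^*$, giving $\TsW_{v,\gep}\le t^*$. For the lower bound: for every $t$ with $0\le t<t^*$, strict monotonicity of $\tau$ yields $\tau(t)<\tau(t^*)=s^*=\TW_{v,\gep}$, hence $\Ws_t=W_{\tau(t)}$ lies in the open ball $B_\gep(v)$ and not in its complement; therefore $\Ws$ does not reach $B_\gep(v)^c$ before time $t^*$, and $\TsW_{v,\gep}\ge t^*$. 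Combining the two bounds gives $\TsW_{v,\gep}=t^*$, which is exactly~\eqref{eq4.5}.

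The main points needing care, rather than true obstacles, are three. First, one must justify that $\tau$ is an exact two-sided inverse of $\tau^{-1}$; this rests on the pathwise continuity of the local time $\LW$ (already assumed for $W$), which makes $\tau^{-1}$ a homeomorphism of $\R_+$. Second, one must know the hitting time is actually attained, i.e.\ $W(\TW_{v,\gep})$ genuinely belongs to $B_\gep(v)^c$; this is where closedness of the complement of the open ball together with path-continuity of $W$ enters. Third, one should note $t^*>0$, so that $t^*$ is an admissible candidate in the infimum defining $\TsW_{v,\gep}$: starting from $v$, the continuous process $W$ needs strictly positive time to travel distance $\gep$, so $\TW_{v,\gep}>0$ and hence $t^*\ge\TW_{v,\gep}>0$ $P_v$--a.s.
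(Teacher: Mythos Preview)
Your proof is correct and follows essentially the same route as the paper: set $t^*=\tau^{-1}(\TW_{v,\gep})$, then show $\TsW_{v,\gep}\le t^*$ because $\Ws_{t^*}=W(\TW_{v,\gep})$ lies on the sphere of radius $\gep$, and $\TsW_{v,\gep}\ge t^*$ by strict monotonicity of $\tau$. One small slip: you write ``strict monotonicity of $\LW$'', but the local time is only non-decreasing (it is constant while $W$ is away from $v$); what makes $\tau^{-1}(t)=t+\gg\LW_t$ strictly increasing is the identity term $t$, and this is all you need.
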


\begin{proof}
Let $W$, and therefore also $\Ws$, start in the vertex $v$. Since $\Ws$ and $W$
have continuous paths with infinite lifetime we have for all $\gg\ge 0$
\begin{equation*}
    \TsW_{v,\gep} = \inf\bigl\{t>0,\, d(v,W_{\tau(t)})=\gep\bigr\},
\end{equation*}
and in particular for $\gg=0$,
\begin{equation*}
    \TW_{v,\gep} = \inf\bigl\{t>0,\, d(v,W_t)=\gep\bigr\}.
\end{equation*}
Moreover, as argued above, both infima are a.s.\ finite. Set
\begin{equation*}
    \gs = \TW_{v,\gep} + \gg\,L^w_{\TW_{v,\gep}}.
\end{equation*}
Then $\tau(\gs) = \TW_{v,\gep}$, and therefore
\begin{align*}
    d\bigl(v, \Ws_\gs\bigr)
        &= d\bigl(v, W_{\tau(\gs)}\bigr)\\
        &= d\bigl(v, W_{\TW_{v,\gep}}\bigr)\\
        &= \gep.
\end{align*}
Consequently we get $\TsW_{v,\gep} \le \gs$. To derive the converse inequality
we remark that
\begin{align*}
    \gep
        &= d\bigl(v, \Ws_{\TsW_{v,\gep}}\bigr)\\
        &= d\bigl(v, W_{\tau(\TsW_{v,\gep})}\bigr),
\end{align*}
which implies
\begin{equation*}
    \tau\bigl(\TsW_{v,\gep}\bigr)\ge \TW_{v,\gep}.
\end{equation*}
Since $\tau$ is strictly increasing this entails
\begin{equation*}
    \TsW_{v,\gep} \ge \tau^{-1}\bigl(\TW_{v,\gep}\bigr) = \gs,
\end{equation*}
and the proof is finished.
\end{proof}

\begin{corollary}   \label{cor4.4}
For every $\gg\ge 0$,
\begin{equation}    \label{eq4.6}
    E_v\bigl(\TsW_{v,\gep}\bigr)=\gep^2+\gg\gep
\end{equation}
holds.
\end{corollary}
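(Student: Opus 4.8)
The plan is to take expectations on both sides of the pathwise identity established in lemma~\ref{lem4.3}. Since both $\TW_{v,\gep}$ and $L^w_{\TW_{v,\gep}}$ are nonnegative random variables, term-by-term integration is unproblematic, and I obtain
\[
    E_v\bigl(\TsW_{v,\gep}\bigr)
        = E_v\bigl(\TW_{v,\gep}\bigr) + \gg\, E_v\bigl(L^w_{\TW_{v,\gep}}\bigr).
\]
The first expectation on the right is supplied at once by lemma~\ref{lem2.1}, which gives $E_v(\TW_{v,\gep}) = \gep^2$. It therefore remains only to show that $E_v(L^w_{\TW_{v,\gep}}) = \gep$.

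For the second term I would pass, exactly as in the proof of lemma~\ref{lem3.1}, to the standard Brownian motion $B$ on $\R$ underlying the construction of the Walsh process $W$. Under $P_v$ the local time $L^w$ at the vertex coincides pathwise with the local time $L^B$ of $B$ at the origin, while $\TW_{v,\gep}$ has the same law as the hitting time $H^B_{\{-\gep,\gep\}}$ of the two-point set $\{-\gep,\gep\}$ by $B$ under $Q_0$. Consequently the random variable $L^w_{\TW_{v,\gep}}$ under $P_v$ has the same distribution as $L^B(H^B_{\{-\gep,\gep\}})$ under $Q_0$. By lemma~\ref{lem1.3} the latter is exponentially distributed with mean $\gep$, so that $E_v(L^w_{\TW_{v,\gep}}) = \gep$. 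Inserting this together with $E_v(\TW_{v,\gep})=\gep^2$ into the displayed identity yields $E_v(\TsW_{v,\gep}) = \gep^2 + \gg\gep$, as claimed.

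The only step requiring a modicum of care is the identification of the pair $(L^w,\TW_{v,\gep})$ with the corresponding Brownian quantities $(L^B, H^B_{\{-\gep,\gep\}})$, which is what allows lemma~\ref{lem1.3} to be applied. This correspondence, however, is precisely the one already exploited in lemma~\ref{lem3.1}, and it carries over here without change; so I do not expect any genuine obstacle beyond recording these observations.
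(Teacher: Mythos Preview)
Your proposal is correct and follows essentially the same route as the paper: take expectations in lemma~\ref{lem4.3}, invoke lemma~\ref{lem2.1} for the first term, and identify $L^w(\TW_{v,\gep})$ under $P_v$ with $L^B(H^B_{\{-\gep,\gep\}})$ under $Q_0$ so that lemma~\ref{lem1.3} gives mean $\gep$. One small sharpening: since $W$ is built pathwise from $B$, the identity $\TW_{v,\gep}=H^B_{\{-\gep,\gep\}}$ holds pathwise (not merely in law), which together with the pathwise equality $L^w=L^B$ gives the distributional identification you need without appealing to a joint-law argument.
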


\begin{proof}
By construction, the paths of $W$ starting in $v$ hit the complement of $B_\gep(v)$
exactly when the underlying standard Brownian motion $B$ (cf.\ section~\ref{sect2})
starting at the origin hits one of the points $\pm\gep$ on the real line. Thus under $P_v$,
$L^w(\TW_{v,\gep})$ has the same law as $L^B(H^B_{\{-\gep,\gep\}})$ under $P_0$.
Lemma~\ref{lem1.3} states that under $P_0$ this random variable
is exponentially distributed with mean $\gep$. Then equation~\eqref{eq4.6} follows
directly from lemmas~\ref{lem4.3}, and \ref{lem2.1}.
\end{proof}

Given these results, we  come to the

\begin{proof}[Proof of theorem~\ref{thm4.2}]
Let $w_k$, $\kn$, and $\gg$ be defined as in~\eqref{eq4.4}, and note that due to the
condition~\eqref{eq1.1a} on $b_k$, $\kn$, and $c$, we have $w_k\in [0,1]$, $\kn$,
$\sum_k w_k =1$, as well as $\gg>0$. Hence we can construct the associated sticky
Walsh process $\Ws$ as above.

Let $A^s$ denote the generator of $\Ws$ with domain $\cD(A^s)$. Then we
have for $f\in\cD(A^s)$, $A^s f(v) = 1/2 f''(v)$ (cf.\ lemma~\Iref{lem_5_2}). On the
other hand, we can compute $A^s f(v)$ via Dynkin's formula as follows
\begin{align*}
    A^s f(v)
        &= \lim_{\gep\da 0} \frac{E_v\Bigl(f\bigl(\Ws(\TsW_{v,\gep})\bigr)\Bigr)-f(v)}
                                 {E_v\bigl(\TsW_{v,\gep}\bigr)}\\[1ex]
        &= \lim_{\gep\da 0} \frac{\sum_k w_k f_k(\gep) - f(v)}{\gep^2 + \gg\gep},
\end{align*}
where we used corollary~\ref{cor4.4}. Since the directional derivatives of $f$ at $v$
\begin{equation*}
    f'(v_k)=\lim_{a\to v,\,a\in l_k}    \frac{f(a)-f(v)}{d(a,v)},\qquad k\in\sn,
\end{equation*}
exist (cf.\ section~\Iref{sect_5}), we obviously get the boundary condition
\begin{equation}    \label{eq4.7}
    \frac{1}{2}\,f''(v) = \frac{1}{\gg}\,\sum_{k=1}^n w_k f'(v_k)
\end{equation}
as a necessary condition. Finally, inserting of the values~\eqref{eq4.4} of the
parameters $w_k$, $\kn$, and $\gg$ into equation~\eqref{eq4.7} we complete the proof
of theorem~\ref{thm4.2}.
\end{proof}

Next we shall compute the resolvent $\RsW$ of the Walsh process with sticky vertex.
Similarly to the alternative proof of theorem~\ref{thm3.2} for the elastic Walsh
process, as a byproduct we obtain an alternative proof of theorem~\ref{thm4.2}. We
begin with the following

\begin{lemma}   \label{lem4.5}
Let $\gl>0$, $f\in \Co$. Then
\begin{equation}    \label{eq4.8}
    \frac{1}{2} \bigl(\RsW_\gl f\bigr)''(v)
        = \frac{1}{\sgl +\gg\gl}\,\Bigl(2\gl\,(e^w_\gl,f) - \sgl f(v)\Bigr)
\end{equation}
holds, where
\begin{equation}    \label{eq4.9}
    e^w_\gl(\xi) = w_k\,e_\gl(\xi),\qquad \xi\in l_k,\,\kn,
\end{equation}
and $e_\gl$ is defined in equation~\eqref{eq1.14}.
\end{lemma}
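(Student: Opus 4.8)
The plan is to reduce the computation of $\bigl(\RsW_\gl f\bigr)''(v)$ to that of the boundary value $\RsW_\gl f(v)$, and to obtain the latter from the resolvent of the Walsh process by unwinding the time change~\eqref{eq4.1}. Since $\RsW_\gl f$ lies in the domain of the generator $A^s$, the resolvent equation gives $A^s\RsW_\gl f = \gl\,\RsW_\gl f - f$; evaluating at the vertex and using $A^s g(v)=\tfrac12 g''(v)$ (lemma~\Iref{lem_5_2}, as in the proof of theorem~\ref{thm4.2}) yields
\[
    \frac{1}{2}\bigl(\RsW_\gl f\bigr)''(v) = \gl\,\RsW_\gl f(v) - f(v).
\]
Hence it suffices to prove that $\RsW_\gl f(v) = \bigl(2(e^w_\gl,f)+\gg f(v)\bigr)/(\sgl+\gg\gl)$, since inserting this into the last display and clearing the common denominator produces exactly~\eqref{eq4.8}.

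To evaluate $\RsW_\gl f(v)$ I would start from $\RsW_\gl f(v)=E_v\bigl(\int_0^\infty e^{-\gl t}f(W_{\tau(t)})\,dt\bigr)$ and carry out the pathwise substitution $u=\tau(t)$, i.e.\ $t=\tau^{-1}(u)=u+\gg\LW_u$ by~\eqref{eq4.1}. As $\tau^{-1}$ is a continuous strictly increasing bijection of $\R_+$ and $\LW$ is continuous and nondecreasing, one has $dt=du+\gg\,d\LW_u$ and $e^{-\gl t}=e^{-\gl u-\gl\gg\LW_u}$, so that
\[
    \RsW_\gl f(v) = E_v\Bigl(\int_0^\infty e^{-\gl u-\gl\gg\LW_u}\,f(W_u)\,(du+\gg\,d\LW_u)\Bigr).
\]
I would then split this into two terms, using that $\LW$ increases only while $W$ is at $v$, so that $f(W_u)=f(v)$ on the support of $d\LW_u$ in the second term.

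For the first term, $E_v\bigl(\int_0^\infty e^{-\gl u}\,e^{-\gl\gg\LW_u}\,f(W_u)\,du\bigr)$, I would recognize the factor $e^{-\gl\gg\LW_u}$ as the conditional survival probability for killing $W$ via its local time at the constant rate $\gb=\gl\gg$; this term is therefore the elastic Walsh resolvent $\ReW_\gl f(v)$ for that rate, which by~\eqref{eq3.5} and lemma~\ref{lem3.3} equals $\tfrac{\sgl}{\gl\gg+\sgl}\,\RW_\gl f(v)$. Reading off $\RW_\gl f(v)=\tfrac{2}{\sgl}(e^w_\gl,f)$ from the resolvent kernel~\eqref{eq2.3} together with the definition~\eqref{eq4.9} of $e^w_\gl$, the first term becomes $\tfrac{2}{\gl\gg+\sgl}(e^w_\gl,f)$. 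For the second term, $\gg f(v)\,E_v\bigl(\int_0^\infty e^{-\gl u}e^{-\gl\gg\LW_u}\,d\LW_u\bigr)$, I would substitute $r=\LW_u$ (so $u=K^w_r$ and $d\LW_u\mapsto dr$, by continuity of $\LW$) to obtain $\gg f(v)\int_0^\infty e^{-\gl\gg r}\,E_v\bigl(e^{-\gl K^w_r}\bigr)\,dr$; since $E_v(e^{-\gl K^w_r})=E^Q_0(e^{-\gl K^B_r})=e^{-\sgl r}$ by~\eqref{eq1.13} (as in the proof of lemma~\ref{lem3.3}), this equals $\tfrac{\gg f(v)}{\gl\gg+\sgl}$. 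Adding the two terms gives the asserted value of $\RsW_\gl f(v)$.

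The crux of the argument is the middle paragraph: justifying the time-change formula for the resolvent --- in particular treating the Stieltjes term $\gg\,d\LW_u$ correctly and verifying that $f(W_u)=f(v)$ on its support --- and then identifying the first term with the elastic (killed) resolvent at the $\gl$-dependent rate $\gb=\gl\gg$. Once these two points are secured, the remaining two integrals are routine Laplace transforms handled by~\eqref{eq1.13}, \eqref{eq2.3} and~\eqref{eq3.5}.
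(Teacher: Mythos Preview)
Your argument is correct and complete; the identification of the $du$--term with the elastic Walsh resolvent at the $\gl$--dependent rate $\gb=\gl\gg$ is valid (since $\hP_v(\zeta_{\gb,0}>u\mid\cFW)=e^{-\gb\LW_u}$), and the inverse-local-time substitution in the $d\LW_u$--term is the standard change of variables $\int_0^\infty F(u)\,d\LW_u=\int_0^\infty F(K^w_r)\,dr$ together with $\LW_{K^w_r}=r$.

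The paper's proof proceeds differently. It also starts from $A^s\RsW_\gl=\gl\RsW_\gl-\text{id}$ and performs the same time change, but it \emph{subtracts $f(v)$ at the outset}, writing $\tfrac12(\RsW_\gl f)''(v)=\gl\,E_v\bigl(\int_0^\infty e^{-\gl(s+\gg\LW_s)}(f(W_s)-f(v))(ds+\gg\,d\LW_s)\bigr)$. The Stieltjes term then vanishes immediately (because $f(W_s)-f(v)=0$ on the support of $d\LW_s$), leaving a single $ds$--integral. That integral is evaluated not via the elastic resolvent but directly from the explicit joint law of $(|B_s|,L^B_s)$ under $Q_0$ (lemma~\ref{lem1.1}) and the Laplace transform~\eqref{LT}. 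In effect, the paper proves~\eqref{eq4.8} first and derives $\RsW_\gl f(v)$ (corollary~\ref{cor4.6}) afterwards; you reverse that order, computing $\RsW_\gl f(v)$ first and reading off~\eqref{eq4.8}. Your route has the virtue of recycling the Section~\ref{sect3} machinery and exhibiting the link between stickiness and elastic killing at rate $\gl\gg$; the paper's route is more self-contained, needing only lemma~\ref{lem1.1} rather than the results of Section~\ref{sect3}.
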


\begin{proof}
Let $A^s$ be the generator of $\Ws$ on $\Co$. From the identity $A^s \RsW_\gl =
\gl\,\RsW_\gl -\text{id}$, and the definition of $\tau$ we get
\begin{align*}
    \frac{1}{2}\,\bigl(\RsW_\gl f\bigr)''(v)
        &= \gl E_v\Bigl(\int_0^\infty e^{-\gl t}\bigl(f(\Ws_t)-f(v)\bigr)\,dt\Bigr)\\
        &= \gl E_v\Bigl(\int_0^\infty e^{-\gl(s+\gg L^w_s)}
            \bigl(f(W_s)-f(v)\bigr)\,(ds + \gg dL^w_s)\Bigr)\\
        &= \gl E_v\Bigl(\int_0^\infty e^{-\gl(s+\gg L^w_s)}
            \bigl(f(W_s)-f(v)\bigr)\,ds \Bigr).
\end{align*}
In the last equality we used the fact that $L^w$ only grows when $W$ is at the vertex~$v$.
By construction of the Walsh process $W$ we have
\begin{align*}
    E_v\Bigl(&e^{-\gl\gg L^w_s}\bigl(f(W_s)-f(v)\bigr)\Bigr)\\
        &= \sum_{k=1}^n w_k\,E_0\Bigl(e^{-\gl\gg L^B_s}
                \bigl(f_k(|B_s|) - f_k(0)\bigr) \Bigr)\\
        &= 2\sum_{k=1}^n w_k \int_0^\infty \int_0^\infty e^{-\gl\gg y}
                \bigl(f_k(x) - f_k(0)\bigr)\,\frac{x+y}{\sqrt{2\pi s^3}}\,
                    e^{-(x+y)^2/2s}\,dx\,dy,
\end{align*}
where we used lemma~\ref{lem1.1}. We insert the last expression above, and
use formula~\eqref{LT}. This gives
\begin{align*}
    \frac{1}{2}\,\bigl(\RsW_\gl f\bigr)''(v)
        &= 2\gl \sum_{k=1}^n w_k\,\frac{1}{\sgl + \gg\gl}\,
            \int_0^\infty e^{-\sgl x}\bigl(f_k(x)-f_k(0)\bigr)\,dx\\
        &= \frac{1}{\sgl+\gg\gl}\,\bigl(2\gl\,(e^w_\gl,f) - \sgl f(v)\bigr). \qedhere
\end{align*}
\end{proof}

From the identity $A^s \RsW_\gl = \gl\,\RsW_\gl -\text{id}$ and some simple
algebra we get the

\begin{corollary}   \label{cor4.6}
Let $\gl>0$, and $f\in\Co$. Then
\begin{equation}    \label{eq4.11}
    \RsW_\gl f(v) = \frac{1}{\sgl+\gg\gl}\,\bigl(2\,(e^w_\gl,f)+\gg f(v)\bigr)
\end{equation}
holds.
\end{corollary}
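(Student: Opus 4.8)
The plan is to combine Lemma~\ref{lem4.5} with the resolvent identity evaluated at the vertex and then solve the resulting scalar equation for $\RsW_\gl f(v)$. First I would record that, by theorem~\ref{thm4.2} together with lemma~\Iref{lem_5_2}, for $f\in\Co$ the function $\RsW_\gl f$ lies in the domain $\cD(A^s)$ of the generator, and there $A^s$ acts as one half of the second derivative. In particular, evaluating at the vertex,
\begin{equation*}
    A^s\bigl(\RsW_\gl f\bigr)(v) = \frac{1}{2}\,\bigl(\RsW_\gl f\bigr)''(v).
\end{equation*}

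Next I would invoke the resolvent identity $A^s \RsW_\gl = \gl\,\RsW_\gl - \text{id}$, applied to $f$ and read off pointwise at $v$, to obtain
\begin{equation*}
    \frac{1}{2}\,\bigl(\RsW_\gl f\bigr)''(v) = \gl\,\RsW_\gl f(v) - f(v).
\end{equation*}
The left-hand side is precisely the quantity computed in Lemma~\ref{lem4.5}, so substituting formula~\eqref{eq4.8} produces the single scalar equation
\begin{equation*}
    \frac{1}{\sgl+\gg\gl}\,\bigl(2\gl\,(e^w_\gl,f) - \sgl\,f(v)\bigr) = \gl\,\RsW_\gl f(v) - f(v).
\end{equation*}

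Finally I would solve for $\RsW_\gl f(v)$. Moving $f(v)$ to the other side and combining the two $f(v)$-terms over the common denominator $\sgl+\gg\gl$, the coefficient of $f(v)$ collapses to $\gg\gl/(\sgl+\gg\gl)$, since $(\sgl+\gg\gl)-\sgl = \gg\gl$; after dividing through by $\gl$ this yields
\begin{equation*}
    \RsW_\gl f(v) = \frac{1}{\sgl+\gg\gl}\,\bigl(2\,(e^w_\gl,f) + \gg\,f(v)\bigr),
\end{equation*}
which is the claimed identity~\eqref{eq4.11}.

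There is no genuine obstacle here, as the statement itself indicates (\emph{some simple algebra}). The only point deserving a word of care is the justification that the resolvent identity may be evaluated pointwise at the vertex; this is guaranteed by $\RsW_\gl f\in\cD(A^s)\subset\Cii$ together with the fact, used already in Lemma~\ref{lem4.5}, that $A^s$ acts locally as $\tfrac12$ times the second derivative. Everything else is the elementary manipulation described above.
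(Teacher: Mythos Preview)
Your proof is correct and matches the paper's approach exactly: the paper states only that the corollary follows ``from the identity $A^s \RsW_\gl = \gl\,\RsW_\gl -\text{id}$ and some simple algebra,'' and you have spelled out precisely that algebra. One minor remark: you do not need to invoke theorem~\ref{thm4.2} for the fact that $\RsW_\gl f\in\cD(A^s)$ with $A^s$ acting as $\tfrac12$ times the second derivative --- this is already used (and justified via lemma~\Iref{lem_5_2}) in the first line of the proof of lemma~\ref{lem4.5}, so relying on it here keeps the argument logically prior to the alternative proof of theorem~\ref{thm4.2} that follows.
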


Since formula~\eqref{eq1.24} in corollary~\ref{cor1.5} is valid for the resolvent of
every Brownian motion on $\cG$, we may use that formula for $\RsW_\gl f$, sum it
against the weights $w_k$, $\kn$, and insert the right hand side of
equation~\eqref{eq4.11}. This results in
\begin{equation*}
    \sum_{k=1}^n w_k \bigl(\RsW_\gl f\bigr)'(v_k)
        = \gg\,\frac{1}{\sgl+\gg\gl}\,\bigl(2\gl\,(e^w_\gl,f)-\sgl f(v)\bigr),
\end{equation*}
and a comparison with formula~\eqref{eq4.8} shows that equation~\eqref{eq4.7} holds
true for $f$ replaced by $\RsW_\gl f$ for arbitrary $f\in\Co$. As promised we thus
have another proof of theorem~\ref{thm4.2}.

With the help of the first passage time formula we can now provide explicit
expressions for the resolvent $\RsW$, its kernel $\rsW$ and the transition
kernel $\psW$ of $\Ws$. Inserting the right hand side of equation~\eqref{eq4.11}
into the first passage time formula~\eqref{eq1.23}, we immediately obtain for
$f\in\Co$, $\gl>0$,
\begin{equation}    \label{eq4.12}
    \RsW_\gl f(\xi)
        = R^D_\gl f(\xi) + \frac{1}{\sgl +\gg\gl}\,e_\gl(\xi)
            \bigl(2\,(e^w_\gl,f) + \gg f(v)\bigr),\quad \xi\in\cG,
\end{equation}
where $R^D$ is the Dirichlet resolvent~\eqref{eq1.21}. Using formula~\eqref{eq1.22}
for the kernel of $R^D$ together with~\eqref{eq1.23}, and \eqref{eq1.24}, we get the
following result.

\begin{corollary}   \label{cor4.7}
For $\xi$, $\eta\in\cG$, $\gl>0$, the resolvent kernel $\rsW_\gl$,  of the Walsh process
with sticky vertex is given  by
\begin{equation}    \label{eq4.13}
\begin{split}
    \rsW_\gl(\xi,d\eta)
        = r^D_\gl(\xi,\eta)\,d\eta
            +\sum_{k,m=1}^n e_{\gl,k}(\xi)\,\,&2w_m\,\frac{1}{\sgl+\gg\gl}\,e_{\gl,m}(\eta)\,d\eta\\
                &\quad + \frac{\gg}{\sgl +\gg\gl}\,e_\gl(\xi)\,\epsilon_v(d\eta),
\end{split}
\end{equation}
with $r^D_\gl$ defined in~\eqref{eq1.22}, and $\epsilon_v$ denotes the Dirac measure
in $v$. Alternatively, $\rsW_\gl$ is given by
\begin{subequations}    \label{eq4.14}
\begin{equation}    \label{eq4.14a}
\begin{split}
    \rsW_\gl(\xi,d\eta)
        = r_\gl(\xi,\eta)\,d\eta
            +\sum_{k,m=1}^n e_{\gl,k}(\xi)\,&\SsW_{km}(\gl)\,\frac{1}{\sgl}\,e_{\gl,m}(\eta)\,d\eta\\
                &\quad + \frac{\gg}{\sgl +\gg\gl}\,e_\gl(\xi)\,\epsilon_v(d\eta),
\end{split}
\end{equation}
where $r_\gl$ is defined in equation~\eqref{eq1.22a}, and
\begin{equation} \label{eq4.14b}
    \SsW_{km}(\gl)
        = 2\,\frac{\sgl}{\sgl+\gg\gl}\,w_m - \gd_{km}.
\end{equation}
\end{subequations}
\end{corollary}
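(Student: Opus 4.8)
The plan is to read the kernel $\rsW_\gl(\xi,d\eta)$ directly off the operator identity~\eqref{eq4.12}, which already exhibits $\RsW_\gl f(\xi)$ as a sum of three terms applied to $f$, and then to pass to the scattering-matrix form~\eqref{eq4.14a} by splitting the Dirichlet kernel via~\eqref{eq1.22}. Recall that~\eqref{eq4.12} is itself the first passage time formula~\eqref{eq1.23} for $\Ws$ with the value $\RsW_\gl f(v)$ supplied by corollary~\ref{cor4.6}, so no further probabilistic input is needed; what remains is purely to rewrite each term as an integral of $f(\eta)$ against a measure in $d\eta$.

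First I would handle the three summands of~\eqref{eq4.12} in turn. The term $R^D_\gl f(\xi)$ equals $\int_\cG r^D_\gl(\xi,\eta)\,f(\eta)\,d\eta$ by the definition~\eqref{eq1.22} of the Dirichlet resolvent kernel, contributing $r^D_\gl(\xi,\eta)\,d\eta$. For the term $2(\sgl+\gg\gl)^{-1}e_\gl(\xi)\,(e^w_\gl,f)$ I would expand the inner product through the definition~\eqref{eq4.9}, namely $(e^w_\gl,f)=\sum_{m=1}^n w_m\int_{l_m} e_\gl(\eta)\,f(\eta)\,d\eta$, and combine it with the edge decomposition $e_\gl(\xi)=\sum_{k=1}^n\onel(\xi)\,e_{\gl,k}(\xi)$ together with $e_\gl(\eta)\,\onel[m](\eta)=e_{\gl,m}(\eta)$. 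This produces the absolutely continuous double sum $\sum_{k,m} e_{\gl,k}(\xi)\,2w_m\,\frac{1}{\sgl+\gg\gl}\,e_{\gl,m}(\eta)\,d\eta$. Finally, representing the point evaluation as $f(v)=\int_\cG f(\eta)\,\epsilon_v(d\eta)$ turns the last term into the singular contribution $\frac{\gg}{\sgl+\gg\gl}\,e_\gl(\xi)\,\epsilon_v(d\eta)$. Collecting the three pieces gives formula~\eqref{eq4.13}.

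To obtain the alternative form~\eqref{eq4.14a} I would substitute the decomposition $r^D_\gl=r_\gl-r_{v,\gl}$ from~\eqref{eq1.22} and use the second line of~\eqref{eq1.22b}, which reads $r_{v,\gl}(\xi,\eta)=\sum_{k} e_{\gl,k}(\xi)\,\frac{1}{\sgl}\,e_{\gl,k}(\eta)=\sum_{k,m} e_{\gl,k}(\xi)\,\gd_{km}\,\frac{1}{\sgl}\,e_{\gl,m}(\eta)$. Absorbing this diagonal term into the double sum of~\eqref{eq4.13} leaves the absolutely continuous density $\sum_{k,m} e_{\gl,k}(\xi)\bigl(2w_m\,\frac{1}{\sgl+\gg\gl}-\gd_{km}\,\frac{1}{\sgl}\bigr)e_{\gl,m}(\eta)$; factoring out $\frac{1}{\sgl}$ yields exactly the coefficient $\SsW_{km}(\gl)=2\,\frac{\sgl}{\sgl+\gg\gl}\,w_m-\gd_{km}$ of~\eqref{eq4.14b}, while the Dirac term and the replacement of $r^D_\gl$ by $r_\gl$ account for the remaining pieces of~\eqref{eq4.14a}.

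The computation is essentially bookkeeping, and I do not anticipate a real obstacle; the one point demanding care is the edge-indicator structure. Because $r_{v,\gl}$ carries matching indicators $\onel(\xi)\,\onel(\eta)$ on the \emph{same} edge, its single sum over $k$ must be read as the diagonal $\gd_{km}$ part of the $(k,m)$ double sum; this is exactly what guarantees that converting $r^D_\gl$ back to $r_\gl$ feeds precisely the $-\gd_{km}$ term into $\SsW_{km}(\gl)$ and nothing else. The only algebraic check is then that $\frac{1}{\sgl}\,\SsW_{km}(\gl)=2w_m\,\frac{1}{\sgl+\gg\gl}-\gd_{km}\,\frac{1}{\sgl}$ holds with the $\SsW_{km}(\gl)$ stated in~\eqref{eq4.14b}.
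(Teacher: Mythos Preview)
Your proposal is correct and follows essentially the same route as the paper: the corollary is stated immediately after~\eqref{eq4.12} with the remark that one obtains it by inserting the kernel~\eqref{eq1.22} of $R^D_\gl$, which is precisely the bookkeeping you carry out. Your explicit handling of the diagonal term $r_{v,\gl}$ as the $\gd_{km}$ part of the double sum is exactly the computation the paper leaves implicit.
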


\begin{remark}
When all $w_m$, $m=1$, \dots, $n$, are equal to $1/n$, the matrix $S^s(\gl)$ takes the
form
\begin{equation*}
	S^s(\gl) = -\1 + \frac{2\sgl}{\sgl + \gg \gl}\,P_n
\end{equation*}
which reduces to~\eqref{eq2.6a} when $\gg=0$. $S^s(\gl)$ is unitary for all $\gl<0$.
Also the $S^s(\gl)$ for different $\gl$ all commute. As a consequence $S^s(\gl)$
has the interpretation of a quantum scattering matrix in the sense of~\cite{KoSc99}.
More precisely, $S^s(\gl)$ stems from the Schr\"odinger operator $-\gD^s$, where
$\gD^s$ is a self-adjoint Laplace operator on $L^2(\cG)$ with boundary conditions
of the form~\eqref{bc} with the choice
\begin{equation}		\label{AB}
\begin{split}
	A &= - \frac{1}{2}\,\bigl(S^s(\gl_0)+\1\bigr),\\
	B &= - \frac{1}{2\sqrt{\mathstrut2\gl_0}}\,\bigl(S^s(\gl_0)-\1\bigr),
\end{split}
\end{equation}
for any $\gl_0$ for which $\sqrt{2\gl_0}+\gg\gl_0\ne 0$. We emphasize that
the Schr\"odinger operator $-\gD^s$ and the generator $A^s$ of the Walsh process
are quite different: Not only do they act on different Banach spaces,
but also the functions in the intersection of their domains satisfy different
boundary conditions at the vertex $v$. As matter of fact, the integral kernel
of the resolvent  $(-\gD^s+2\gl)^{-1}$ of the Schr\"odinger operator $-\gD^s$ is
given by, see Lemma~4.2 in~\cite{KoSc06},
\begin{equation*}
 \frac{1}{2}\Bigl(r_\gl(\xi,\eta)
            +\sum_{k,m=1}^n e_{\gl,k}(\xi)\,
				\SsW_{km}(\gl)\,\frac{1}{\sgl}\,e_{\gl,m}(\eta)\Bigl),
\end{equation*}
that is --- up to a factor $2$ --- by the right hand side of~\eqref{eq4.14a}
\emph{without} the last term.

In more detail and with the definition~\eqref{SE}
\begin{equation*}
	S_{A,B}\bigl(E=-2\gl\bigr) = S^s(\gl)
\end{equation*}
holds for all $\gl>0$. As a function of $k$ ($k^2=E$), $S^s$ is meromorphic in the complex $k$--plane with
a pole on the positive imaginary axis at $k^b=2i/\gg$. This corresponds to a negative
eigenvalue $E^b=-4/\gg^2$ of $-\gD^s$. The corresponding (normalized) eigenfunction
$\psi^b$ --- physically speaking a \emph{bound state} --- is given as
\begin{equation*}
	\psi^b(\xi) = \frac{1}{2}\,\sqrt{\frac{\gg}{n}}\,e^{-2 d(v,\xi)/\gg},\qquad \xi\in\cG.
\end{equation*}
So quantum mechanically the vertex $v$ acts like an attractive potential. We view this
as a quantum analogue of the stickyness of the vertex $v$.

This analogy can be elaborated a bit further by inspecting the associated quantum mechanical
time delay matrix (see, e.g., \cite{AmJa77, BrFr02, JaMi72, Ma76, Ma81, Na80, Wi55})
\begin{equation*}
	T(k) = \frac{1}{2ik}\,S(k)^{-1}\frac{\p}{\p k} S(k)
\end{equation*}
which in the present context gives
\begin{equation*}
	T(k) = \frac{-2\gg}{k(4+k^2\gg^2)}\,P_n.
\end{equation*}
So $T(k)$ has zero as an $(n-1)$--fold eigenvalue plus the non-degenerate
eigenvalue
\begin{equation*}
	\frac{-2\gg}{k(4+k^2\gg^2)},
\end{equation*}
which for $\gg>0$ is the signal for a strict quantum delay. Observe that for
$k\to +\infty$, that is for large energies, the time delay experienced by the
quantum particle tends to zero, while for $k\to 0$, i.e., for low energies,
the delay becomes arbitrarily large. From the physical point of view, both
effects are clearly to be expected. For comparison and in contrast to the
present stochastic context, in quantum mechanics $\gg<0$ is also allowed for a
meaningful Schr\"odinger operator and an associated scattering matrix.
\end{remark}

Define for $x\ge 0$, $\gg$, $t>0$,
\begin{equation}    \label{eq4.15}
    g_{0,\gg}(t,x) = \frac{1}{\gg}\,\exp\Bigl(\frac{2x}{\gg} + \frac{2t}{\gg^2}\Bigr)\,
                    \erfc\Bigl(\frac{x}{\sqrt{2t}}+\frac{\sqrt{2t}}{\gg}\Bigr).
\end{equation}
It is not hard to check that
\begin{equation}	\label{lim_g}
    \lim_{\gg\downarrow 0} g_{0,\gg}(t,x) = g(t,x) = \frac{1}{\sqrt{2\pi t}}\,e^{-x^2/2t}.
\end{equation}
Moreover, from~\cite[eq.~5.6.16]{ErMa54a} (cf.\ also appendix~\ref{0_app_LTHK}
in~\cite{BMMG0}) the Laplace transform is
\begin{equation}    \label{eq4.16}
    \cL g_{0,\gg}(\,\cdot\,,x)(\gl) = \frac{1}{\sgl + \gg\gl}\,e^{-\sgl x},\qquad x\ge 0.
\end{equation}
Observe that in agreement with~\eqref{lim_g}
\begin{equation*}
	\cL g(\,\cdot\,,x)(\gl) = \frac{1}{\sgl}\,e^{-\sgl x}
\end{equation*}
holds. Now we can readily compute the inverse Laplace transform of formulae~\eqref{eq4.13},
\eqref{eq4.14}, and obtain the following result.

\begin{corollary}   \label{cor4.8}
For $t>0$, $\xi$, $\eta\in\cG$ the transition kernel of the Walsh process with sticky
vertex is given by
\begin{equation}    \label{eq4.17}
\begin{split}
    \psW(t,\xi,\eta) &= p^D(t,\xi,\eta)\,d\eta\\[1ex]
                &\qquad + \sum_{k,m=1}^n \onel(\xi)\,2w_m
                    \,g_{0,\gg}\bigl(t,d_v(\xi,\eta)\bigr)\onel[m](\eta)\,d\eta\\[1ex]
                &\qquad + \gg\,g_{0,\gg}\bigl(t, d(\xi,v)\bigr)\,\epsilon_v(d\eta)
\end{split}
\end{equation}
where $p^D$ is defined in equation~\eqref{eq1.20}, or alternatively by
\begin{equation}    \label{eq4.18}
\begin{split}
    \psW(t,\xi,d\eta)
        &= p(t,\xi,\eta)\,d\eta\\
        &\qquad +\sum_{k,m=1}^n \onel(\xi)\,\Bigl(
                        2w_m\,g_{0,\gg}\bigl(t,d_v(\xi,\eta)\bigr)\,d\eta\\[-1ex]
        &\hspace{9.5em}   -\gd_{km}\,g\bigl(t,d_v(\xi,\eta)\bigr)\,d\eta\Bigr)\, \onel[m](\eta)\\[1ex]
        &\qquad   +\gg\,g_{0,\gg}\bigl(t, d(\xi,v)\bigr)\,\epsilon_v(d\eta),
\end{split}
\end{equation}
and $p(t,a,b)$ is given in formula~\eqref{eq1.16}.
\end{corollary}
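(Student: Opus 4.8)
The plan is to obtain $\psW$ by inverting, term by term, the time-Laplace transforms displayed in corollary~\ref{cor4.7}: since $\rsW_\gl(\xi,d\eta)=\int_0^\infty e^{-\gl t}\,\psW(t,\xi,d\eta)\,dt$, once each summand of~\eqref{eq4.13} (respectively~\eqref{eq4.14}) is recognised as the Laplace transform of an explicit kernel, uniqueness of the Laplace transform produces the two claimed formulae. All the required inverse transforms are already available: $r^D_\gl$ is the transform of the Dirichlet heat kernel $p^D$ of~\eqref{eq1.20} (noted after~\eqref{eq1.22b}), and comparing~\eqref{eq1.22a} with~\eqref{eq1.16} shows that $r_\gl$ is the transform of $p$; moreover~\eqref{eq4.16} gives $\cL g_{0,\gg}(\,\cdot\,,x)(\gl)=(\sgl+\gg\gl)^{-1}e^{-\sgl x}$, while $\cL g(\,\cdot\,,x)(\gl)=(\sgl)^{-1}e^{-\sgl x}$.

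First I would invert the form~\eqref{eq4.13}. The leading term $r^D_\gl(\xi,\eta)$ inverts to $p^D(t,\xi,\eta)$. In the double sum I use that for $\xi\in l_k$, $\eta\in l_m$ the exponentials combine, by~\eqref{eq1.14} and~\eqref{eq1.15}, into $e_{\gl,k}(\xi)\,e_{\gl,m}(\eta)=e^{-\sgl\,d(\xi,v)}e^{-\sgl\,d(v,\eta)}=e^{-\sgl\,d_v(\xi,\eta)}$; hence each summand carries the factor $(\sgl+\gg\gl)^{-1}e^{-\sgl d_v(\xi,\eta)}$, which by~\eqref{eq4.16} is the transform of $g_{0,\gg}\bigl(t,d_v(\xi,\eta)\bigr)$. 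The atomic term $\gg(\sgl+\gg\gl)^{-1}e_\gl(\xi)\,\epsilon_v(d\eta)$ likewise inverts, by~\eqref{eq4.16}, to $\gg\,g_{0,\gg}\bigl(t,d(\xi,v)\bigr)\,\epsilon_v(d\eta)$. Reassembling the three pieces is precisely~\eqref{eq4.17}.

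For the alternative form~\eqref{eq4.18} I would start from~\eqref{eq4.14a} and argue in the same way, the only difference being that the scattering coefficient has to be separated before inversion: by~\eqref{eq4.14b}, $\SsW_{km}(\gl)(\sgl)^{-1}=2w_m(\sgl+\gg\gl)^{-1}-\gd_{km}(\sgl)^{-1}$, whose first part inverts (via~\eqref{eq4.16}) to $2w_m\,g_{0,\gg}\bigl(t,d_v(\xi,\eta)\bigr)$ and whose second inverts to $-\gd_{km}\,g\bigl(t,d_v(\xi,\eta)\bigr)$. Together with $r_\gl\to p$ for the leading term and the unchanged atom, this yields~\eqref{eq4.18}.

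The computation is routine; the points requiring care are the measure-valued bookkeeping and the justification of the term-by-term inversion. I would note that $\rsW_\gl(\xi,\cdot)$ decomposes as a density against Lebesgue measure on $\cG$ plus an atom at $v$, and that this decomposition is preserved under inversion, so the absolutely continuous summands combine into the density $\psW(t,\xi,\eta)$ while the coefficient of $\epsilon_v(d\eta)$ is inverted separately; uniqueness of the (finite, smooth) Laplace transforms then fixes both parts. As a final consistency check I would confirm that~\eqref{eq4.17} and~\eqref{eq4.18} coincide, using $p=p^D+p_v$ and $p_v(t,\xi,\eta)=\sum_{k,m}\onel(\xi)\,\gd_{km}\,g\bigl(t,d_v(\xi,\eta)\bigr)\,\onel[m](\eta)$ so that the $-\gd_{km}\,g$ contribution in~\eqref{eq4.18} exactly accounts for the difference between $p$ and $p^D$; this is the step where any slip in the $\gd_{km}$-bookkeeping would show up.
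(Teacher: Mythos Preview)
Your proposal is correct and is exactly the argument the paper has in mind: the paper does not give a separate proof of the corollary but simply states, just before it, that one ``can readily compute the inverse Laplace transform of formulae~\eqref{eq4.13}, \eqref{eq4.14}'' using~\eqref{eq4.16}, which is precisely the term-by-term inversion you carry out. Your added consistency check that~\eqref{eq4.17} and~\eqref{eq4.18} agree via $p=p^D+p_v$ is a nice sanity check that the paper leaves implicit.
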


We close this section with some remarks concerning the local time of $\Ws$ at the
vertex $v$, which also serve to prepare the construction of the most general
Brownian motion on the single vertex graph $\cG$ in the next section.

Let us define
\begin{equation}    \label{sLT}
    L^s_t = L^w_{\tau(t)},\qquad t\ge 0,
\end{equation}
where --- as before --- $L^w$ denotes the local time of the Walsh process at the
vertex, having (cf.\ section~\ref{sect3}) the same normalization as the local time
of a standard one-dimensional Brownian motion (cf.~\eqref{eq1.7a}). By construction,
$L^s$ is pathwise continuous and non-decreasing. It is adapted to $\cF^s$, and a
straightforward calculation based on the additivity of $L^w$ and
formula~\eqref{eq4.2} shows the (pathwise) additivity property
\begin{equation}    \label{add_sLT}
    L^s_{s+t} = L^s_t + L^s_s \comp \theta^s_t,\qquad s,\,t\ge 0.
\end{equation}
Thus $L^s$ is a PCHAF of $(W^s,\cF^s)$. Furthermore, $t\ge 0$ is a point of increase
for $L^s$ if only if $\tau(t)$ is a point of increase for $L^w$, which only is the
case if $W_{\tau(t)}$ is at the vertex, i.e., if $W^s_t$ is at the vertex. Thus, it
follows that $L^s$ is a local time at the vertex for $W^s$. In order to completely
identify it, it remains to compute its normalization, and it is not very hard to
compute its $\ga$--potential (the interested reader can find the details for the
case $\cG=\R_+$ in~\cite{BMMG0}):
\begin{equation}    \label{norm_sLT}
    E_\xi\Bigl(\int_0^\infty e^{-\ga t}\,dL^s_t\Bigr)
        = \frac{1}{\sqrt{2\ga}+\gg\ga}\,e^{-\sqrt{2\ga}\,d(\xi,v)},\qquad \ga>0,\,\xi\in\cG.
\end{equation}


\section{The General Brownian Motion on a Single Vertex Graph}  \label{sect5}
Finally, in this subsection we construct a Brownian motion $\Wg$ by killing the Walsh
process with sticky vertex of section~\ref{sect4} in a similar way as in the
construction of the elastic Walsh process (cf.~section~\ref{sect3}). $\Wg$ realizes
the boundary condition~\eqref{eq1.1} in its most general form.

Consider the sticky Walsh process $\Ws$ with stickiness parameter $\gg>0$, right
continuous and complete filtration $\cF^s$, and local time $L^s$ at the vertex. We
argued in section~\ref{sect4} that $L^s$ is a PCHAF for $(W^s,\cF^s)$, and therefore
we can apply the method of killing described in section~\ref{ssect1.5}: We bring in
the additional probability space $(\R_+,\cB(\R_+),P_\gb)$ where $P_\gb$ is the
exponential law of rate $\gb>0$, and the canonical coordinate random variable $S$.
Then we take the family of product spaces $\bigl(\hgO,\hcA,(\hP_\xi,\,\xi\in\cG)\bigr)$
of $\bigl(\gO,\cA,(P_\xi,\,\xi\in\cG)\bigr)$ and $(\R_+,\cB(\R_+),P_\gb)$. Define
the random time
\begin{equation}    \label{eq5.1}
    \zeta_{\gb,\gg} = \inf\bigl\{t\ge 0,\,L^s_{t}>S\bigr\}.
\end{equation}
Then by the arguments given in section~\ref{ssect1.5}, the stochastic process $\Wg$
defined by $\Wg_t=\Ws_t$ for $t\in[0,\zeta_{\gb,\gg})$, and $\Wg_t=\gD$ for $t\ge
\zeta_{\gb,\gg}$, is again a Brownian motion on $\cG$ in the sense of
definition~\Iref{def_3_1}.

Denote by $K^s$ the right continuous pseudo-inverse of $L^s$. Since $L^s$
is continuous (cf.\ equation~\eqref{sLT}), we get $L^s_{K^s_r}=r$ for all
$r\in\R_+$. Recall that the right continuous pseudo-inverse of the local
time $L^w$ of the Walsh process was denoted by $K^w$. Then we have the following

\begin{lemma}	\label{lem5.1a}
For all $\gg\ge 0$, the following relation holds true:
\begin{equation}	\label{eq5.1a}
	K^s_r = K^w_r + \gg r,\qquad r\in\R_+.
\end{equation}
\end{lemma}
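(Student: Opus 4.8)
The plan is to show that the pseudo-inverse $K^s$ of $L^s$ is nothing but $K^w$ composed with the time change $\tau^{-1}$, and then to read off the stated formula from the explicit expression for $\tau^{-1}$. Thus the whole lemma reduces to the identity $K^s_r = \tau^{-1}(K^w_r)$ together with $L^w_{K^w_r}=r$.

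First I would translate the definition of $K^s_r$ into an event about $L^w$. By \eqref{sLT} we have $L^s_t = L^w_{\tau(t)}$, so $\{L^s_t > r\} = \{L^w_{\tau(t)} > r\}$. Applying the set identity \eqref{eq_set_K} to the pair $(L^w,K^w)$ gives $\{L^w_{\tau(t)} > r\} = \{K^w_r < \tau(t)\}$. Since $\tau$ is a strictly increasing continuous bijection of $\R_+$ onto itself (and hence so is $\tau^{-1}$), the inequality $K^w_r < \tau(t)$ is equivalent to $\tau^{-1}(K^w_r) < t$. Therefore $\{t\ge 0 : L^s_t > r\} = \bigl(\tau^{-1}(K^w_r),+\infty\bigr)$, whose infimum is $\tau^{-1}(K^w_r)$; that is, $K^s_r = \tau^{-1}(K^w_r)$.

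Next I would substitute the explicit form of the time change. By \eqref{eq4.1}, $\tau^{-1}(t) = t + \gg\,L^w_t$, so evaluating at $t=K^w_r$ yields $K^s_r = K^w_r + \gg\,L^w_{K^w_r}$. Finally, the a.s.\ continuity of $L^w$ — which is pathwise the local time at the origin of the underlying standard Brownian motion (cf.\ subsection~\ref{ssect1.2} and section~\ref{sect3}) — gives $L^w_{K^w_r}=r$, and the claimed identity $K^s_r = K^w_r + \gg\,r$ follows.

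The only delicate point is the passage from $\{K^w_r < \tau(t)\}$ to $\{\tau^{-1}(K^w_r) < t\}$ and the ensuing identification of the infimum: this is exactly where \emph{strict} monotonicity of $\tau$ (not merely monotonicity) is needed, but it is immediate once one recalls that $\tau^{-1}$ is a strictly increasing homeomorphism of $\R_+$. I expect no serious obstacle; the argument is a clean time-change computation. As a sanity check, the degenerate case $\gg=0$ gives $\tau=\mathrm{id}$ and $L^s=L^w$, so the formula collapses to $K^s_r = K^w_r$, consistent with the statement.
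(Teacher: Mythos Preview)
Your proof is correct and follows essentially the same route as the paper: both arguments identify the level set $\{t:L^s_t>r\}$ with the $\tau^{-1}$--image of $\{t:L^w_t>r\}$, deduce $K^s_r=\tau^{-1}(K^w_r)$, and then substitute the explicit form of $\tau^{-1}$ together with $L^w_{K^w_r}=r$. The only cosmetic difference is that the paper phrases the set comparison via $J_\gg(r)=\tau^{-1}\bigl(J_0(r)\bigr)$ and cites $L^s_{K^s_r}=r$ (which equals $L^w_{K^w_r}$), whereas you invoke \eqref{eq_set_K} and the strict monotonicity of $\tau$ directly.
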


\begin{proof}
For $\gg$, $r\in\R_+$ define the random subset
\begin{equation*}
    J_\gg(r) = \{t\ge 0,\, L^s_t>r\}
\end{equation*}
of $\R_+$. Since $L^s$ is pathwise increasing, this set is a random interval
with endpoints $K^s_r$ and $+\infty$. The relation $L^s_{K^s_r}=r$
implies that
\begin{equation*}
	J_\gg(r) = (K^s_r,+\infty).
\end{equation*}
In particular, we have $J_0 = (K^w_r,+\infty)$. Now
\begin{equation*}
    t\in J_\gg(r) \Leftrightarrow L^s_t = L^w_{\tau(t)} > r \Leftrightarrow \tau(t)\in J_0(r).
\end{equation*}
In other words, $J_\gg(r) = \tau^{-1}\bigl(J_0(r))$, and therefore
$K^s_r=\tau^{-1}(K^w_r)$ holds. From the definition of $\tau^{-1}$ (see equation~\eqref{eq4.1}),
and the relation  $L^s_{K^s_r}=r$ we obtain formula~\eqref{eq5.1a}
\end{proof}

In the proof of lemma~\ref{lem3.3} the Laplace transform of the density of
$K^w_r$, $r\ge 0$, under $P_v$ has been determined as $\gl \mapsto
\exp(-\sgl r)$. Hence we have
\begin{equation*}
	P_v(K^w_r\in dl) = \frac{r}{\sqrt{2\pi l^3}}\,e^{-r^2/2l}\,dl,\qquad l\ge 0.
\end{equation*}
As a consequence we find the

\begin{corollary}   \label{cor5.1b}
For $r\ge 0$, $K^s_r$ has the density
\begin{equation}	\label{eq5.1b}
	P_v(K^s_r\in dl)
		= \frac{r}{\sqrt{2\pi (l-\gg r)^3}}\,e^{-r^2/2(l-\gg r)}\,dl,\qquad l\ge \gg r.
\end{equation}
Furthermore, the Laplace transform of the density of $K^s_r$ under $P_v$ is given
by
\begin{equation}	\label{eq5.1c}
	E_v\bigl(e^{-\gl K^s_r}\bigr)
		= e^{-(\sgl + \gl \gg)r},\qquad \gl >0.
\end{equation}
\end{corollary}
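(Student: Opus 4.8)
The plan is to derive both assertions directly from lemma~\ref{lem5.1a}, which identifies $K^s_r$ as the \emph{deterministic} shift $K^w_r + \gg r$ of $K^w_r$. Because this shift is nonrandom, every distributional statement about $K^s_r$ reduces to the corresponding statement about $K^w_r$, combined with the density $P_v(K^w_r\in dl) = \frac{r}{\sqrt{2\pi l^3}}\,e^{-r^2/2l}\,dl$, $l\ge 0$, recorded immediately before the corollary, and the Laplace transform $E_v(e^{-\gl K^w_r}) = e^{-\sgl r}$ established in the proof of lemma~\ref{lem3.3}.

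First I would establish the density~\eqref{eq5.1b}. Since $K^s_r = K^w_r + \gg r$ with $\gg r$ constant, the law of $K^s_r$ under $P_v$ is simply the law of $K^w_r$ translated to the right by $\gg r$. Testing against an arbitrary bounded measurable $h$,
\begin{equation*}
	E_v\bigl(h(K^s_r)\bigr)
		= E_v\bigl(h(K^w_r + \gg r)\bigr)
		= \int_0^\infty h(l + \gg r)\,\frac{r}{\sqrt{2\pi l^3}}\,e^{-r^2/2l}\,dl,
\end{equation*}
and the substitution $l\mapsto l - \gg r$ turns the right-hand side into $\int_{\gg r}^\infty h(l)\,\frac{r}{\sqrt{2\pi(l-\gg r)^3}}\,e^{-r^2/2(l-\gg r)}\,dl$. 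This is exactly~\eqref{eq5.1b}, with the support condition $l\ge\gg r$ appearing automatically as the image of $l\ge 0$ under the shift.

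Next I would obtain the Laplace transform~\eqref{eq5.1c}. Using $K^s_r = K^w_r + \gg r$ once more and factoring out the deterministic part of the exponent,
\begin{equation*}
	E_v\bigl(e^{-\gl K^s_r}\bigr)
		= e^{-\gl\gg r}\,E_v\bigl(e^{-\gl K^w_r}\bigr)
		= e^{-\gl\gg r}\,e^{-\sgl r}
		= e^{-(\sgl + \gl\gg)r},
\end{equation*}
where the middle equality is the transform from the proof of lemma~\ref{lem3.3}. As a consistency check one can instead insert the density~\eqref{eq5.1b} into the Laplace integral and apply~\eqref{LT} after the same substitution $l\mapsto l-\gg r$, recovering the identical expression.

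Since lemma~\ref{lem5.1a} carries all the structural content, there is no genuine obstacle here; the corollary is a direct consequence. The only points that require a little care are the correct propagation of the support condition $l\ge\gg r$ through the change of variables, and the bookkeeping of the factor $e^{-\gl\gg r}$ that is produced by the deterministic summand $\gg r$ in the shift.
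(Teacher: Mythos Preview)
Your proposal is correct and follows exactly the route the paper takes: the corollary is stated as an immediate consequence of lemma~\ref{lem5.1a} together with the density and Laplace transform of $K^w_r$ recorded just before it, and you have simply spelled out the translation argument and the factorization $E_v(e^{-\gl K^s_r}) = e^{-\gl\gg r} E_v(e^{-\gl K^w_r})$ that the paper leaves implicit.
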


\begin{remark}	\label{rem5.1c}
One can use lemma~\ref{0_a_LTHK_lem1} in~\cite{BMMG0} to check that the right hand side
of equation~\eqref{eq5.1b} is indeed the inverse Laplace transform of the right hand side
of formula~\eqref{eq5.1c}.
\end{remark}

Observe that $\zeta_{\gb,\gg} = K^s_S$ and $\zeta_{\gb,0}=K^w_S$. Thus we obtain the

\begin{corollary}   \label{lem5.1}
For all $\gb>0$, $\gg\ge0$, the following equation holds true
\begin{equation}    \label{eq5.2}
    \zeta_{\gb,\gg} = \zeta_{\gb,0}+\gg S.
\end{equation}
\end{corollary}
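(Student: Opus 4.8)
The plan is to obtain~\eqref{eq5.2} as an immediate consequence of lemma~\ref{lem5.1a}, evaluated at the random argument $S$. First I would recall, as noted in the text just before the statement, that $\zeta_{\gb,\gg}=K^s_S$ and $\zeta_{\gb,0}=K^w_S$, where $S$ is the exponential coordinate random variable on the enlarged probability space.

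Next I would observe that the identity $K^s_r=K^w_r+\gg r$ furnished by lemma~\ref{lem5.1a} is genuinely \emph{pathwise}: its derivation rests on the pathwise relation $K^s_r=\tau^{-1}(K^w_r)$, the definition $\tau^{-1}(t)=t+\gg L^w_t$ from~\eqref{eq4.1}, and the a.s.\ equality $L^w_{K^w_r}=r$, which by the continuity of $L^w$ holds simultaneously for all $r\in\R_+$. Hence, for a.e.\ $\go$ the displayed identity is valid for every $r\in\R_+$ at once, so I may substitute the random value $r=S$ directly to get
\begin{equation*}
    \zeta_{\gb,\gg}=K^s_S=K^w_S+\gg S=\zeta_{\gb,0}+\gg S,
\end{equation*}
which is precisely~\eqref{eq5.2}.

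The one point requiring care --- and the step I expect to be the main (if minor) obstacle --- is the legitimacy of inserting the random argument $S$ into lemma~\ref{lem5.1a}. This is harmless precisely because that lemma holds pathwise in $r$ rather than merely for each fixed $r$ almost surely; were it only the latter, one would have to invoke the independence of $S$ from $(W,L^w)$ together with a Fubini-type argument to conclude. Since the pathwise character is manifest from the construction, no such extra work is required.
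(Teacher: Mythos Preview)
Your proposal is correct and mirrors the paper's own argument exactly: the paper simply records ``Observe that $\zeta_{\gb,\gg}=K^s_S$ and $\zeta_{\gb,0}=K^w_S$'' and then states the corollary as an immediate consequence of lemma~\ref{lem5.1a}. Your added justification that the substitution $r=S$ is legitimate because lemma~\ref{lem5.1a} holds pathwise in $r$ is a welcome elaboration, but the underlying route is identical.
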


As before, $\hE_\xi$ denotes the expectation with respect to $\hP_\xi$, $\xi\in\cG$.

\begin{corollary}   \label{cor5.2}
For all $\gb>0$, $\gg\ge 0$, $\gl>0$, the following formula holds true
\begin{subequations}    \label{eq5.3}
\begin{equation}    \label{eq5.3a}
    \hE_v\bigl(e^{-\gl \zeta_{\gb,\gg}}\bigr) = \gb \rho(\gl),
\end{equation}
with
\begin{equation}    \label{eq5.3b}
    \rho(\gl) = \frac{1}{\gb+\sgl+\gg\gl}.
\end{equation}
\end{subequations}
\end{corollary}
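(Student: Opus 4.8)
The plan is to exploit the representation $\zeta_{\gb,\gg} = K^s_S$ recorded just before the statement, combined with the explicit Laplace transform of $K^s_r$ furnished by Corollary~\ref{cor5.1b}. First I would recall that, by construction, $S$ is the coordinate variable on the additional factor $(\R_+,\cB(\R_+),P_\gb)$ and is therefore independent of the sticky Walsh process $\Ws$ under $\hP_v = P_v\otimes P_\gb$. Conditioning on $S$ and applying Fubini's theorem turns the quantity to be computed into
\begin{equation*}
    \hE_v\bigl(e^{-\gl K^s_S}\bigr)
        = \gb\int_0^\infty e^{-\gb s}\,E_v\bigl(e^{-\gl K^s_s}\bigr)\,ds.
\end{equation*}

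The second step is to insert formula~\eqref{eq5.1c} of Corollary~\ref{cor5.1b}, namely $E_v\bigl(e^{-\gl K^s_s}\bigr) = e^{-(\sgl + \gg\gl)s}$, which reduces the right hand side to the elementary integral
\begin{equation*}
    \gb\int_0^\infty e^{-(\gb + \sgl + \gg\gl)s}\,ds
        = \frac{\gb}{\gb + \sgl + \gg\gl}.
\end{equation*}
This is exactly $\gb\,\rho(\gl)$ with $\rho$ as defined in~\eqref{eq5.3b}. Since $\gb>0$ ensures convergence of the integral for every $\gl>0$ and every $\gg\ge 0$, the asserted identity~\eqref{eq5.3a} follows.

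I expect essentially no analytic obstacle here; the only point that genuinely deserves care is the justification of interchanging the expectation over the process with the integration over $S$, which rests entirely on the product structure of $\bigl(\hgO,\hcA,\hP_v\bigr)$ and the independence of $S$ from $\Ws$. One could argue equivalently through Corollary~\ref{lem5.1}, writing $\zeta_{\gb,\gg} = \zeta_{\gb,0} + \gg S = K^w_S + \gg S$ and then using the transform $E_v\bigl(e^{-\gl K^w_s}\bigr) = e^{-\sgl s}$ established in the proof of Lemma~\ref{lem3.3}; both routes collapse to the same elementary integral, which merely reflects the fact that~\eqref{eq5.1c} is itself derived from the shift $K^s_r = K^w_r + \gg r$ of Lemma~\ref{lem5.1a}.
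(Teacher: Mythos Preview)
Your proposal is correct and follows essentially the same route as the paper: write $\zeta_{\gb,\gg}=K^s_S$, integrate $E_v\bigl(e^{-\gl K^s_r}\bigr)$ against the exponential law of $S$, and insert formula~\eqref{eq5.1c} from Corollary~\ref{cor5.1b}. The paper's proof is simply the two-line version of your argument, without spelling out the Fubini/independence justification or the alternative route via Corollary~\ref{lem5.1}.
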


\begin{proof}
With corollary~\ref{cor5.1b} and $\zeta_{\gb,\gg} = K^s_S$ we obtain
\begin{align*}
    \hE_v\bigl(e^{-\gl \zeta_{\gb,\gg}}\bigr)
        &= \gb \int_0^\infty E_v\bigl(e^{-\gl K^s_r}\bigr)\,e^{-\gb r}\,dr\\
	   &= \frac{\gb}{\gb + \sgl + \gg \gl}.	\qedhere
\end{align*}
\end{proof}

Denote by $\Rg$ the resolvent of $\Wg$. With lemma~\ref{lem1.6} we immediately find
the

\begin{corollary}   \label{cor5.3}
For all $f\in\Co$, $\gl>0$, $\xi\in\cG$ the following formula holds true:
\begin{equation}    \label{eq5.4}
    \Rg_\gl f(\xi)
        = \RsW_\gl f(\xi) - \gb\rho(\gl)\,e_\gl(\xi)\,\RsW_\gl f(v).
\end{equation}
\end{corollary}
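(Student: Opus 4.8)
The plan is to recognize Corollary~\ref{cor5.3} as an immediate specialization of the abstract killing formula of Lemma~\ref{lem1.6}, combined with the Laplace transform already computed in Corollary~\ref{cor5.2}. There is essentially no hard analytic step; the only thing demanding attention is confirming that the present construction genuinely falls within the framework of subsection~\ref{ssect1.5}.

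First I would check the hypotheses of that framework. The process being killed is the sticky Walsh process $\Ws$, which was shown in section~\ref{sect4} to be a Brownian motion on $\cG$ with infinite lifetime and with a non-trapping (hence regular) vertex. Its local time $L^s$ at the vertex was identified at the end of section~\ref{sect4} as a PCHAF of $(\Ws,\cFsW)$, with normalization fixed by the $\ga$--potential~\eqref{norm_sLT}. Thus the killing-via-local-time recipe of subsection~\ref{ssect1.5} applies verbatim, with $X=\Ws$, local time $L=L^s$, exponential mark $S$ of rate $\gb$, and killing time $\zeta_{\gb,\gg}=K^s_S$; the resulting subprocess is precisely $\Wg$.

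Second, I would apply Lemma~\ref{lem1.6} under the identifications $R=\RsW$ (resolvent of the process being killed) and $\hR=\Rg$ (resolvent of the killed process), with the abstract killing time $\zeta_\gb$ of that lemma now read as $\zeta_{\gb,\gg}$. This gives, for $f\in\Co$, $\gl>0$, $\xi\in\cG$,
\begin{equation*}
    \Rg_\gl f(\xi)
        = \RsW_\gl f(\xi) - e_\gl(\xi)\,\hE_v\bigl(e^{-\gl\zeta_{\gb,\gg}}\bigr)\,\RsW_\gl f(v).
\end{equation*}
Note that the function $e_\gl$ of~\eqref{eq1.14} is common to all of these processes, since they each agree with a standard Brownian motion on every edge up to the hitting time $H_v$, so the quantity $e_\gl(\xi)=e^{-\sgl\,d(\xi,v)}$ is unchanged by the time change and the killing.

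Finally, I would substitute the value of the Laplace transform from Corollary~\ref{cor5.2}, namely $\hE_v\bigl(e^{-\gl\zeta_{\gb,\gg}}\bigr)=\gb\rho(\gl)$ with $\rho(\gl)=(\gb+\sgl+\gg\gl)^{-1}$, into the displayed identity, which yields formula~\eqref{eq5.4} exactly. The only point requiring care is the bookkeeping of the first step---confirming that $L^s$ plays the role of the abstract PCHAF and that $\zeta_{\gb,\gg}$ plays the role of the abstract killing time $\zeta_\gb$---after which the corollary follows by pure substitution.
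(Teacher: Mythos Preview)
Your proposal is correct and follows essentially the same route as the paper: the paper simply says that with Lemma~\ref{lem1.6} one immediately obtains the corollary, the substitution of $\hE_v\bigl(e^{-\gl\zeta_{\gb,\gg}}\bigr)=\gb\rho(\gl)$ from Corollary~\ref{cor5.2} being implicit. Your additional verification that the hypotheses of subsection~\ref{ssect1.5} are met (PCHAF, non-trapping vertex, identification of $\zeta_{\gb,\gg}$ with the abstract $\zeta_\gb$) is a welcome elaboration but not a departure in method.
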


Now it is easy to verify that for appropriately chosen parameters $\gb$, $\gg$,
$w_k$, $\kn$, the Brownian motion $W_g$ realizes the boundary
condition~\eqref{eq1.1b}.

\begin{theorem} \label{thm5.4}
Consider the boundary condition~\eqref{eq1.1}, and assume that $b$ is not the null
vector. Set $r=a+c\in(0,1)$, and
\begin{equation}    \label{eq5.5a}
    w_k = \frac{b_k}{1-r},\,\kn,\quad \gb = \frac{a}{1-r},\quad \gg = \frac{c}{1-r}.
\end{equation}
Let $\Wg$ be the Brownian motion as constructed above with these parameters.
Then the generator $A^g$ of $\Wg$ is $1/2$ times the Laplacean on $\cG$ with domain
$\cD(A^g)$ consisting of those $f\in\Cii$ which satisfy condition~\eqref{eq1.1b}.
\end{theorem}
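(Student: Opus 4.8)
The plan is to follow the resolvent-based second proofs of Theorems~\ref{thm3.2} and~\ref{thm4.2}, rather than to compute $A^g f(v)$ by a Dynkin's-formula argument for the killed sticky process. That is, I would verify directly that every function in the range of the resolvent $\Rg_\gl$ satisfies the boundary condition~\eqref{eq1.1b} with the prescribed parameters, and then invoke Feller's theorem to identify this range with the claimed domain. All the analytic input is already available: Corollary~\ref{cor5.3} expresses $\Rg_\gl f$ through $\RsW_\gl f$ and $e_\gl$, and the behaviour of $\RsW_\gl f$ at the vertex was fixed in section~\ref{sect4}.

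Write $G=\RsW_\gl f$ and $\rho=\rho(\gl)=(\gb+\sgl+\gg\gl)^{-1}$, cf.~\eqref{eq5.3b}. Since $\Wg$ is a Brownian motion on $\cG$ (by the killing construction of subsection~\ref{ssect1.5}), $\Rg_\gl f\in\Cii$, so its boundary data are well defined. From $e_\gl(\xi)=e^{-\sgl d(\xi,v)}$ and $(\sgl)^2=2\gl$ one reads off $e_\gl(v)=1$, $e_\gl'(v_k)=-\sgl$, and $e_\gl''(v)=2\gl$; inserting these into~\eqref{eq5.4} gives the three boundary quantities
\begin{align*}
    \Rg_\gl f(v) &= (1-\gb\rho)\,G(v),\\
    (\Rg_\gl f)''(v) &= G''(v) - 2\gl\gb\rho\,G(v),\\
    (\Rg_\gl f)'(v_k) &= G'(v_k) + \sgl\,\gb\rho\,G(v),\qquad \kn.
\end{align*}

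I would then substitute these into $a\,\Rg_\gl f(v) + \tfrac{c}{2}(\Rg_\gl f)''(v) - \sum_k b_k (\Rg_\gl f)'(v_k)$ and show it vanishes. Two inputs drive the cancellation. First, as $\RsW_\gl$ maps $\Co$ onto $\cD(A^s)$, the function $G$ obeys the sticky boundary condition~\eqref{eq4.7}, i.e.\ $\sum_k w_k G'(v_k)=\tfrac{\gg}{2}G''(v)$; together with the relations $b_k=(1-r)w_k$ and $c=(1-r)\gg$ coming from~\eqref{eq5.5a}, this makes the $\sum_k b_k G'(v_k)$ term cancel the $\tfrac{c}{2}G''(v)$ term exactly. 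Second, after substituting $a=(1-r)\gb$ and $c=(1-r)\gg$, what remains is a multiple of $G(v)$ equal to $(1-r)\gb\bigl[1-\rho(\gb+\sgl+\gg\gl)\bigr]G(v)$, which is $0$ by the definition of $\rho$. Hence $\Rg_\gl f$ satisfies~\eqref{eq1.1b} for every $f\in\Co$ and $\gl>0$.

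For the final identification of the domain I would invoke Feller's theorem in the single-vertex form recalled in subsection~\ref{ssect1.1}: $\cD(A^g)$ equals the set of $f\in\Cii$ satisfying some admissible condition of the form~\eqref{eq1.1b}, and $\cD(A^g)=\mathrm{Ran}(\Rg_\gl)$. Since every element of this range satisfies our specific condition, the codimension-one subspace $\cD(A^g)$ is contained in the kernel of our boundary functional; because $b\ne0$ the latter functional is nonzero, so the two functionals have the same kernel, hence are proportional, and the shared normalization~\eqref{eq1.1a} forces them to coincide, pinning down the parameters as exactly $(a,c,b)$. The routine work is the evaluation of the boundary data of $e_\gl$ and the algebra collapsing to $1-\rho(\gb+\sgl+\gg\gl)$; the step I expect to need the most care is this last, logical one --- upgrading ``$\Rg_\gl f$ satisfies the condition'' to ``the domain is exactly those $f$'' via surjectivity of the resolvent onto the domain together with the codimension-plus-normalization argument.
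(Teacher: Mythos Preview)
Your proposal is correct and follows essentially the same route as the paper: both proofs use the resolvent relation~\eqref{eq5.4} between $\Rg_\gl$ and $\RsW_\gl$, plug in the boundary data of $e_\gl$, invoke the sticky boundary condition~\eqref{eq4.7} satisfied by $\RsW_\gl f$, and collapse the remaining terms via the identity $\rho(\gb+\sgl+\gg\gl)=1$, before concluding by surjectivity of $\Rg_\gl$ onto $\cD(A^g)$. The paper organises the algebra by first deriving~\eqref{eq5.8} in the $(\gb,\gg,w_k)$ variables and then translating, whereas you compute directly in $(a,b,c)$, but the substance is identical; your explicit codimension-and-normalization argument for the final identification is in fact a bit more careful than the paper's one-line appeal to surjectivity.
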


\begin{proof}
As in the previous cases it is readily seen that the definition~\eqref{eq5.5a} of
the parameters $\gg$, $\gb$, $w_k$, $\kn$, is consistent with the conditions used in
the above construction of $\Wg$.

Let $A^g$ be the generator of $\Wg$ with domain $\cD(A^g)$. Since $\Wg$ is a Brownian
motion on $\cG$ in the sense of definition~\Iref{def_3_1}, it follows from
lemma~\Iref{lem_5_2} that $\cD(A^g)\subset \Cii$, and that for all $f\in\cD(A^g)$,
$A^g f(\xi) = 1/2\,f''(\xi)$, $\xi\in\cG$. Let $h\in\Co$, $\gl>0$. Then $\Rg_\gl
h\in\cD(A^g)$, and therefore we may compute with equation~\eqref{eq5.4} as follows
\begin{align*}
    \frac{\gg}{2}\,\bigl(\Rg_\gl h\bigr)''(v)
        &= \frac{\gg}{2}\,\bigl(\RsW_\gl h\bigr)''(v)
            - \gb\,\rho(\gl)\,2\gl \bigl(\RsW_\gl h\bigr)(v)\\
        &= \sum_{k=1}^n w_k\,\bigl(\RsW_\gl h\bigr)'(v_k)
            - \gb\,\rho(\gl)\,\gg\gl \bigl(\RsW_\gl h\bigr)(v),
\end{align*}
where we used the fact that, since $\RsW_\gl h$ is in the domain of the generator $A^s$ of
$\Ws$, it satisfies the boundary condition~\eqref{eq4.7}. We rewrite this equation
in the following way:
\begin{equation}    \label{eq5.5}
\begin{split}
    \frac{\gg}{2}\,\bigl(\Rg_\gl h\bigr)''(v)
        = \sum_{k=1}^n w_k\,\bigl(\RsW_\gl h\bigr)'(v_k)
                &+\gb\sgl\,\rho(\gl) \bigl(\RsW_\gl h\bigr)(v)\\
                &-\gb\,\rho(\gl)\bigl(\sgl+\gg\gl\bigr)\,\bigl(\RsW_\gl h\Bigr)(v).
\end{split}
\end{equation}
Now we differentiate equation~\eqref{eq5.4} at $\xi\in l_k$, $\kn$, let $\xi$ tend to
$v$ along any edge $l_k$, and sum the resulting equation against the weights $w_k$,
$\kn$. Then we get the following formula
\begin{equation}    \label{eq5.6}
    \sum_{k=1}^n w_k\,\bigl(\Rg_\gl h\bigr)'(v_k)
        = \sum_{k=1}^n w_k\,\bigl(\RsW_\gl h\bigr)'(v_k)
            + \gb\sgl\,\rho(\gl)\bigl(\RsW_\gl h\bigr)(v),
\end{equation}
where we used $\sum_k w_k=1$. On the other hand, for $\xi=v$, equation~\eqref{eq5.4}
gives
\begin{equation}    \label{eq5.7}
    \bigl(\Rg_\gl h\bigr)(v) = \rho(\gl)\bigl(\sgl +\gg\gl\bigr)\bigl(\RsW_\gl h\bigr)(v).
\end{equation}
A comparison of equations~\eqref{eq5.6}, \eqref{eq5.7} with \eqref{eq5.5} shows
that we have proved the following formula
\begin{equation}    \label{eq5.8}
    \frac{\gg}{2}\,\bigl(\Rg_\gl h\bigr)''(v)
        = \sum_{k=1}^n w_k\,\bigl(\Rg_\gl h\bigr)'(v_k) - \gb \bigl(\Rg_\gl h\bigr)(v).
\end{equation}
With the values~\eqref{eq5.5a} for $\gb$, $\gg$, and $w_k$, $\kn$, it is obvious that
$f=\Rg_\gl h$ satisfies equation~\eqref{eq1.1b}. Since $\Rg_\gl$ is surjective from
$\Co$ onto the domain of the generator $A^g$ of $\Wg$, the proof of the theorem is finished.
\end{proof}

Let $\gl>0$, $f\in\Co$. Insertion of the right hand side of formula~\eqref{eq4.12}
for $\RsW_\gl$ into equation~\eqref{eq5.4} gives us after some simple algebra the
following expression for $\Rg_\gl f$:
\begin{equation}    \label{eq5.9}
    \Rg_\gl f(\xi)
        = R^D_\gl f(\xi) + \rho(\gl)\,e_\gl(\xi)\,\bigl(2(e_\gl^w,f) + \gg f(v)\bigr),
            \qquad \xi\in\cG,
\end{equation}
where $R^D$ is the Dirichlet resolvent, $e_\gl$ is defined in equation~\eqref{eq1.14},
$e^w_\gl$ in equation~\eqref{eq4.9}, and $\rho(\gl)$ is as in formula~\eqref{eq5.3b}.
From equation~\eqref{eq5.9} we can read off the following result:

\begin{corollary}   \label{cor5.5}
For $\xi$, $\eta\in\cG$, $\gl>0$, the resolvent kernel $\rg_\gl$ of the general Brownian
motion $\Wg$ on $\cG$ is given  by
\begin{equation}    \label{eq5.10}
\begin{split}
    \rg_\gl(\xi,d\eta)
        = r^D_\gl(\xi,\eta)\,d\eta
            +\sum_{k,m=1}^n e_{\gl,k}&(\xi)\,2w_m\,\rho(\gl)\,e_{\gl,m}(\eta)\,d\eta\\
                & + \gg\,\rho(\gl)\,e_\gl(\xi)\,\epsilon_v(d\eta),
\end{split}
\end{equation}
with $r^D_\gl$ as in formula~\eqref{eq1.22}, and $\rho$ is defined in
equation~\eqref{eq5.3b}. Alternatively, $\rg_\gl$ can be written in the following form
\begin{subequations}    \label{eq5.11}
\begin{equation}    \label{eq5.11a}
\begin{split}
    \rg_\gl(\xi,d\eta)
        = r_\gl(\xi,\eta)\,d\eta
            +\sum_{k,m=1}^n e_{\gl,k}&(\xi)\,\Sg_{km}(\gl)
                \,\frac{1}{\sgl}\,e_{\gl,m}(\eta)\,d\eta\\
                & + \gg\,\rho(\gl)\,e_\gl(\xi)\,\epsilon_v(d\eta),
\end{split}
\end{equation}
where $r_\gl$ is defined in equation~\eqref{eq1.22a}, and
\begin{equation} \label{eq5.11b}
    \Sg_{km}(\gl)
        = 2\,\sgl\,\rho(\gl)\,w_m - \gd_{km}.
\end{equation}
\end{subequations}
\end{corollary}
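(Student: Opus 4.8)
The plan is to obtain both forms of the kernel by reading them off the closed-form resolvent expression~\eqref{eq5.9}, which itself arises from inserting~\eqref{eq4.12} into~\eqref{eq5.4}. The right-hand side of~\eqref{eq5.9} is already a sum of three terms, each manifestly of the form ``$f$ integrated against something in the second variable,'' so identifying $\rg_\gl(\xi,d\eta)$ reduces to matching these three pieces. I would emphasise from the start that $\rg_\gl(\xi,d\eta)$ is a \emph{kernel-valued measure}, not a genuine density: it has an absolutely continuous part with respect to the Lebesgue measure $d\eta$ on $\cG$ together with a singular part supported at the vertex, and keeping these two pieces apart is the only real bookkeeping point.

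First I would treat the three terms of~\eqref{eq5.9} in turn. The Dirichlet term $R^D_\gl f(\xi)$ contributes the absolutely continuous piece $r^D_\gl(\xi,\eta)\,d\eta$ by definition of the Dirichlet resolvent kernel~\eqref{eq1.22}. For the middle term $2\rho(\gl)\,e_\gl(\xi)\,(e^w_\gl,f)$ I would expand the pairing via the definition~\eqref{eq4.9} of $e^w_\gl$ as $(e^w_\gl,f)=\sum_{m=1}^n\int_{\cG} w_m\,e_{\gl,m}(\eta)\,\onel[m](\eta)\,f(\eta)\,d\eta$, and simultaneously resolve $e_\gl(\xi)=\sum_{k=1}^n\onel(\xi)\,e_{\gl,k}(\xi)$ over the edge carrying $\xi$; this produces the absolutely continuous double-sum term $\sum_{k,m} e_{\gl,k}(\xi)\,2w_m\,\rho(\gl)\,e_{\gl,m}(\eta)\,d\eta$. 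Finally, the term $\gg\,\rho(\gl)\,e_\gl(\xi)\,f(v)$ is handled by writing $f(v)=\int_{\cG} f(\eta)\,\epsilon_v(d\eta)$, yielding the singular contribution $\gg\,\rho(\gl)\,e_\gl(\xi)\,\epsilon_v(d\eta)$. Collecting these three pieces gives~\eqref{eq5.10}.

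For the alternative form~\eqref{eq5.11} I would substitute the splitting $r^D_\gl=r_\gl-r_{v,\gl}$ from~\eqref{eq1.22} into~\eqref{eq5.10}. The key observation is that, by the second line of~\eqref{eq1.22b}, the subtracted kernel $r_{v,\gl}$ is supported on pairs $\xi,\eta$ on a common edge and equals $\sum_{k,m} e_{\gl,k}(\xi)\,\tfrac{\gd_{km}}{\sgl}\,e_{\gl,m}(\eta)$; it thus has exactly the shape of the double-sum term already present in~\eqref{eq5.10}. Combining the two double sums and factoring out $1/\sgl$ leaves the coefficient $2\sgl\,\rho(\gl)\,w_m-\gd_{km}$, which is precisely $\Sg_{km}(\gl)$ as in~\eqref{eq5.11b}, while the Dirac part is untouched; this yields~\eqref{eq5.11a}.

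The computation is entirely elementary, so I expect no genuine obstacle: the substance was already spent in deriving~\eqref{eq5.9} by combining~\eqref{eq4.12} and~\eqref{eq5.4}. The only points demanding care are the clean separation of the Lebesgue-absolutely-continuous part from the Dirac measure $\epsilon_v(d\eta)$, and the edge bookkeeping via the indicators $\onel$, $\onel[m]$ that converts the single factors $e_\gl(\xi)$, $e_\gl(\eta)$ into the double sum over edge labels $k$, $m$; verifying the factor $\sgl$ in passing from~\eqref{eq5.10} to~\eqref{eq5.11b} is then immediate.
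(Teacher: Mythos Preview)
Your proposal is correct and matches the paper's approach exactly: the paper derives~\eqref{eq5.9} and then simply says ``From equation~\eqref{eq5.9} we can read off the following result,'' without spelling out the term-by-term identification or the passage from~\eqref{eq5.10} to~\eqref{eq5.11}. You have supplied precisely those details, and they are all correct.
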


In order to invert the Laplace transforms in equations~\eqref{eq5.10},
\eqref{eq5.11}, we define for $\gb$, $\gg>0$, the following function $g_{\gb,\gg}$
on $(0,+\infty)\times \R_+$:
\begin{equation}    \label{eq5.12}
    g_{\gb,\gg}(t,x)
        = \frac{1}{\gg^2}\,\frac{1}{\sqrt{2\pi}}\,\int_0^t \frac{s+\gg x}{(t-s)^{3/2}}\,
            \exp\Bigl(-\frac{(s+\gg x)^2}{2\gg^2(t-s)}\Bigr)\,e^{-\gb s/\gg}\,ds,
\end{equation}
with $(t,x)\in(0,+\infty)\times\R_+$. The heat kernel $g_{\gb,\gg}$ is discussed in
more detail in appendix~\ref{0_app_LTHK} of~\cite{BMMG0}. In particular, it is
outlined there that the limits of $g_{\gb,\gg}$ as $\gb\downarrow 0$, and
$\gg\downarrow 0$, yield the kernels $g_{\gb,0}$ (equation~\eqref{eq3.10}) and
$g_{0,\gg}$ (equation~\eqref{eq4.15}), respectively. Moreover, it is proved there
that the Laplace transform of $g_{\gb,\gg}(\cdot,x)$, $x\ge 0$, is given by
\begin{equation}    \label{eq5.13}
    \rho(\gl)\,e^{-\sgl x},\qquad \gl>0,
\end{equation}
where $\rho$ is defined in~\eqref{eq5.3b}. Hence we get the

\begin{corollary}   \label{cor5.6}
For $\xi$, $\eta\in\cG$, $t>0$, the transition kernel of the general Brownian
motion $\Wg$ on $\cG$ is given  by
\begin{equation}    \label{eq5.14}
\begin{split}
    \pg(t,\xi,d\eta)
        &= p^D(t,\xi,\eta)\,d\eta\\
        &\hspace{2em} +\sum_{k,m=1}^n \onel(\xi)\,2w_m\,g_{\gb,\gg}\bigl(t,d_v(\xi,\eta)\bigr)\,
                                \onel[m](\eta)\,d\eta\\
        &\hspace{2em} + \gg\,g_{\gb,\gg}\bigl(t,d(\xi,v)\bigr)\,\gep_v(d\eta),
\end{split}
\end{equation}
which alternatively can be written as
\begin{equation}    \label{eq5.15}
\begin{split}
    \pg(t,\xi,d\eta)
        &= p(t,\xi,\eta)\,d\eta\\
        &\hspace{2em} +\sum_{k,m=1}^n \onel(\xi)\,\Bigl(2w_m\,g_{\gb,\gg}\bigl(t,d_v(\xi,\eta)\bigr)\\
        &\hspace{10em} -\gd_{km}\,g\bigl(t,d_v(\xi,\eta)\bigr)\Bigl)\,
                                \onel[m](\eta)\,d\eta\\
        &\hspace{2em} + \gg\,g_{\gb,\gg}\bigl(t,d(\xi,v)\bigr)\,\gep_v(d\eta).
\end{split}
\end{equation}
\end{corollary}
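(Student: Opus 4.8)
The plan is to obtain both formulas by inverting, term by term, the Laplace transforms appearing in the two forms \eqref{eq5.10} and \eqref{eq5.11} of the resolvent kernel $\rg_\gl$ from corollary~\ref{cor5.5}. The only genuinely analytic ingredient is the Laplace transform identity \eqref{eq5.13}, i.e.\ that the function $g_{\gb,\gg}(\,\cdot\,,x)$ defined by \eqref{eq5.12} has Laplace transform $\rho(\gl)\,e^{-\sgl x}$; this is established in appendix~\ref{0_app_LTHK} of~\cite{BMMG0} and will be assumed. Together with the facts that $r^D_\gl$ is the Laplace transform of $p^D$ (cf.\ the sentence following \eqref{eq1.22b}) and that $\frac{1}{\sgl}\,e^{-\sgl x}$ is the Laplace transform of the Gau\ss-kernel $g(\,\cdot\,,x)$ (cf.\ the identity displayed after \eqref{eq4.16}), everything reduces to algebra and bookkeeping. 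Linearity of the Laplace transform and its uniqueness on the class of functions at hand justify the term-by-term inversion.

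First I would establish \eqref{eq5.14} from \eqref{eq5.10}. The leading term $r^D_\gl(\xi,\eta)$ inverts to $p^D(t,\xi,\eta)$ at once. For the second term I would use the definition \eqref{eq1.14} of $e_\gl$ together with the definition \eqref{eq1.15} of $d_v$ to collapse the product of the two exponential factors:
\begin{equation*}
    e_{\gl,k}(\xi)\,e_{\gl,m}(\eta)
        = \onel(\xi)\,e^{-\sgl(d(\xi,v)+d(\eta,v))}\,\onel[m](\eta)
        = \onel(\xi)\,e^{-\sgl\,d_v(\xi,\eta)}\,\onel[m](\eta).
\end{equation*}
Thus each summand of the second term of \eqref{eq5.10} equals $2w_m\,\onel(\xi)\,\onel[m](\eta)$ times $\rho(\gl)\,e^{-\sgl\,d_v(\xi,\eta)}$, which by \eqref{eq5.13} (with $x=d_v(\xi,\eta)$) inverts to $g_{\gb,\gg}\bigl(t,d_v(\xi,\eta)\bigr)$, giving the second line of \eqref{eq5.14}. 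Finally, the third term $\gg\,\rho(\gl)\,e_\gl(\xi)=\gg\,\rho(\gl)\,e^{-\sgl\,d(\xi,v)}$ inverts, again by \eqref{eq5.13} with $x=d(\xi,v)$, to $\gg\,g_{\gb,\gg}\bigl(t,d(\xi,v)\bigr)$, which carries the atom $\gep_v(d\eta)$ unchanged; this is the last line of \eqref{eq5.14}.

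Next I would derive \eqref{eq5.15} from the alternative form \eqref{eq5.11}. Here the leading term $r_\gl(\xi,\eta)$ inverts to $p(t,\xi,\eta)$, and the third term is identical to the one just treated. For the middle term I would substitute the scattering matrix \eqref{eq5.11b}, namely $\Sg_{km}(\gl)=2\sgl\,\rho(\gl)\,w_m-\gd_{km}$, and split it into two pieces. The piece proportional to $w_m$ gives $\sum_{k,m}e_{\gl,k}(\xi)\,2\sgl\,\rho(\gl)\,w_m\,\frac{1}{\sgl}\,e_{\gl,m}(\eta)=\sum_{k,m}e_{\gl,k}(\xi)\,2w_m\,\rho(\gl)\,e_{\gl,m}(\eta)$, which is exactly the second term of \eqref{eq5.10} and hence inverts to $\sum_{k,m}\onel(\xi)\,2w_m\,g_{\gb,\gg}\bigl(t,d_v(\xi,\eta)\bigr)\,\onel[m](\eta)$. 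The piece proportional to $-\gd_{km}$ gives $-\sum_{k,m}e_{\gl,k}(\xi)\,\gd_{km}\,\frac{1}{\sgl}\,e_{\gl,m}(\eta)=-\sum_k\onel(\xi)\,\frac{1}{\sgl}\,e^{-\sgl\,d_v(\xi,\eta)}\,\onel(\eta)$, which inverts to $-\sum_k\onel(\xi)\,g\bigl(t,d_v(\xi,\eta)\bigr)\,\onel(\eta)$; combining the two pieces reproduces the second and third lines of \eqref{eq5.15}.

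I do not expect a serious obstacle here: once \eqref{eq5.13} is granted, the proof is a direct computation. The only points requiring care are keeping the Lebesgue-measure part $d\eta$ cleanly separated from the atomic part $\gep_v(d\eta)$ throughout, and recognizing---via \eqref{eq1.14} and \eqref{eq1.15}---that every product of two $e_\gl$-factors collapses to a single exponential in the via-vertex distance $d_v$, so that the pre-images $g_{\gb,\gg}$ and $g$ appear with exactly the right argument. The real analytic work, which is the verification that the integral \eqref{eq5.12} has the Laplace transform \eqref{eq5.13}, lives in~\cite{BMMG0} and is imported wholesale.
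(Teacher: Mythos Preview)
Your proposal is correct and follows exactly the paper's approach: the paper simply states that, having defined $g_{\gb,\gg}$ by \eqref{eq5.12} and recorded its Laplace transform \eqref{eq5.13}, the corollary follows by inverting the Laplace transforms in \eqref{eq5.10} and \eqref{eq5.11} term by term. Your write-up spells out in detail precisely what the paper leaves implicit in the word ``Hence''.
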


\providecommand{\bysame}{\leavevmode\hbox to3em{\hrulefill}\thinspace}
\providecommand{\MR}{\relax\ifhmode\unskip\space\fi MR }
\providecommand{\MRhref}[2]{%
  \href{http://www.ams.org/mathscinet-getitem?mr=#1}{#2}
}
\providecommand{\href}[2]{#2}

\end{document}